\documentclass[12pt]{amsart}
\usepackage{amsfonts}
\usepackage{amsfonts,latexsym,rawfonts,amsmath,amssymb,amsthm}
\usepackage[backend=biber]{biblatex}
\usepackage[plainpages=false]{hyperref}
\usepackage{graphicx}
\usepackage[title]{appendix}%

\numberwithin{equation}{section}

\RequirePackage{color}
\theoremstyle{plain}

\newtheorem{theorem}{Theorem}[section]

\newtheorem{corollary}{Corollary}[section]

\newtheorem{lemma}[theorem]{Lemma}
\newtheorem{proposition}[theorem]{Proposition}
\newtheorem{remark}[theorem]{Remark}

\newcommand{\beq}{\begin{equation}}
\newcommand{\eeq}{\end{equation}}
\newcommand{\beqs}{\begin{eqnarray*}}
\newcommand{\eeqs}{\end{eqnarray*}}
\newcommand{\beqn}{\begin{eqnarray}}
\newcommand{\eeqn}{\end{eqnarray}}
\newcommand{\beqa}{\begin{array}}
\newcommand{\eeqa}{\end{array}}

\def\p{\partial }

\def\ve{\varepsilon}
\def\n{\nabla}
\def\a{\alpha}
\def\b{\beta}
\def\g{\gamma}

\def\<{\langle}
\def\>{\rangle}
\def\Ric{\operatorname{Ric}}

\begin{document}
\title[Geometric inequalities for $p$-capacitary functions]{Mass--\(p\)-Capacity Inequalities in Asymptotically Flat Half-Spaces}

	\author{Chao Xia}
	\address{School of Mathematical Sciences\\
		Xiamen University\\
		361005, Xiamen, P.R. China}
	\email{chaoxia@xmu.edu.cn}
	\author{Jiabin Yin}
	\address{School of Mathematics and Statistics\\ Xingyang Normal University\\  541004, Xingyang, P.R. China}
	\email{jiabinyin@126.com}
	\author{Xingjian Zhou}
	\address{School of Mathematical Sciences\\
		Xiamen University\\
		361005, Xiamen, P.R. China}
	\email{zhouxingjian@stu.xmu.edu.cn}

\keywords{$p$-Laplace equations; asymptotically flat
half-spaces; monotone quantities; geometric inequalities.}

\subjclass[2010]{Primary 35J96, 52A39; Secondary 53A05.}

%\thanks{This research was supported by funds from Hubei Provincial Department of Education
%Key Projects D20181003.}
%\thanks{$\ast$ Corresponding author}

\begin{abstract}
In this paper, we %establish the existence, regularity, and asymptotic behavior of a $p$-capacitary potential in an $3$-dimensional asymptotically flat half space.  Then, based on these properties of the potential, we 
establish general monotone quantities and sharp mass-capacity inequalities related to $p$-capacitary functions in $3$-dimensional asymptotically flat half-spaces of simple topology with nonnegative scalar curvature and nonnegative boundary mean curvature. These inequalities attain equality on  a Schwarzschild half-space outside a rotationally symmetric half sphere.
\end{abstract}

\maketitle

\baselineskip18pt

\parskip3pt

\section{Introduction}

{

%General relativity provides a fundamental link between gravitation and geometry. In the time-symmetric setting, an isolated gravitating system is modeled by a complete Riemannian manifold whose geometry at infinity reflects the absence of external matter. One models the exterior region of a single isolated body by a complete Riemannian manifold $(M,g)$ which is asymptotically flat. %: there exists a compact set $K\subset M$ such that $M\setminus K$ is diffeomorphic to $\mathbb{R}^3\setminus\Omega$ for some compact $\Omega\subset\mathbb{R}^3$, and in the associated chart at infinity the metric has the form
%$g_{ij}=\delta_{ij}+\sigma_{ij},$
%where $\sigma_{ij}$ and its derivatives up to second order satisfy suitable decay estimates. An appropriate notion of total mass for asymptotically flat manifolds was introduced by Arnowitt--Deser--Misner \cite{Arnowitt--Deser--Misner--1959}. 

%A fundamental question is whether nonnegative scalar curvature implies nonnegative total mass, and whether the zero-mass case forces Euclidean geometry. In dimension three this was proved by Schoen--Yau \cite{Schoen--Yau--1979} via minimal surface methods, yielding both positivity and rigidity.

%When the manifold has compact boundary, modeling an isolated body with horizon, one seeks sharper lower bounds for the mass. Motivated by cosmic censorship, Penrose \cite{Penrose--1973} proposed what is now called 
The Riemannian Penrose inequality says that,  for an asymptotically flat $3$-manifold $(M,g)$ with outermost minimal boundary and nonnegative scalar curvature,
\begin{equation}\label{RPI}
\mathfrak{m}_{ADM}\ge \sqrt{\frac{|\partial M|}{16\pi}},
\end{equation}
with equality if and only if $(M,g)$ is a spatial Schwarzschild manifold. The connected-boundary case was first proved by Huisken--Ilmanen \cite{Huisken--Ilmanen--2001} using weak level-set formulation of inverse mean curvature flow and Geroch-type monotonicity. The general case and the higher dimensional inequality in dimensions less than eight were subsequently proven by Bray \cite{Bray--2001} and  Bray-Lee \cite{Bray--Lee--2009} via conformal flow method. 
On the other hand, Bray \cite{Bray--2001} and Bray-Miao \cite{Bray--Miao--2008} established the following mass-capacity inequality  for an asymptotically flat manifold with  nonnegative scalar curvature and connected boundary, \begin{eqnarray}\label{mass-to-capacity}
		\frac{\mathfrak{m}_{ADM}}{{\rm Cap}(\p M)}\ge 1-\left(\frac{1}{16\pi}\int_{\p M}H^2\right)^{\frac12}.
	\end{eqnarray}

Recently, linear and nonlinear potential theory has been sucessfully applied to geometrical problem on $3$-manifolds of nonnegative scalar curvature, see for example \cite{Stern--2022, Bray--Kazaras--Khuri--Stern--2022, Agostiniani--Mazzieri--Oronzio--2022, Munteanu--Wang--2023, Munteanu--Wang--2026, Chan--Chu--Lee--Tsang--2024}, and to a resolution of $3$-dimensional stable Berstein problem \cite{Chodosh--Li--2024}. Regarding Riemannian Penrose inequality, Agostiniani-Mantegazza-Mazzieri-Oronzio \cite{Agostiniani--Mantegazza--Mazzieri--Oronzio--2023} provided a new proof for connected boundary by establishing monotonicity for $p$-capacitary functions in nonlinear potential theory. Also, Miao \cite{Miao--2025} found several monotone quantities associated to capacitary functions, so as to establish a family of sharp geometric inequalities, which lead to new proof of \eqref{mass-to-capacity} on $3$-manifolds with simple topology, see also \cite{Hirsch--Miao--Tam--2024}. Subsequently, we have established in \cite{ Xia--Yin--Zhou--2024} (See also \cite{Mazurowski--Yao--2025,Mazurowski--Yao--2026}) very general monotone quantities and sharp geometric inequalities related to $p$-capacitary functions, for which rigidity is modelled on Schwarzschild manifolds outside rotationally symmetric spheres. %Related monotonicity ideas also lead to mass--capacity inequalities and further geometric estimates; see, for example, \cite{ Agostiniani--Mazzieri--Oronzio--2024,Chan--Chu--Lee--Tsang--2024,Hirsch--Miao--Tam--2024,miao2023implications,  mazurowski2024mass, Munteanu--Wang--2023,Oronzio--2025}.

%More broadly, level-set monotonicity originates in work of Colding \cite{Colding--2012} and Colding--Minicozzi \cite{Colding--Minicozzi--2013, Colding--Minicozzi--2014} on Green's functions and harmonic functions under nonnegative Ricci curvature, and was subsequently used to derive rigidity and sharp inequalities under Ricci assumptions, for instance in \cite{Agostiniani--Fogagnolo--Mazzieri--2020,Agostiniani--Mazzieri--2020, Agostiniani--Mazzieri--2017,Benatti--Fogagnolo--Mazzieri--2024--AnalPDE,  Benatti--Fogagnolo--Mazzieri--2024--MathAnn}. 

%On the other hand, by combining level-set ideas with the Bochner formula, the Gauss equation, and the Gauss--Bonnet theorem, Stern \cite{Stern--2022} and Bray--Kazaras--Khuri--Stern \cite{Bray--Kazaras--Khuri--Stern--2022} showed in dimension three that one can obtain rigidity and mass-type information under the weaker assumption of nonnegative scalar curvature. These scalar-curvature based approaches have been further developed using harmonic and nonlinear potential theory; see, for instance, \cite{Agostiniani--Mazzieri--Oronzio--2024, Benatti--Fogagnolo--Mazzieri--2023, Benatti--Fogagnolo--Mazzieri--2025}. These developments provide a flexible framework for geometric inequalities in asymptotically flat manifolds.

{In this paper, we aim to develop a similar nonlinear potential-theoretic
approach to geometric problems on asymptotically flat half-spaces with
nonnegative scalar curvature and nonnegative boundary mean curvature, and to
extend our previous result \cite{Xia--Yin--Zhou--2024} to this setting.
% This extension is not merely formal, as the presence of a noncompact boundary introduces several new analytic difficulties. In particular, one has to establish the precise asymptotic behavior of the function $u$ appearing in Theorem~\ref{lem:2.2}, and to  establish suitable Sobolev inequalities adapted to manifolds with noncompact boundary.
}
% In this paper, we aim at developing a similar nonlinear potential theoretic approach for geometric problems in asymptotically flat {half-spaces} with nonnegative scalar curvature and nonnegative boundary mean curvature, and extending our previous result \cite{Xia--Yin--Zhou--2024} to this setting. {\color{red}This extension is not trivial and requires a new ingredient.  We need to resolve the analytic difficulties arising from non-compact boundaries, especially the asymptotic behavior of $u$ in Theorem \ref{lem:2.2} and Sobolev inequalities on manifolds with non-compact boundaries, among others.}
Let $(M,g)$ be a complete $3$-dimensional Riemannian manifold with non-compact boundary. We say that $(M,g)$ is a one-end asymptotically flat half-space if there exists a compact subset $\Omega\subset M$ such that $M\setminus\Omega$ is diffeomorphic to $\{x\in\mathbb{R}^3_+: |x|\ge 1\}$, where $\mathbb{R}^3_+=\{x\in\mathbb{R}^3:\ x_3\ge0\}$, and in the corresponding chart at infinity,
\[
|g_{ij}-\bar g_{ij}|+\sum_{k=1}^3 |x|\,|\partial_k g_{ij}|+\sum_{k,l=1}^3 |x|^2\,|\partial_k\partial_l g_{ij}|
=O(|x|^{-\tau}),\qquad \tau>\frac12,
\]
together with
\[
\Ric_{ij}\ge -O(|x|^{-2})
\qquad\text{as }|x|\to\infty,
\]
where $\bar g$ is the Euclidean metric on $\mathbb{R}^3_+$. 

 Almaraz--Barbosa--de Lima \cite{Almaraz--Barbosa--deLima--2016} has introduced the following mass-type quantity for an asymptotically flat half-space $M$
\begin{equation*}
\mathfrak{m}_{ABL}
:= \frac{1}{16\pi}\lim_{r\to\infty}
\left\{
\int_{S_r(0)\cap\mathbb{R}^3_+}
\bigl(\partial_j g_{ij}-\partial_i g_{jj}\bigr)\frac{x^i}{|x|}\,dA_{\bar g}
+\int_{S_r(0)\cap(\mathbb{R}^2\times\{0\})}\frac{x^i}{|x|}g_{i3}\,ds_{\bar g}
\right\}.
\end{equation*}
$\mathfrak{m}_{ABL}$ is well-defined, provided that the scalar curvature $R$ and the boundary mean curvature $H_{\p M}$ are both integrable.

A special asymptotically flat half-space is the Schwarzschild half-space of $\mathfrak{m}_{ABL}=m$, given by
\[
(M, g_m)=\left(\mathbb{R}^3_+\setminus B^3_{+, m}(0),(1+{\frac{m}{|x|}})^4\bar g\right),
\]
where $B^3_{+,r}(0)=\{x\in\mathbb{R}^3_+:\ |x|\le r\}$.

Almaraz--Barbosa--de Lima \cite{Almaraz--Barbosa--deLima--2016} proved the positive-mass theorem, saying that $\mathfrak{m}_{ABL}\ge0$ provided $R\ge0$ and $H_{\p M}\ge0$, with equality holding if and only if $(M,g)$ is isometric to the Euclidean half-space. Further aspects of mass and asymptotic geometry in asymptotically flat manifolds were studied by Almaraz--de Lima \cite{Almaraz--deLima--2023}.

Given a set $U\subset M$, we use
$\tilde{\p}U=\overline{\p U\setminus\p M} \hbox{ and }\hat{\p}U=\p U\cap\p M$
to denote the interior boundary and the exterior boundary of $U$ respectively. A compact connected hypersurface $\Sigma$ with non-empty boundary is called a {free boundary surface} if $\Sigma\cap\partial M=\partial\Sigma$ and $\Sigma$ meets $\partial M$ orthogonally. An unbounded,
connected subset $M' \subset M$ is called an exterior region if $\tilde{\p}M'$ consists of
closed and free boundary minimal surfaces and if $M'$ contains no other
closed or free boundary minimal surfaces.
Riemannian Penrose inequality for $\mathfrak{m}_{ABL}$ has been established by Koerber \cite{Koerber--2023} for $n=3$ and connected horizon boundary via free boundary weak inverse mean curvature flow, and by Eichmair--Koerber \cite{Eichmair--Koerber--2023} using a doubling procedure in dimensions $3\le n\le7$. It says that for an asymptotically flat half-space $M$ such that $R\ge 0$ and $H_{\p M}\ge 0$, and $M'\subset M$ an exterior region such that $\Sigma$ is a free boundary
component of $\tilde{\p }M'$, it holds that
\[
\mathfrak{m}_{ABL} \ge \left(\frac12\right)^{\frac n{n-1}} \left( \frac{|\Sigma|}{\omega_{n-1}} \right)^{\frac{n-2}{n-1}},
\]
with equality if and only if $(M',g)$ is isometric to the Schwarzschild half-space. 
Following Bray-Miao \cite{Bray--Miao--2008}, mass-capacity inequalities for $\mathfrak{m}_{ABL}$ have been recently investigated by Silva \cite{Silva--2025} and the second and third authors \cite{yin2026sharp}.

From now on, we do not assume $M'$ is an exterior region, and just let $M'$ be the interior of the unbounded component of $M\setminus\Sigma$, where $\Sigma$ is a free boundary surface. Then $
\partial M'=\hat\partial M'\cup\Sigma.$
For $1<p<3$, we consider the $p$-capacity of $\Sigma$ in $M'$,
\[
\mathrm{Cap}_{p}(\Sigma, M')
:=\inf\left\{
\int_{M'}|\nabla \varphi|^p
\;\middle|\;
\varphi\in\mathcal A
\right\},
\qquad
\mathcal A:=\{\varphi=f+v:\ v\in W^{1,p}_{0,\Sigma}(M')\},
\]
where $f-1\in C^\infty_c(\overline{M'})$ and $f=0$ near $\Sigma$ (see \cite[p.~480]{Salsa--2016} for $W^{1,p}_{0,\Sigma}(M')$). The minimizer is characterized by the $p$-capacitary potential $u$ that solves: (see section 2 for details)
\begin{equation}\label{p-H}
\left\{
\begin{array}{ll}
\Delta_p u=0 & \text{in } M',\\
u=0 & \text{on } \Sigma,\\
\langle\nabla u,\mu\rangle=0 & \text{on } \hat\partial M',\\
u(x)\to1 & \text{as } |x|\to\infty,
\end{array}
\right.
\end{equation}
where $\Delta_pu=div(|\nabla u|^{p-2}\nabla u)$. In particular,
\[
\operatorname{Cap}_p(\Sigma,M')
=\int_{M'}|\nabla u|^p dV_{M'}
=\int_{\{u=t\}}|\nabla u|^{p-1}d\sigma
\]
for regular level sets $\{u=t\}$.

We set
\[
a:=\frac{3-p}{p-1}>0,
\qquad
\mathfrak{c}_{p}:=\left(\frac{\operatorname{Cap}_{p}}{2\pi}\right)^{\frac{1}{p-1}}.
\]
In Section $2$, we get the following  asymptotic expansion of $u$:
\begin{equation}\label{equ:asymptotic u}
u=1-\frac{\mathfrak{c}_{p}}{a}r^{-a}+O_2(r^{-a-\tilde\tau})
\qquad\text{as }r=|x|\to\infty,
\end{equation}
for any $0<\tilde\tau<\min\{\tau,1\}$.

Given $m\in\mathbb{R}$ and $r_0\ge |m|>0$, we denote by $(\mathcal{M}^{+,3}_{m,r_0},g_m)$ the spatial Schwarzschild half-space of mass $m$ outside a half-ball,
\begin{equation}\label{schwarzschild}
\mathcal{M}^{+,3}_{m,r_0}:=\mathbb{R}^3_+\setminus B^3_{+,r_0}(0),
\qquad
g_m=\left(1+\frac{m}{|x|}\right)^4\bar g.
\end{equation}
 Note that $\partial B^3_{+,r_0}=S^2_{+,r_0}\cup S_{r_0}$, where $S^2_{+,r_0}=\{x\in\mathbb{R}^3_+:\ |x|=r_0\}$ and $S_{r_0}=\{x\in\partial\mathbb{R}^3_+:\ |x|=r_0\}$. If $m>0$ and $r_0=m$, then $S_{r_0}$ is totally geodesic and $S^2_{+,r_0}$ is minimal.

The main result of this paper is the following sharp geometric inequalities between $\mathfrak{m}_{ABL}$ and several integral quantities involving the $p$-capicitary potential.

\begin{theorem}\label{Thm:1.02}
Let $(M',g)$ be a $3$-dimensional, complete, one-end asymptotically flat half space with boundary $\partial M'=\hat\partial M'\cup\Sigma$ that has nonnegative scalar curvature and $H_{\hat\partial M'}\geq0$, where $\Sigma$ is a free boundary. Assume that $M'$ is simply connected and $\Sigma$ is connected. Let $p\in(1,3)$ and $u$ be the weak solution to \eqref{p-H}.
{For any $k\in(-1,1)$, denote
\[
m:= \operatorname{sgn}(k)\bigl(I_a(k)\mathfrak{c}_{p}\bigr)^{\frac{1}{a}},
\qquad
r_{0}:=\frac{m}{k}
=|k|^{-1}\bigl(I_a(k)\mathfrak{c}_{p}\bigr)^{\frac{1}{a}},
\]
where $a:=\frac{3-p}{p-1}$ and
\[
I_{a}(k):= \int_{0}^{|k|} s^{a-1}(1+\operatorname{sgn}(k)s)^{-2a}\,ds.
\]}
Then the following inequalities hold:
\begin{equation}\label{geom-ineq-10}
\begin{aligned}
& 2 \pi - \frac{(1+k)^{2}}{(1-k)^{2}} (\eta(r_{0}))^{2} \int_{\Sigma} |\nabla u|^2
\\ \ge\;& \frac{(1+k)^{2}r_{0}}{\eta(r_{0})} \frac{(1-a\eta(r_{0}))}{2m}
\left\{
2 \pi \frac{(1-k)^{2}}{(1+k)^{2}} - \int_{\Sigma} \left(\frac{H}{2}\right)^{2}
+ \int_{\Sigma} \left( \frac{H}{2}-\eta(r_{0})|\nabla u| \right)^{2}
\right\},
\end{aligned}
\end{equation}
\begin{equation}\label{geom-ineq-20}
\begin{aligned}
& 2 \pi -\frac{(1+k)^{2}}{(1-k)^{2}} (\eta(r_{0}))^{2}  \int_{\Sigma} |\nabla u|^2
\le 4\pi a\left( \mathfrak{m}_{ABL}-m \right) \frac{(1-a\eta(r_{0}))}{m} .
\end{aligned}
\end{equation}
Here $\eta(t)$ is defined by \eqref{equ:eta} below and
\[
\eta(r_{0})=\mathfrak{c}_{p}^{-1}r_{0}^{a}(1+k)^{2a-1}(1-k).
\]
Moreover, equality in each of the above inequalities holds for some $k$ if and only if $(M',g)$ is isometric to either the Schwarzschild half space of mass $m$ outside a rotationally symmetric half ball, $(\mathcal{M}^{+,3}_{m,r_0},g_m)$ with $m=r_0k\ (k\neq0)$, or the Euclidean half space outside a rotationally symmetric half ball $(\mathbb{R}^3_+\setminus B^3_{+,r_0}(0),\delta)$.
\end{theorem}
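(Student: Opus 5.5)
The plan is to run the monotonicity machinery of \cite{Xia--Yin--Zhou--2024} on the level sets $\Sigma_t=\{u=t\}$, which here are free boundary surfaces meeting $\hat\partial M'$ orthogonally. We compare $u$ with the model $p$-capacitary potential on $(\mathcal{M}^{+,3}_{m,r_0},g_m)$. On this model the potential is a function of $|x|$ only, and we write its level-set data through an ODE profile $\eta(t)$; this is the function in \eqref{equ:eta}. The parameters $m,r_0$ are chosen so that the model and $(M',g)$ have the same $p$-capacity of the inner boundary, which is exactly the normalization $r_0^a=\mathfrak c_p/\bigl(|k|^{-a}I_a(k)^{-1}\bigr)$ encoded in $I_a(k)$. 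We then define two functions of $t\in[0,1)$:
\begin{equation*}
F(t)=2\pi-\frac{(1+k)^2}{(1-k)^2}\,\eta(t)^2\!\int_{\Sigma_t}|\nabla u|^2,
\end{equation*}
and a second quantity $G(t)$ given by the $H$-weighted expression on the right of \eqref{geom-ineq-10}, suitably rescaled. Both are built so that they are constant on the model.

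The key steps, in order, are as follows.

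First, compute $F'(t)$ and $G'(t)$ from the $p$-Laplace equation written on regular level sets. Using $\Delta_p u=0$, the derivative of $\int_{\Sigma_t}|\nabla u|^{q}$ is expressed through $H$ and $\nabla_\nu|\nabla u|$.

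Second, apply the Bochner formula for $p$-harmonic functions together with the traced Gauss equation. This produces $R$, $|\mathring h|^2$ and the intrinsic curvature $K_{\Sigma_t}$ of the level sets.

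Third, integrate over $\Sigma_t$. Gauss--Bonnet for a surface with boundary gives $\int_{\Sigma_t}K+\int_{\partial\Sigma_t}\kappa_g=2\pi\chi(\Sigma_t)\le 2\pi$. The topological bound holds because $M'$ is simply connected and $\Sigma$ is connected, so each regular $\Sigma_t$ is connected with $\chi\le 1$ (a disk). The free boundary condition $\langle\nabla u,\mu\rangle=0$ identifies the geodesic curvature term: $\kappa_g$ equals $h_{\hat\partial M'}(\nu_t,\nu_t)$-type data, which combines with the boundary term arising from integrating $\mathrm{div}(|\nabla u|^{p-2}\nabla|\nabla u|)$ by parts to leave $H_{\hat\partial M'}$ along $\partial\Sigma_t$. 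This term is nonnegative by hypothesis. Together with $R\ge0$, this yields $F'\ge0$, or more precisely $(\text{weight}\cdot F)'\ge$ a sum of squares, and similarly for $G$.

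Fourth, handle critical values. We regularize as in \cite{Agostiniani--Mantegazza--Mazzieri--Oronzio--2023}, working with $|\nabla u|^2+\varepsilon$ and passing to the limit. We use $C^{1,\beta}$ regularity up to the Neumann boundary and the fact that critical values have measure zero in a suitable sense.

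The two inequalities then come from evaluating the endpoints. Comparing $t=0$ with the monotone combination gives \eqref{geom-ineq-10}. Here the completed square $(H/2-\eta|\nabla u|)^2$ is what remains when the $\int H|\nabla u|$ cross term is rewritten. Letting $t\to1$ and inserting the asymptotic expansion \eqref{equ:asymptotic u} gives \eqref{geom-ineq-20}. On $\Sigma_t\approx S^2_{+,r}$, the limit of $\frac{r}{\cdot}\bigl(2\pi-\frac{1}{4}\int H^2\bigr)$-type expressions recovers $4\pi\,\mathfrak m_{ABL}$, together with the half-sphere boundary contribution involving $g_{i3}$. This identification requires the $O_2(r^{-a-\tilde\tau})$ error and $\tau>1/2$.

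For rigidity, equality forces $R=0$, $\mathring h=0$, and $|\nabla u|$ constant on the level sets, with $H_{\hat\partial M'}=0$ along the curves $\partial\Sigma_t$ and each $\Sigma_t$ a disk. The metric is then a warped product $dr^2+\phi(r)^2 g_{S^2_+}$, and the ODE forces either Schwarzschild or Euclidean.

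I expect the main obstacle to be the $t\to1$ limit in the third and fourth steps. One must show that the Hawking-type quantity on the free boundary level sets converges to $\mathfrak m_{ABL}$, including the boundary integral term. This requires controlling the second fundamental form of $\Sigma_t$ and the geodesic curvature of $\partial\Sigma_t$ in $\hat\partial M'$ to order $o(r^{-1})$. My first attempt would be to integrate the identity $\mathrm{div}$ of the mass flux against $\nabla u$ over $\{u\le t\}$, in the style of Miao, so that only the asymptotics of $u$ are needed.
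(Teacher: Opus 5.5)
Your outline (level-set flow, free-boundary Gauss--Bonnet, regularization, asymptotics) follows the machinery of \cite{Xia--Yin--Zhou--2024}. However, the monotone quantity you propose does not work, and that quantity is the heart of the proof. Your $F(t)=2\pi-\frac{(1+k)^2}{(1-k)^2}\eta(t)^2\int_{\Sigma_t}|\nabla u|^2$ has no $H$-term, and there is no mechanism making such a quantity monotone. Along the level sets one has $\frac{d}{ds}\int_{\{u=s\}}|\nabla u|^2=-a\int_{\{u=s\}}H|\nabla u|$, so $F'$ contains $\int_{\Sigma_t}H|\nabla u|$, which has no sign. The only place where $R\ge0$, $H_{\hat\partial M'}\ge0$ and $\chi(\Sigma_t)\le1$ enter (evolution of $H$, Gauss equation, Gauss--Bonnet) is the derivative of $\int_{\Sigma_t}H|\nabla u|$. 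Equivalently, in the linear system for $(\alpha,\beta,\gamma)$ behind \eqref{ode-solution1}, $\alpha\equiv0$ forces $\beta\equiv0$. (Also, $\frac{(1+k)^2}{(1-k)^2}\eta^2$ equals the model-invariant weight only on $\Sigma$; for general $t$ that weight is $\mathfrak c_p^{-2}t^{2a}(1+\frac mt)^{4a}$.)

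Nor can an $H$-free quantity produce $\mathfrak m_{ABL}$ at infinity. The expansion of $u$ determines $\int_{\Sigma_t}|\nabla u|^2=2\pi\mathfrak c_p^2t^{-2a}(1+O(t^{-\tilde\tau}))$ only to relative order $t^{-\tilde\tau}$ with $\tilde\tau<1$, whereas the mass sits at relative order $t^{-1}$. The mass can only be extracted from the Hawking-type term $\frac t4\bigl(8\pi-\int_{\Sigma_t}H^2\bigr)$, which you invoke for \eqref{geom-ineq-20} but which your $F$ does not contain. So the split ``$F$ gives \eqref{geom-ineq-20} as $t\to1$, $G$ gives \eqref{geom-ineq-10} at $t=0$'' does not close.

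The missing idea is to couple the two integrals in a single quantity. Take $F=2\pi\gamma+\alpha\int_{\Sigma_t}H|\nabla u|+\beta\int_{\Sigma_t}|\nabla u|^2$, with $\Sigma_t=\{u=f(t)\}$, $t\in[r_0,\infty)$, and $(\alpha,\beta,\gamma)$ the two-parameter family \eqref{ode-solution1}. This family solves $\alpha'=(2a+1)\eta f'\alpha+af'\beta$, $\beta'=-(2a+1)\eta^2f'\alpha$, $\gamma'=-f'\alpha$. That system is exactly what makes all unsigned terms cancel after completing the square $(\frac H2-\eta|\nabla u|)^2$, leaving $F'=\gamma'\{\cdots\}\le0$ provided $\alpha\ge0$ on all of $[r_0,\infty)$, i.e.\ under \eqref{alpha>0-assumpt}.

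Both inequalities then come from $F(r_0)\ge\lim_{t\to\infty}F(t)$; each compares $\Sigma$ with infinity, and neither is a one-endpoint evaluation.
\begin{itemize}
\item For \eqref{geom-ineq-10}, take $C_2=0$, $C_1=-1$. Then $\alpha/\eta\to0$ and $\lim F=0$, and rewriting $F(r_0)$ produces the completed square on the right-hand side.
\item For \eqref{geom-ineq-20}, take $C_2=\mathfrak c_p^{-1}$, $C_1=\frac{2m\eta(r_0)}{1-a\eta(r_0)}$. This gives $\alpha(r_0)=0$, which is why no $H$ appears on $\Sigma$. At infinity $\alpha(t)/\eta(t)-t(1+\frac mt)^2\to0$, so the Hawking-type term yields $\lim F\ge-8\pi(\mathfrak m_{ABL}-m)$.
\end{itemize}
Both choices satisfy \eqref{alpha>0-assumpt} precisely because $\frac{1-a\eta(r_0)}{m}>0$, and this is also the sign used when dividing at the end.

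At infinity you only need the one-sided bound $\lim\frac t4\bigl(8\pi-\int_{\Sigma_t}H^2\bigr)\le8\pi\mathfrak m_{ABL}$. This follows by comparing $H_g^2\,d\sigma_g$ with the Euclidean $H_{\bar g}^2\,d\sigma$: the tangential divergence term on $\Sigma_t$ produces the boundary integral in $\mathfrak m_{ABL}$. One then applies the half-space Willmore-type inequality $\int_{\Sigma_t}H_{\bar g}^2\ge8\pi$, so the Miao-style flux identity you planned is not needed.
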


\begin{remark}
\item[(i)] When $k=0$, the quantities are understood in the limit sense, that is,
\[
I_{a}(0)=0,\quad m=0,\quad r_{0}=\left(\frac{\mathfrak{c}_{p}}{a}\right)^{\frac{1}{a}},
\quad \eta(r_{0})=\frac{1}{a},\quad
\lim_{k\to 0}\frac{1-a\eta(r_{0})}{m}=\frac{2}{a+1}\left(\frac{\mathfrak{c}_{p}}{a}\right)^{-\frac{1}{a}}.
\]
\item[(ii)] When $-1<k<0$, equality holds for $(\mathcal{M}^{+,3}_{m,r_0},g_m)$ with $m<0$.
\end{remark}

{When $k=1$, some quantities are also understood in the limit sense in Theorem~\ref{Thm:1.02}, like $\frac{\eta(r_0)}{1-k}=I_{a}(1)2^{2a-1}$ where $r_0=m$, then we have the following theorem.}

\begin{theorem}\label{Thm:1.01}
Under the assumptions of Theorem \ref{Thm:1.02}. Let $p\in (1, 3)$ and $u$ be the weak solution to \eqref{p-H}. Denote $m=\left(I_a(1)\mathfrak{c}_{p}\right)^{\frac{1}{a}}.$
Then we have
\begin{equation}\label{geom-ineq-1}
\begin{aligned}
&2 \pi
+ 4 \int_{\Sigma} H |\nabla u|
- 2^{4a} (I_{a}(1))^{2} \int_{\Sigma} |\nabla u|^2
\ge 0,
\end{aligned}
\end{equation}
\begin{equation}\label{geom-ineq-2}
\begin{aligned}
&2 \pi (1+2a)
- 2^{4a} (I_{a}(1))^{2} \int_{\Sigma} |\nabla u|^2
\le 4\pi a \frac{\mathfrak{m}_{ABL}}{m}.
\end{aligned}
\end{equation}
Moreover, equality in each of the above inequalities holds if and only if $(M',g)$ is isometric to the Schwarzschild half space of mass $m$, $(\mathcal{M}^{+,3}_{m},g_m)$.
\end{theorem}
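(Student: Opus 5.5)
Theorem \ref{Thm:1.01} is the endpoint case $k=1$ of Theorem \ref{Thm:1.02}, so I will derive it by letting $k\to1^-$ in \eqref{geom-ineq-10} and \eqref{geom-ineq-20}. Three limits are needed. As $k\to1$, $I_a(k)\to I_a(1)$, so $m\to (I_a(1)\mathfrak{c}_p)^{1/a}$ and $r_0=m/k\to m$. For $\eta(r_0)$ I use the formula $\eta(r_0)=\mathfrak{c}_p^{-1}r_0^a(1+k)^{2a-1}(1-k)$. Since $r_0^a=k^{-a}I_a(k)\mathfrak{c}_p$, this gives
\[
\frac{\eta(r_0)}{1-k}=k^{-a}I_a(k)(1+k)^{2a-1}\to 2^{2a-1}I_a(1),
\]
which is the limit announced before the theorem. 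Consequently $\eta(r_0)\to0$, $1-a\eta(r_0)\to1$, and
\[
\frac{(1+k)^2}{(1-k)^2}\eta(r_0)^2\to 4\cdot 2^{4a-2}I_a(1)^2=2^{4a}I_a(1)^2 .
\]
So the left-hand sides of both inequalities converge to $2\pi-2^{4a}I_a(1)^2\int_\Sigma|\nabla u|^2$.

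\textbf{Step 1: inequality \eqref{geom-ineq-1}.} The right-hand side of \eqref{geom-ineq-10} has prefactor $\frac{(1+k)^2 r_0(1-a\eta)}{2m\,\eta}$, which behaves like $\frac{2}{\eta(r_0)}$ and blows up like $(1-k)^{-1}$. The brace must therefore be expanded to first order in $1-k$. Using $2\pi(1-k)^2/(1+k)^2=O((1-k)^2)$, the two $H^2/4$ terms cancel and the brace equals
\[
-\eta\int_\Sigma H|\nabla u|+\eta^2\int_\Sigma|\nabla u|^2+O((1-k)^2).
\]
Multiplying by the prefactor, the right-hand side tends to $-2\int_\Sigma H|\nabla u|$. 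The inequality becomes
\[
2\pi-2^{4a}I_a(1)^2\int_\Sigma|\nabla u|^2\ge -2\int_\Sigma H|\nabla u|.
\]
This has coefficient $2$ where \eqref{geom-ineq-1} has $4$. I would recheck the normalization of $\eta$ against \eqref{equ:eta}; if the factor persists, note that the weaker form with $4$ follows anyway provided $\int_\Sigma H|\nabla u|\ge0$. That sign is not automatic, so the honest route is to track constants until they match. I expect this bookkeeping to be the main point of care, since the limit is an indeterminate $\infty\cdot0$ form.

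\textbf{Step 2: inequality \eqref{geom-ineq-2}.} The right-hand side of \eqref{geom-ineq-20} is $4\pi a(\mathfrak{m}_{ABL}-m)(1-a\eta)/m$. As $k\to1$ it tends to $4\pi a\,\mathfrak{m}_{ABL}/m-4\pi a$. Moving $4\pi a$ to the left gives $2\pi(1+2a)-2^{4a}I_a(1)^2\int_\Sigma|\nabla u|^2\le 4\pi a\,\mathfrak{m}_{ABL}/m$, which is exactly \eqref{geom-ineq-2}.

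\textbf{Step 3: rigidity.} Equality in a limit does not follow from the rigidity statement for $k<1$. Instead I would rerun the monotonicity argument behind Theorem \ref{Thm:1.02} directly with the $k=1$ model function $\eta$, i.e. the Schwarzschild profile with $r_0=m$, for which $\Sigma$ corresponds to the minimal half-sphere. Equality forces the monotone quantity to be constant. The Bochner/Kato terms and the scalar and boundary curvature terms must then vanish, and this yields umbilic level sets and the Schwarzschild profile as in \cite{Xia--Yin--Zhou--2024}. This makes $(M',g)$ isometric to $(\mathcal{M}^{+,3}_{m},g_m)$. The converse is a direct computation on that model.
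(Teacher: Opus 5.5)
\textbf{Route.} Your approach to the inequalities differs from the paper's. The paper does not pass to a limit. Theorem~\ref{thm:Monot} already covers $k=1$, so the paper evaluates $F$ directly at $t=r_0=m$. There $\eta(m)=0$, and \eqref{ode-solution1} gives
\[
\alpha(m)=4m\cdot(-C_1/2m)=-2C_1,\qquad \beta(m)=\lambda\,2^{4a}I_a(1)^2,\qquad \gamma(m)=-\lambda,
\]
where $\lambda:=C_2\mathfrak{c}_p\frac{2m}{a}+C_1$. Combining $F(m)\ge\lim_{t\to\infty}F(t)\ge-8\pi C_2\mathfrak{c}_p(\mathfrak{m}_{ABL}-m)$ (Proposition~\ref{limit-F}) gives Corollary~\ref{prop:6.2}. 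The choices $(C_1,C_2)=(-1,0)$ and $(0,\mathfrak{c}_p^{-1})$ then yield the two inequalities, and rigidity comes from the equality case of the same monotone quantity.

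Your limit $k\to1^-$ in \eqref{geom-ineq-10} and \eqref{geom-ineq-20} is legitimate, because $u$ and $\Sigma$ do not depend on $k$. Your Step~2 reproduces \eqref{geom-ineq-2} exactly. The gain is that Theorem~\ref{Thm:1.02} is used as a black box and gives an independent check of the constants. The cost is the $\infty\cdot0$ expansion, and, as you note, the limit gives no rigidity. You must run the $k=1$ monotonicity anyway, and once you do, evaluating $F(m)$ directly makes the limit unnecessary. Your Step~3 plan is the paper's.

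\textbf{The coefficient in \eqref{geom-ineq-1}.} Your $2$ is correct, and no bookkeeping will turn it into $4$. The paper's own Corollary~\ref{prop:6.2} contains $-2C_1\int_\Sigma H|\nabla u|$, so $C_1=-1$, $C_2=0$ yields
\[
2\pi+2\int_\Sigma H|\nabla u|-2^{4a}(I_a(1))^2\int_\Sigma|\nabla u|^2\ge0 .
\]
The value $\alpha(m)=-4C_1$ displayed before that corollary, and the $4$ in the statement, are misprints. The printed version is in fact false. Take $p=2$, so $a=1$ and $2^{4a}(I_a(1))^2=4$. Take the Schwarzschild half-space $(1+m/|x|)^4\bar g$ outside the half-ball of radius $r_0=m/2$. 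All hypotheses hold: $R=0$, $\hat\partial M'$ is totally geodesic, $\Sigma$ is a connected free-boundary half-sphere, and $M'$ is contractible.

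Here $u=1-\frac{3m}{2(|x|+m)}$. On $\Sigma$ one finds $H=-\frac{4}{27m}$, $|\nabla u|=\frac{2}{27m}$ and $|\Sigma|=\frac{81\pi m^2}{2}$. Hence $\int_\Sigma H|\nabla u|=-\frac{4\pi}{9}$ and $\int_\Sigma|\nabla u|^2=\frac{2\pi}{9}$. The left side of \eqref{geom-ineq-1} as printed is $-\frac{2\pi}{3}<0$, whereas your version gives $\frac{2\pi}{9}\ge0$.

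More generally, test on $r_0=m/k$ with $k$ near $1$. The left side, with coefficient $c$ in front of $\int_\Sigma H|\nabla u|$, vanishes at $k=1$ and has $k$-derivative proportional to $2-c$ there. So $c=2$ is the only coefficient that can hold on both sides of $k=1$. Drop the fallback through $\int_\Sigma H|\nabla u|\ge0$; the correct statement is the one you derived.
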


From Theorem \ref{Thm:1.01} and Theorem \ref{Thm:1.02}, we get the following Bray--Miao-type mass-to-capacity inequality and capacity-to-area inequality.

\begin{theorem}\label{Thm:1.04}
Under the assumptions of Theorem \ref{Thm:1.02}. 
{Let $k\in (-1, 1]$ be such that
\[
1 - \frac{1}{8\pi} \int_{\Sigma} H^{2}  = \frac{4k}{(1+k)^{2}}.
\]
Then we have
\begin{eqnarray}
&&\mathfrak{m}_{ABL} \ge  \operatorname{sgn}(k) \left(I_a(k)\mathfrak{c}_{p}\right)^{\frac{1}{a}},\label{geom-ineq-4}\\
&&\sqrt{\frac{|\Sigma|}{32\pi}} \ge \frac{(1+k)^{2}}{4|k|} \left( I_{a}(k) \mathfrak{c}_{p} \right)^{\frac{1}{a}}.\label{geom-ineq-5}
\end{eqnarray}}
Moreover, equality in each of the above inequalities holds if and only if $(M',g)$ is isometric to the Schwarzschild half space of mass $m=\operatorname{sgn}(k) \left(I_a(k)\mathfrak{c}_{p}\right)^{\frac{1}{a}} (k\neq0)$ outside a rotationally symmetric half ball, or to $(\mathbb{R}^3_+\setminus B^3_{+,r_0}(0),\delta)$.
\end{theorem}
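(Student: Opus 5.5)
Theorem \ref{Thm:1.04} follows by combining the two inequalities of Theorem \ref{Thm:1.02} (for $k\in(-1,1)$) or of Theorem \ref{Thm:1.01} (for $k=1$), with $k$ chosen so that the Willmore-type term drops out. Fix $k\in(-1,1]$ with $1-\frac{1}{8\pi}\int_\Sigma H^2=\frac{4k}{(1+k)^2}$; such a $k$ exists because $k\mapsto 4k/(1+k)^2$ is increasing on $(-1,1]$ with range $(-\infty,1]$. Then
\[
2\pi\frac{(1-k)^2}{(1+k)^2}=2\pi-\frac{8\pi k}{(1+k)^2}=\frac{1}{4}\int_\Sigma H^2=\int_\Sigma\Big(\frac{H}{2}\Big)^2 .
\]
Hence the curly bracket in \eqref{geom-ineq-10} reduces to $\int_\Sigma\big(\frac H2-\eta(r_0)|\nabla u|\big)^2\ge0$.

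\emph{Proof of \eqref{geom-ineq-4} for $k\in(-1,1)$.} The prefactor $\frac{(1+k)^2 r_0}{\eta(r_0)}\frac{1-a\eta(r_0)}{2m}$ should be nonnegative. Indeed $r_0>0$ and $\eta(r_0)>0$, while $\frac{1-a\eta(r_0)}{m}$ is positive; for $k=0$ this is read in the limit sense of Remark (i), where it equals $\frac{2}{a+1}(\mathfrak c_p/a)^{-1/a}$. I would check the sign of $1-a\eta(r_0)$ against $\operatorname{sgn}k$ directly from the formula for $\eta(r_0)$ and $I_a(k)$; it reduces to an elementary integral inequality comparing $a\,I_a(k)$ with $|k|^a(1+k)^{-2a+1}(1-k)^{-1}$-type expressions. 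This sign verification is the main technical step. Granting it, \eqref{geom-ineq-10} gives that its left side is $\ge0$. Then \eqref{geom-ineq-20} yields $4\pi a(\mathfrak m_{ABL}-m)\frac{1-a\eta(r_0)}{m}\ge0$, so $\mathfrak m_{ABL}\ge m=\operatorname{sgn}(k)(I_a(k)\mathfrak c_p)^{1/a}$.

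\emph{The case $k=1$.} Here $\int_\Sigma H^2=0$, so $H\equiv0$ on $\Sigma$. Then \eqref{geom-ineq-1} gives $2\pi-2^{4a}I_a(1)^2\int_\Sigma|\nabla u|^2\ge0$. Consequently the left side of \eqref{geom-ineq-2} is at least $2\pi\cdot 2a$, and \eqref{geom-ineq-2} gives $4\pi a\ge 4\pi a\,\mathfrak m_{ABL}/m$ in the wrong direction. So I would instead track the sign carefully: \eqref{geom-ineq-2} must be used as $2\pi(1+2a)-X\le 4\pi a\,\mathfrak m/m$ with $X\le 2\pi$, giving $4\pi a\,\mathfrak m/m\ge 2\pi\cdot 2a$, i.e.\ $\mathfrak m_{ABL}\ge m$.

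\emph{Proof of \eqref{geom-ineq-5}.} For the area inequality I would use Gauss--Bonnet and Cauchy--Schwarz on $\Sigma$ rather than the mass. Alternatively, $\int_\Sigma H^2\ge0$ gives $|\Sigma|$-free information, so we need an extra step: apply H\"older, $\int_\Sigma(\frac H2)^2\ge$ something, plus the capacity identity $\mathrm{Cap}_p=\int_\Sigma|\nabla u|^{p-1}$ together with $\int_\Sigma|\nabla u|^2$ bounded above by the left side of \eqref{geom-ineq-10} being $\ge0$. Namely, $\eta(r_0)^2\int_\Sigma|\nabla u|^2\le 2\pi\frac{(1-k)^2}{(1+k)^2}$. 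Combining this with H\"older, $2\pi\mathfrak c_p^{p-1}=\int_\Sigma|\nabla u|^{p-1}\le|\Sigma|^{1-\frac{p-1}{2}}\big(\int_\Sigma|\nabla u|^2\big)^{\frac{p-1}{2}}$, and inserting the explicit $\eta(r_0)$, after algebra gives $|\Sigma|\ge 32\pi\big(\frac{(1+k)^2}{4|k|}\big)^2(I_a(k)\mathfrak c_p)^{2/a}$. This exponent bookkeeping is the second place I expect difficulty.

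\emph{Rigidity.} Equality forces equality in \eqref{geom-ineq-10} or \eqref{geom-ineq-20}, hence the rigidity statements of Theorems \ref{Thm:1.02} and \ref{Thm:1.01} apply. Conversely, the models are checked directly.
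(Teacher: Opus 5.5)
Your proposal is correct and follows the paper's own proof. The choice of $k$ gives $2\pi\frac{(1-k)^2}{(1+k)^2}=\int_\Sigma(\frac H2)^2$, so combining \eqref{geom-ineq-10} with \eqref{geom-ineq-20} (using $\frac{1-a\eta(r_0)}{m}>0$) yields the mass bound; the resulting nonnegativity of the left side of \eqref{geom-ineq-10}, together with H\"older against $\int_\Sigma|\nabla u|^{p-1}=2\pi\mathfrak{c}_p^{p-1}$, yields the area bound; and for the mass bound $k=1$ goes through Theorem~\ref{Thm:1.01} exactly as in the paper's proof of Theorem~\ref{Thm:1.03}.

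The sign fact you defer is a one-line check. The function $\phi(k):=|k|^a(1+k)^{1-2a}(1-k)^{-1}-aI_a(k)$ satisfies $\phi(0)=0$, $\phi'(k)=2|k|^a(1+k)^{-2a}(1-k)^{-2}>0$ for $k\neq0$, and $1-a\eta(r_0)=|k|^{-a}(1+k)^{2a-1}(1-k)\phi(k)$. Hence $\operatorname{sgn}(1-a\eta(r_0))=\operatorname{sgn}(k)=\operatorname{sgn}(m)$.

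For the area bound at $k=1$, your divided form $\eta(r_0)^2\int_\Sigma|\nabla u|^2\le 2\pi\frac{(1-k)^2}{(1+k)^2}$ is vacuous. Run the H\"older step instead with the undivided coefficient $\frac{(1+k)^2}{(1-k)^2}\eta(r_0)^2=\mathfrak{c}_p^{-2}r_0^{2a}(1+k)^{4a}$. At $k=1$ this equals $2^{4a}I_a(1)^2$ and is controlled by \eqref{geom-ineq-1} with $H\equiv0$.
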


{
\begin{remark}
By removing the nonnegative Hawking mass assumption, our results improve both the mass--capacity inequality of Silva~\cite{Silva--2025} and the mass--$p$-capacity inequality of the second and third authors~\cite{yin2026sharp}.
\end{remark}}

When $k=1$, we have the following theorem.

\begin{theorem}\label{Thm:1.03}
Under the assumptions of Theorem \ref{Thm:1.02}. Assume that $\Sigma$ is minimal. Then we have
\begin{eqnarray}
&&\mathfrak{m}_{ABL} \ge  \left( I_{a}(1) \mathfrak{c}_{p} \right)^{\frac{1}{a}}, \label{geom-ineq-6}\\
&&\sqrt{\frac{|\Sigma|}{32\pi}} \ge \left( I_{a}(1) \mathfrak{c}_{p} \right)^{\frac{1}{a}}. \label{geom-ineq-7}
\end{eqnarray}
Moreover, equality in each of the above inequalities holds if and only if $(M',g)$ is isometric to the Schwarzschild half space of mass $m=\left( I_{a}(1) \mathfrak{c}_{p} \right)^{\frac{1}{a}}$, $(\mathcal{M}^{+,3}_{m},g_m)$.
\end{theorem}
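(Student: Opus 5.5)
Theorem \ref{Thm:1.03} is the case $k=1$ of Theorem \ref{Thm:1.04}, and I would derive it from Theorem \ref{Thm:1.01} with $H\equiv 0$ on $\Sigma$.

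\emph{Step 1.} Setting $H=0$ in \eqref{geom-ineq-1} gives
\[
2^{4a}(I_a(1))^2\int_\Sigma|\nabla u|^2\le 2\pi .
\]

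\emph{Step 2: the mass bound \eqref{geom-ineq-6}.} Rearranging \eqref{geom-ineq-2} gives
\[
4\pi a\,\frac{\mathfrak{m}_{ABL}}{m}\ge 2\pi(1+2a)-2^{4a}(I_a(1))^2\int_\Sigma|\nabla u|^2 .
\]
Inserting Step 1, the right-hand side is at least $2\pi(1+2a)-2\pi=4\pi a$. Since $m>0$, this yields $\mathfrak{m}_{ABL}\ge m=(I_a(1)\mathfrak{c}_p)^{1/a}$.

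\emph{Step 3: the area bound \eqref{geom-ineq-7}.} Here I would use the flux identity $\mathrm{Cap}_p=\int_\Sigma|\nabla u|^{p-1}$, which holds because $\Sigma=\{u=0\}$ is regular by the Hopf lemma. Then apply H\"older's inequality:
\[
2\pi\,\mathfrak{c}_p^{\,p-1}=\int_\Sigma|\nabla u|^{p-1}\le \Big(\int_\Sigma|\nabla u|^2\Big)^{\frac{p-1}{2}}|\Sigma|^{\frac{3-p}{2}}\qquad (p\le 3).
\]
Combining this with Step 1 bounds $\mathfrak{c}_p$ in terms of $|\Sigma|$. Using $a=\frac{3-p}{p-1}$ and raising to the power $\frac{1}{p-1}$, the exponents should collapse to
\[
\mathfrak{c}_p\le \big(2^{-2a}I_a(1)^{-1}\big)\,|\Sigma|^{a/2}(2\pi)^{-a/2}.
\]
Taking the $a$-th root then gives
\[
(I_a(1)\mathfrak{c}_p)^{1/a}\le \tfrac14\sqrt{|\Sigma|/(2\pi)}=\sqrt{|\Sigma|/(32\pi)} .
\]
This step requires verifying the constant $2^{4a}$ against the exponent bookkeeping, which I expect to be the only delicate computation. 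As a consistency check, the Schwarzschild case should produce $|\Sigma|=2\pi(2m)^2$ for the minimal half-sphere with $r_0=m$, which gives exactly $\sqrt{|\Sigma|/(32\pi)}=m/\sqrt{1}\cdot\frac{1}{\ldots}$. I would fix the normalization of the constant by this model computation. Alternatively, \eqref{geom-ineq-7} is simply \eqref{geom-ineq-5} at $k=1$, where $(1+k)^2/(4|k|)=1$ and $\frac{1}{8\pi}\int H^2=0$ forces $k=1$, so Theorem \ref{Thm:1.04} gives it directly.

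\emph{Rigidity.} Equality in \eqref{geom-ineq-6} forces equality in \eqref{geom-ineq-1} and \eqref{geom-ineq-2}. Equality in \eqref{geom-ineq-7} forces equality in \eqref{geom-ineq-1}, and also forces $|\nabla u|$ to be constant on $\Sigma$ through the H\"older step. The rigidity part of Theorem \ref{Thm:1.01} then identifies $(M',g)$ with $(\mathcal{M}^{+,3}_m,g_m)$. Conversely, in the Schwarzschild half-space all the quantities can be computed explicitly and equality holds. The main obstacle is thus inherited from Theorem \ref{Thm:1.01} itself; the deduction here is algebraic.
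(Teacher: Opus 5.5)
Your proposal is correct and follows the paper's proof essentially verbatim. You set $H=0$ in \eqref{geom-ineq-1} and \eqref{geom-ineq-2} to get the chain $0\le 2\pi-2^{4a}(I_a(1))^2\int_\Sigma|\nabla u|^2\le 4\pi a\,(\mathfrak{m}_{ABL}/m-1)$, then apply H\"older's inequality to the flux identity $2\pi\mathfrak{c}_p^{p-1}=\int_\Sigma|\nabla u|^{p-1}$, and your exponent bookkeeping for \eqref{geom-ineq-7} is right. Only your side sanity check is off: the minimal half-sphere in $(\mathcal{M}^{+,3}_{m},g_m)$ has area $(1+1)^4\cdot 2\pi m^2=32\pi m^2$, not $2\pi(2m)^2$, and this gives exactly $\sqrt{|\Sigma|/(32\pi)}=m$.
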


As a consequence, let $p\to1$, we obtain the Riemannian Penrose-type inequality.
\begin{theorem}\label{Thm:1.6}
Under the assumptions of Theorem \ref{Thm:1.03}. Assume that $M'$
contains no other closed or free boundary minimal surfaces. Then we have
\begin{eqnarray}
&&\mathfrak{m}_{ABL} \ge  \sqrt{\frac{|\Sigma|}{32\pi}}. \label{geom-ineq-8}
\end{eqnarray}
Moreover, equality  can be obtained by the Schwarzschild half-space.
\end{theorem}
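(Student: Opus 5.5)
The plan is to derive Theorem \ref{Thm:1.6} from Theorem \ref{Thm:1.03} by letting $p\to1^+$. The inequality \eqref{geom-ineq-6} gives, for every $p\in(1,3)$,
\[
\mathfrak{m}_{ABL}\ge \bigl(I_a(1)\mathfrak{c}_p\bigr)^{1/a},\qquad a=\frac{3-p}{p-1}\to\infty .
\]
So it suffices to show
\[
\liminf_{p\to1^+}\bigl(I_a(1)\mathfrak{c}_p\bigr)^{1/a}\ge \sqrt{\frac{|\Sigma|}{32\pi}} .
\]
This requires two separate asymptotic computations.

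\emph{Asymptotics of $I_a(1)^{1/a}$.} Here $I_a(1)=\int_0^1 s^{a-1}(1+s)^{-2a}\,ds=\int_0^1 \bigl(s/(1+s)^2\bigr)^a s^{-1}\,ds$. The function $s/(1+s)^2$ attains its maximum $1/4$ at $s=1$. By a Laplace-type argument, $I_a(1)^{1/a}\to 1/4$ as $a\to\infty$.

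\emph{Asymptotics of $\mathfrak{c}_p^{1/a}$.} We have
\[
\mathfrak{c}_p^{1/a}=\Bigl(\frac{\mathrm{Cap}_p}{2\pi}\Bigr)^{\frac{1}{p-1}\cdot\frac{p-1}{3-p}}=\Bigl(\frac{\mathrm{Cap}_p(\Sigma,M')}{2\pi}\Bigr)^{\frac{1}{3-p}},
\]
so the key claim is
\[
\lim_{p\to1^+}\mathrm{Cap}_p(\Sigma,M')=|\Sigma|,
\]
which then gives $\mathfrak{c}_p^{1/a}\to \bigl(|\Sigma|/(2\pi)\bigr)^{1/2}$. Granting this, the product converges to $\tfrac14\sqrt{|\Sigma|/(2\pi)}=\sqrt{|\Sigma|/(32\pi)}$, which is the desired bound.

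\emph{The capacity limit (the main obstacle).}

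For the upper bound $\limsup_{p\to1}\mathrm{Cap}_p\le|\Sigma|$, insert the test function $\varphi=\min\{d(x,\Sigma)/\varepsilon,1\}$, suitably modified near $\hat\partial M'$ and cut off at infinity so it lies in $\mathcal A$. This gives $\mathrm{Cap}_p\le\varepsilon^{-p}|\{d<\varepsilon\}|\to\varepsilon^{1-p}(|\Sigma|+O(\varepsilon))$. Let $p\to1$ first, then $\varepsilon\to0$.

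The lower bound $\liminf_{p\to1}\mathrm{Cap}_p\ge|\Sigma|$ is where the hypothesis that $M'$ contains no other closed or free boundary minimal surfaces enters, together with minimality of $\Sigma$. Since $\Sigma$ is outer-minimizing in the relative sense, every competitor region $E\supset\Sigma$ with compact closure has relative perimeter $P(E;M')\ge|\Sigma|$. By coarea and Hölder, for $\varphi\in\mathcal A$,
\[
\int_{M'}|\nabla\varphi|\le\Bigl(\int|\nabla\varphi|^p\Bigr)^{1/p}\,|\mathrm{supp}\,\nabla\varphi|^{1-1/p}.
\]
For the minimizer $u$ this support is unbounded, so Hölder must be applied on a large bounded region $\{u\le 1-\delta\}$ together with a truncation. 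Then $\int_{\{u\le1-\delta\}}|\nabla u|\ge(1-\delta)|\Sigma|$ follows from coarea and the outer-minimizing property applied to the sublevel sets $\{u\le t\}$. The volume of $\{u\le1-\delta\}$ should be controlled uniformly as $p\to1$ using the asymptotics \eqref{equ:asymptotic u} and comparison with the Euclidean half-space barrier. This gives
\[
\mathrm{Cap}_p^{1/p}\ge(1-\delta)|\Sigma|\,V_\delta^{-(1-1/p)}\to(1-\delta)|\Sigma| .
\]
Uniform volume control is the delicate point. Alternatively, one could invoke the known result (in the spirit of Agostiniani--Mantegazza--Mazzieri--Oronzio \cite{Agostiniani--Mantegazza--Mazzieri--Oronzio--2023} and its free boundary analogues) that $\mathrm{Cap}_p\to$ the area of the strictly outward minimizing hull, which equals $|\Sigma|$ here.

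For the equality statement, the Schwarzschild half-space with $r_0=m$ satisfies $|\Sigma|=2\pi(2m)^2$. Hence $\sqrt{|\Sigma|/(32\pi)}=m=\mathfrak{m}_{ABL}$, so equality is attained.
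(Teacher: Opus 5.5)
Your overall architecture matches the paper's. You pass to the limit $p\to1^+$ in \eqref{geom-ineq-6}, use $I_a(1)^{1/a}\to\frac14$ and $\mathfrak{c}_p^{1/a}=(\mathrm{Cap}_p/2\pi)^{1/(3-p)}$, bound $\limsup_{p\to1}\mathrm{Cap}_p\le|\Sigma|$ with a distance cutoff, and get the lower bound from coarea plus the relative outer-minimizing property of $\Sigma$.

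\textbf{The gap.} It is exactly the step you flag: proving $V_\delta(p)^{1-1/p}\to1$ for $V_\delta(p)=|\{u_p\le1-\delta\}|$. Your suggested route does not give this.
\begin{itemize}
\item The expansion in Lemma~\ref{lem:2.2} holds for each fixed $p$, and its error term is not uniform in $p$.
\item The barriers $w^\pm$ in its proof need $C_2r^{-\tilde\tau}\ge a|\sigma|$ with $a=\frac{3-p}{p-1}\to\infty$.
\item The perturbation constant $C(n,p,\tilde\tau)$ is not tracked as $p\to1$.
\end{itemize}
So nothing you cite controls $\{u_p\le1-\delta\}$ uniformly in $p$. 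The fallback of quoting an AMMO-type result (capacity tends to the area of the outward-minimizing hull) does not help either. That result is proved for compact boundary with no noncompact boundary component, and its free-boundary analogue is precisely what has to be shown here.

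\textbf{The missing ingredient.} You need a Sobolev inequality on the asymptotically flat half-space with constant bounded for $p\in(1,p_0]$. The paper gets it (Corollary~\ref{cor:SobolevInequality}) from three pieces: the Euclidean half-space inequality, uniform comparability of $g$ and $\bar g$ on the end, and Pigola--Setti--Troyanov gluing. It then tests $\mathrm{Cap}_1$ with $(1-u_p)^q$, where $q=\frac{2p}{3-p}$ so that $\frac{(q-1)p}{p-1}=p^*$. H\"older and Sobolev give $\mathrm{Cap}_1\le q\,C_S^{q-1}\mathrm{Cap}_p^{q/p}$, and coarea with outer-minimality gives $\mathrm{Cap}_1\ge|\Sigma|$.

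Your own H\"older argument closes once this inequality is available. Since $1-u_p\ge\delta$ on $\{u_p\le1-\delta\}$, you get $\delta^{p^*}V_\delta(p)\le\bigl(C_S\,\mathrm{Cap}_p^{1/p}\bigr)^{p^*}$. This is uniformly bounded, because your cutoff already bounds $\mathrm{Cap}_p$ near $p=1$. Without some such uniform functional inequality, however, the lower bound is not proved.

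\textbf{A smaller slip.} In the equality check, the horizon $\{|x|=m\}$ of $(1+m/|x|)^4\bar g$ has area $(1+1)^4\cdot2\pi m^2=32\pi m^2=2\pi(4m)^2$, not $2\pi(2m)^2$. With your value, $\sqrt{|\Sigma|/(32\pi)}$ would come out as $m/2$ rather than $m$.
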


We prove Theorem \ref{Thm:1.01} via exhibiting monotonic quantities for $p$-harmonic functions.
To illustrate the monotonicity quantity, we shall use the following three one-variable functions. For any $k\in(-1,0)\cup(0,1]$,
let $m=\operatorname{sgn}(k)\left(I_a(k)\mathfrak{c}_{p}\right)^{\frac{1}{a}}$. We set
\begin{equation}\label{ode-solution1}
\begin{aligned}
\alpha(t)
=&\, t\left(1+\frac{m}{t}\right)^{2}
\left\{ \left( C_{2}\mathfrak{c}_{p}+C_{1}\frac{a}{2m} \frac{I_{a}(\frac{m}{t})}{I_{a}(k)}  \right)
\eta(t)-C_{1}\frac{1}{2m} \right\},\\
\beta(t)
=&\, - \eta(t)\alpha(t)
+ \left( C_{2}\mathfrak{c}_{p}\frac{2m}{a}+C_{1} \frac{I_{a}(\frac{m}{t})}{I_{a}(k)} \right) \mathfrak{c}_{p}^{-2} t^{2a}\left( 1+\frac{m}{t}\right)^{4a},\\
\gamma(t)
=&\, - \mathfrak{c}_{p}^{2} t^{-2a}\left( 1+\frac{m}{t}\right)^{-4a} \eta(t) \alpha(t)
-  \left( C_{2}\mathfrak{c}_{p}\frac{2m}{a}+C_{1} \frac{I_{a}(\frac{m}{t})}{I_{a}(k)} \right),
\end{aligned}
\end{equation}
where $C_1,C_2\in\mathbb{R}$ and $\eta$ is given by
\begin{equation}\label{equ:eta}
\eta(t):=\mathfrak{c}_{p}^{-1}t^{a}\left(1 + \frac{m}{ t}\right)^{2a-1}\left(1- \frac{m}{t}\right).
\end{equation}

We remark that $\alpha,\beta,\gamma$ satisfy the following ODE system (see Proposition~A.1 of \cite{Xia--Yin--Zhou--2024}):
\begin{equation}\label{equ: differential equations}
\begin{cases}
\alpha'(t) - (2 a + 1) \eta(t) f'(t) \alpha(t) - a f'(t) \beta(t) = 0, \\
\beta'(t) + (2 a + 1) (\eta(t))^2 f'(t) \alpha(t) = 0, \\
\gamma'(t) = - f'(t) \alpha(t),
\end{cases}
\end{equation}
where
\begin{equation}\label{equ:f}
f(t)
:= 1 - \int_{t}^{\infty} \mathfrak{c}_{p} s^{-a-1}\left(1+\frac{m}{s}\right)^{-2a}\,ds
=1-\frac{I_{a}(\frac{m}{t})}{I_{a}(k)}.
\end{equation}

Finally, we have the following monotone quantity along regular level sets of $p$-harmonic functions.

\begin{theorem}\label{thm:Monot}
Under the assumptions of Theorem \ref{Thm:1.02}.  Let $p\in(1,3)$ and $u$ be the weak solution to \eqref{p-H}.
{For any $k\in(-1,0)\cup(0,1]$}, let $\alpha,\beta,\gamma$ be given by \eqref{ode-solution1} with
\begin{equation}\label{alpha>0-assumpt}
C_{2} \ge 0,\qquad
\left( C_{2}\mathfrak{c}_{p}+C_{1}\frac{a}{2m} \right)\eta(r_{0}) \ge C_{1}\frac{1}{2m}.
\end{equation}
Let $F:[r_{0},\infty)\to\mathbb{R}$ be defined by
\begin{equation}\label{equ: F}
F(t):=2\pi\gamma(t)+\alpha(t)\int_{\Sigma_t}H|\nabla u|+\beta(t)\int_{\Sigma_t}|\nabla u|^2,
\end{equation}
where $\Sigma_t$ is a regular level set of $u$ given by
\[
\Sigma_t=\{x\in M:\ u(x)=f(t)\}.
\]
Then $F(t)$ is monotone non-increasing on
\[
\mathcal{T}:=\left\{ t\in[r_{0},\infty)\ \big|\ f(t)\ \text{is a regular value of }u \right\}.
\]
Moreover, $F$ is constant on $\mathcal{T}$ if and only if $(M',g)$ is isometric to the Schwarzschild half space of mass $m$ outside a rotationally symmetric half ball, $(\mathcal{M}^{+,3}_{m,r_0},g_m)$.
\end{theorem}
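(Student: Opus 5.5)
We follow the scheme of \cite{Xia--Yin--Zhou--2024}: differentiate $F$ along the level sets $\Sigma_t$, use the ODE system \eqref{equ: differential equations} to cancel every term that is not a sign-definite integral, and check that the free boundary contributes only terms of favourable sign.

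\textbf{Step 1: derivatives of the level-set integrals.} Write $\nu=\nabla u/|\nabla u|$ and $s=f(t)$. On the open set of regular values, $\Sigma_t$ is a free boundary surface meeting $\hat\partial M'$ orthogonally, because $\langle\nabla u,\mu\rangle=0$ there. Since $\frac{d}{dt}=f'(t)\frac{d}{ds}$, it suffices to compute
\[
\frac{d}{ds}\int_{\Sigma_s}|\nabla u|^2, \qquad \frac{d}{ds}\int_{\Sigma_s}H|\nabla u|.
\]
For the first, the divergence theorem gives
\[
\frac{d}{ds}\int_{\Sigma_s}|\nabla u|^2=\int_{\Sigma_s}\frac{\operatorname{div}(|\nabla u|\nabla u)}{|\nabla u|},
\]
with no boundary term since $\nabla u\perp\mu$. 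Using $\Delta_p u=0$, we write $\Delta u=-(p-2)|\nabla u|^{-1}\langle\nabla|\nabla u|,\nu\rangle\cdot|\nabla u|$ and $H=-|\nabla u|^{-1}(\Delta u-\nabla^2u(\nu,\nu))$. This expresses the derivative through $\int H|\nabla u|$, exactly as in the closed case.

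For the second, we use the first variation of $H$ under the normal speed $|\nabla u|^{-1}$:
\[
\partial_s H=-\Delta_{\Sigma}|\nabla u|^{-1}-(|A|^2+\Ric(\nu,\nu))|\nabla u|^{-1}.
\]
We substitute the traced Gauss equation $\Ric(\nu,\nu)=\tfrac12(R-R_\Sigma+H^2-|A|^2)$ and the $p$-harmonic relation $\partial_\nu|\nabla u|=-\frac{H|\nabla u|}{p-1}$ (up to sign conventions). Integrating $\Delta_\Sigma$ by parts produces a boundary term $\int_{\partial\Sigma_s}\partial_{n}\log|\nabla u|$, where $n$ is the outward conormal, which equals $\mu$ on $\partial M'$. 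Differentiating the Neumann condition $\langle\nabla u,\mu\rangle=0$ along $\Sigma_s$ gives $\nabla^2u(\nu,\mu)=-|\nabla u|h_{\partial M}(\nu,\nu)$. Hence
\[
\partial_\mu|\nabla u|=-|\nabla u|\,h_{\partial M}(\nu,\nu).
\]
Combining this with the geodesic curvature term in Gauss--Bonnet,
\[
\int_{\Sigma_s}K+\int_{\partial\Sigma_s}\kappa=2\pi\chi(\Sigma_s),
\]
and with $\kappa=h_{\partial M}(T,T)$ for a free boundary surface, the two boundary contributions add to $h_{\partial M}(\nu,\nu)+h_{\partial M}(T,T)=H_{\partial M}$. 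This is nonnegative by assumption.

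\textbf{Step 2: topology.} Simple connectivity of $M'$, connectedness of $\Sigma$, and the maximum principle for $u$ show that every regular level set $\Sigma_t$ is connected and has no closed components. Hence $\chi(\Sigma_t)\le1$, and this is what produces the $2\pi$ in $F$. We argue this as in \cite{Xia--Yin--Zhou--2024}: a closed component would bound a compact region on which $u$ is constant, and a second boundary component would contradict $H_1(M')=0$ via a loop argument together with the half-space end.

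\textbf{Step 3: assembling $F'$.} We insert the formulas of Step 1 into $F'(t)$ and use \eqref{equ: differential equations}. The coefficients of $\int H|\nabla u|$ and $\int|\nabla u|^2$ cancel, and $\gamma'=-f'\alpha$ absorbs the Gauss--Bonnet term $2\pi\chi$. This leaves
\[
F'(t)=-f'(t)\,\alpha(t)\int_{\Sigma_t}Q,
\]
where $Q$ is a nonnegative combination of $R$, $|\mathring A|^2$, $|\nabla^\Sigma|\nabla u||^2/|\nabla u|^2$, a completed square in $H$ and $|\nabla u|$ (modelled on the Schwarzschild relation), and the boundary integrand $H_{\partial M}$. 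Since $f'>0$, monotonicity reduces to $\alpha(t)\ge0$ on $[r_0,\infty)$. We obtain this from the explicit formula \eqref{ode-solution1}: at $t=r_0$ it is exactly \eqref{alpha>0-assumpt}, and for larger $t$ we use $C_2\ge0$, $\eta'>0$, and the monotonicity of $I_a(m/t)$.

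\textbf{Step 4: singular values.} Regular values need not form an interval, so we extend monotonicity across critical values. Following \cite{Agostiniani--Mantegazza--Mazzieri--Oronzio--2023} and \cite{Xia--Yin--Zhou--2024}, we run the computation in integrated form: we apply the divergence theorem on $\{f(t_1)<u<f(t_2)\}$ to the regularized vector fields $|\nabla u|\nabla u$ and $\nabla|\nabla u|\cdot\phi_\varepsilon(|\nabla u|)$, then let $\varepsilon\to0$. For this we use that the critical set has measure zero and the $C^{1,\beta}$ and $W^{2,2}_{loc}$ regularity of $u$ away from $\{\nabla u=0\}$. The boundary $\hat\partial M'$ requires a reflection or a boundary version of this regularity.

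\textbf{Rigidity.} If $F$ is constant, then $Q\equiv0$. This gives $R=0$, $H_{\partial M}=0$, umbilic level sets with constant $|\nabla u|$ on each $\Sigma_t$, and the square term forces the Schwarzschild relation. The metric is then $g=|\nabla u|^{-2}du^2+g_{\Sigma_t}$ with round hemispherical slices, which gives the Schwarzschild half space $(\mathcal M^{+,3}_{m,r_0},g_m)$ after matching with the asymptotics \eqref{equ:asymptotic u}.

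We expect Step 4 to be the main obstacle, specifically the regularization near critical points of $u$ that touch $\hat\partial M'$. We would handle it by locally doubling across $\hat\partial M'$ using the Neumann condition, so that the interior theory applies.
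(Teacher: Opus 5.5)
Your Steps 1--2 and the cancellation in Step 3 match the paper: Lemma~\ref{lem:2.4}, the identity $\nabla_\mu|\nabla u|=-|\nabla u|h^{\hat\partial M'}_{\nu\nu}$, the splitting $h_{\nu\nu}+h_{\tau\tau}=H^{\hat\partial M'}$ against Gauss--Bonnet, and Lemma~\ref{lem:2.3}.

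\textbf{Step 4 is the genuine gap.} Cutting off the vector field by $\phi_\varepsilon(|\nabla u|)$ while keeping $u$ itself cannot close the argument. After the coarea formula you need $\int_{\Sigma_s}K+\int_{\partial\Sigma_s}h^{\hat\partial M'}_{\tau\tau}\le2\pi$ for a.e.\ intermediate level $s$, so almost every level set of $u$ must be regular. A $p$-harmonic function is only $C^{1,\beta}$ at its critical points, so Sard's theorem is not available. The Lebesgue-nullity of $\{\nabla u=0\}$ does not help either: on a singular level, $\Sigma_s\setminus\{\nabla u=0\}$ is a noncompact surface with no Gauss--Bonnet bound.

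The paper's key device is to regularize the \emph{equation}, not just the vector field. On $\{0<u<f(T)\}$ it replaces $u$ by the smooth solutions $v_\varepsilon$ of \eqref{eqn:3.1}, to which Sard applies. It then re-normalizes $\mathrm{Cap}_{p,\varepsilon}$, $m_\varepsilon$, $f_\varepsilon$, $\eta_\varepsilon$, and takes $\alpha_\varepsilon,\beta_\varepsilon,\gamma_\varepsilon$ solving \eqref{eqn:3.4}, so the ODE cancellations survive. The cutoff $|\nabla v|_\delta$ appears only inside the vector field. This yields \eqref{asymp-monot} with the explicit error $\varepsilon\int\frac{|\alpha_\varepsilon'-\beta_\varepsilon|^2}{\alpha_\varepsilon}|\nabla v|$, together with $F_\varepsilon(t_i)\to F(t_i)$ (Lemma~\ref{lem:3.2}).

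Doubling across $\hat\partial M'$ is also not the right fix. Unless $\hat\partial M'$ is totally geodesic, the doubled metric is only Lipschitz across the seam. The smooth theory then fails exactly there, and $H^{\hat\partial M'}$ reappears as a singular curvature term. The paper instead computes the boundary flux directly from the Neumann condition, $\langle X_{\varepsilon,\delta},\mu\rangle=\alpha_\varepsilon\frac{|\nabla v|^2}{|\nabla v|_\delta}h^{\hat\partial M'}_{\nu\nu}$ (Lemma~\ref{lem:3.3}(i)). This pairs with the geodesic-curvature term exactly as in your Step~1.

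\textbf{Your justification of $\alpha\ge0$ in Step~3 is also wrong as stated.} First, $\eta'>0$ fails in general. With $x=m/t$,
\[
\frac{\eta'}{\eta}=\frac{a+2(1-a)x+ax^2}{t(1-x^2)},
\]
which is negative near $t=r_0$ when $a<\tfrac12$ (i.e.\ $p>\tfrac73$) and $k$ is close to $-1$. Second, even where $\eta'>0$, $\alpha$ contains $C_1\,\eta(t)I_a(\tfrac mt)$. This is a product of an increasing and a decreasing factor, with a coefficient of either sign, so no sign follows.

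What works is Proposition~A.2 of \cite{Xia--Yin--Zhou--2024}, which the paper cites. Set
\[
\psi(t):=\frac{\alpha(t)}{t(1+\frac mt)^2\eta(t)}=C_2\mathfrak{c}_p+\frac{C_1}{2m}\Bigl(a\frac{I_a(\frac mt)}{I_a(k)}-\frac{1}{\eta(t)}\Bigr),
\qquad
\psi'(t)=\frac{C_1\mathfrak{c}_p}{t^{a+2}(1+\frac mt)^{2a}(1-\frac mt)^{2}},
\]
and note $\psi(t)\to C_2\mathfrak{c}_p$ as $t\to\infty$. Thus $\psi$ is monotone with the sign of $C_1$, and $\psi\ge\min\{\psi(r_0),C_2\mathfrak{c}_p\}\ge0$ on $(r_0,\infty)$. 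This is where $C_2\ge0$ really enters. For $k=1$, $\eta(r_0)=0$ and \eqref{alpha>0-assumpt} forces $C_1\le0$, so $\psi\ge C_2\mathfrak{c}_p$.

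The same computation gives $\alpha>0$ on $(r_0,\infty)$ unless $C_1=C_2=0$. Your rigidity step needs this to pass from $F$ constant to $Q\equiv0$.
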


{
The proof of above theorems follows closely our previous work \cite{Xia--Yin--Zhou--2024} for asymptotically flat $3$-manifolds with compact boundary.  The presence of a noncompact boundary gives rise to a few new analytic difficulties. In particular, one needs to establish the precise asymptotic behavior of $p$-capacitary functions with mixed boundary value problem,  see Lemma~\ref{lem:2.2}.  %, as well as suitable Sobolev inequalities adapted to asymptotically flat half-spaces.

The paper is organized as follows. In Section~\ref{Sec:02}, we prove the existence and regularity of $p$-capacity potentials and derive the asymptotic expansion of the corresponding potential function. In Section~\ref{Sec:03}, we collect some facts about the Schwarzschild half-space, which will be used in the construction of monotone quantities. In Section~\ref{Sec:04}, we introduce these monotone quantities, and in Section~\ref{Sec:05}, we analyze their asymptotic behavior. Section~\ref{Sec:06} is devoted to the proof of the main results, including the mass--$p$-capacity inequalities. Finally, in Section~\ref{Sec:07}, we establish Sobolev inequalities on asymptotically flat manifolds with noncompact boundary and combine them with the mass--$p$-capacity inequality to prove the Penrose inequality.
}

} % end blue

\section{$p$-Capacitary functions with mixed boundary value problem}\label{Sec:02}

In this section, we will prove the existence, regularity, and asymptotic behavior of a $p$-capacitary potential of $3$-dimensional asymptotically flat half-space. Also we will prove the connectedness of the regular level sets of the solution.

\begin{lemma}
There exists a unique solution $u \in W^{1, p}_{loc}(M^{\prime})$ to the problem \eqref{p-H}
in the distribution sense. Moreover, $0<u\leq 1$ and
\begin{eqnarray*}
\mathrm{Cap}_{p}(\Sigma, M^{\prime})
:=\inf\bigg\{ \int_{M^{\prime}}|\nabla u|^p\bigg\}.
\end{eqnarray*}
\end{lemma}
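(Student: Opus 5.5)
The plan is to use the direct method of the calculus of variations on the admissible class $\mathcal A$, and then to identify the minimizer as the weak solution of \eqref{p-H}. Fix $f$ as in the definition of $\mathcal A$, so that $f-1\in C^\infty_c(\overline{M'})$ and $f=0$ near $\Sigma$. Minimize $E(\varphi)=\int_{M'}|\nabla\varphi|^p$ over $\varphi=f+v$. The homogeneous space should be the completion of $\{v\in C^\infty_c(\overline{M'}) : v=0 \text{ near } \Sigma\}$ under $\|\nabla v\|_{L^p}$. For this space to be a genuine function space we need a Sobolev inequality on $M'$,
\[
\Big(\int_{M'}|v|^{p^*}\Big)^{1/p^*}\le C\Big(\int_{M'}|\nabla v|^p\Big)^{1/p},\qquad p^*=\tfrac{3p}{3-p},
\]
valid for $v$ vanishing near $\Sigma$ but not necessarily on $\hat\partial M'$. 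We obtain it by gluing two pieces with a partition of unity:
\begin{itemize}
\item On the end $\{x\in\mathbb R^3_+:|x|\ge1\}$, the metric is uniformly equivalent to $\bar g$. Even reflection across $x_3=0$ reduces the estimate to the Euclidean Sobolev inequality on $\mathbb R^3$.
\item On the compact core, we use a Poincaré--Sobolev inequality for functions vanishing on the nonempty piece $\Sigma$ of the boundary.
\end{itemize}
This step is the main analytic obstacle, since $\hat\partial M'$ is noncompact and carries no Dirichlet condition.

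Given the Sobolev inequality, take a minimizing sequence $\varphi_j=f+v_j$. The energies are bounded, so $\nabla v_j$ is bounded in $L^p$, and hence $v_j$ is bounded in $L^{p^*}$. By reflexivity, a subsequence converges weakly, and Rellich gives strong convergence on compact sets. Weak lower semicontinuity of the convex functional $E$ produces a minimizer $u=f+v$. Strict convexity of $\xi\mapsto|\xi|^p$ gives uniqueness of $\nabla u$. Since $v$ lies in the closure and $v=0$ on $\Sigma$, $\nabla u$ determines $u$, so $u$ itself is unique. The Euler--Lagrange equation, tested against every $\psi\in C^\infty_c(\overline{M'})$ vanishing near $\Sigma$ but free on $\hat\partial M'$, reads
\[
\int_{M'}|\nabla u|^{p-2}\langle\nabla u,\nabla\psi\rangle=0 .
\]
This is exactly the weak formulation of $\Delta_pu=0$ in $M'$ together with $\langle\nabla u,\mu\rangle=0$ on $\hat\partial M'$. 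The Dirichlet condition $u=0$ on $\Sigma$ holds in the trace sense by construction. The condition $u\to1$ at infinity holds in the sense that $u-1\in L^{p^*}$ near infinity; the pointwise limit follows later from the asymptotic analysis or from local $L^\infty$ bounds. The identity $\mathrm{Cap}_p(\Sigma,M')=\int_{M'}|\nabla u|^p$ is immediate because $u$ is the minimizer.

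For $0<u\le1$, the plan is to use truncation. The function $\min\{\max\{u,0\},1\}$ again lies in $\mathcal A$ and has no larger energy. By uniqueness it equals $u$, so $0\le u\le1$. Strict positivity inside $M'$ follows from the strong maximum principle, or equivalently from the weak Harnack inequality for $p$-harmonic functions. Near $\hat\partial M'$ we first apply this principle after reflecting in local Fermi coordinates, where the Neumann condition makes the reflected function a solution of a uniformly elliptic $p$-Laplace-type equation. If $u$ vanished at some point, it would vanish on all of the connected set $M'$, which contradicts $u-1\in L^{p^*}$ at infinity.
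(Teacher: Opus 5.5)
Your argument is correct in outline but takes a different route from the paper. The paper never works on all of $M'$ at once. It exhausts $M'$ by the bounded regions $M'_R$ and minimizes there with Dirichlet data $1$ on the outer sphere $\Gamma_R$, so only the Poincar\'e inequality on a bounded domain is needed. It then uses the Ma--Yang--Yin comparison principle to get $0\le u_R\le 1$ and monotonicity in $R$, and passes to the limit with DiBenedetto's interior $C^{1,\alpha}$ estimates. Finally it identifies $\int_{M'}|\nabla u|^p=\mathrm{Cap}_p(\Sigma,M')$ by Fatou's lemma on one side and Mazur's lemma applied to the admissible $u_{R_j}$ on the other; uniqueness and $u\le 1$ are again read off from comparison.

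You instead run the direct method in the homogeneous space $\mathcal D$. This costs you a global Sobolev inequality, which the paper only proves in Section 7 via Pigola--Setti--Troyanov and $p$-hyperbolicity. Your cutoff gluing, with the term $v\nabla\chi$ absorbed by the Poincar\'e inequality on the core, is a more elementary way to obtain it. In exchange you gain several things:
\begin{itemize}
\item the minimizer and the capacity identity come at once, with no Fatou/Mazur step;
\item you get the quantitative fact $u-1\in L^{p^*}$ near infinity, which is what one really needs to justify comparison with barriers at infinity later;
\item you give an actual argument for $u>0$, which the paper's proof does not address.
\end{itemize}
$\mathcal D$ is also the natural space. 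Since $u-1\sim-\frac{\mathfrak c_p}{a}r^{-a}$, the minimizer is not in $L^p$ near infinity when $p\ge\sqrt3$. So just record that the infimum over $f+\mathcal D$ equals the infimum over $\mathcal A$ by density.

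Two points to tighten. First, strict convexity gives uniqueness among minimizers, whereas the lemma asserts uniqueness of distributional solutions of \eqref{p-H}. You can close this in either of two ways:
\begin{itemize}
\item By convexity: a weak solution $w$ with $w-f\in\mathcal D$ satisfies $\int|\nabla\varphi|^p\ge\int|\nabla w|^p+p\int|\nabla w|^{p-2}\langle\nabla w,\nabla(\varphi-w)\rangle=\int|\nabla w|^p$, so it is a minimizer.
\item For solutions known only to lie in $W^{1,p}_{loc}$, continuous and tending to $1$ at infinity: subtract the two weak formulations tested with $(u_1-u_2-\varepsilon)^+$. This function is compactly supported, vanishes near $\Sigma$, and is free on $\hat\partial M'$.
\end{itemize}
Second, the pointwise limit $u\to1$ is part of \eqref{p-H}, so it should not be deferred. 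Carry out the local sup estimate for $u-1$ on balls of radius comparable to $|x|$, using your Neumann reflection near $\hat\partial M'$ and scale-invariant constants.
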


\begin{proof}
\textbf{Step 1:} We obtain a local version of the result.

Assume that $\Phi: \{x \in \mathbb{R}^{3}_{+}: |x|\geq 1\}\rightarrow M\setminus \Omega$
is diffeomorphic.
Set $M^{\prime}_{R}:=\Phi(\{x \in \mathbb{R}^{3}_{+}: 1\leq|x|<R\})\cap M^{\prime}$
and $\Gamma_{R}:=\Phi(\{x \in \mathbb{R}^{3}_{+}: |x|=R\})$.
We consider
\begin{eqnarray*}
\mathrm{Cap}_{p}(\Sigma, M^{\prime}_{R})
:=\inf\bigg\{ \int_{M^{\prime}_{R}}|\nabla \varphi|^p: \varphi \in \mathcal{A}_R\bigg\},
\end{eqnarray*}
where
\begin{eqnarray*}
\mathcal{A}_R:=\Big\{\varphi: \varphi-f \in W^{1, p}_{0, \Gamma_{R}\cup \Sigma}(M^{\prime}_{R})\Big\}.
\end{eqnarray*}
Then, we choose a minimizing
sequence $\{\varphi_j\}\subset  \mathcal{A}_R$ such that
\begin{eqnarray*}
\lim_{j\rightarrow +\infty}\int_{M^{\prime}_{R}}|\nabla u_j|^p=\mathrm{Cap}_{p}(\Sigma, M^{\prime}_{R}).
\end{eqnarray*}
Applying the Poincar\'e inequality (see (7.69) in \cite{Salsa--2016}),
there exists a constant $C(R)$ such that
\begin{eqnarray*}
\int_{M^{\prime}_{R}}|u_j-f|^p\leq C(R)\int_{M^{\prime}_{R}}|\nabla (u_j-f)|^p
\leq C(R)(\int_{M^{\prime}_{R}}|\nabla u_j|^p+1)
\end{eqnarray*}
which implies that $\parallel u_j-f\parallel_{W^{1, p}_{0, \Gamma_{R}\cup \Sigma}}$ is bounded.
So, there exists a subsequence (which we still denote as $\{u_j\}$ for convenience)
such that
\begin{eqnarray*}
u_j-f\rightharpoonup u_R-f \quad \mbox{in} \quad W^{1, p}_{0, \Gamma_{R}\cup \Sigma}(M^{\prime}_{R})
\quad \mbox{as} \quad j\rightarrow +\infty.
\end{eqnarray*}
Then, the weak convergence implies that
\begin{eqnarray*}
\int_{M^{\prime}_{R}}|\nabla u_R|^p\leq
\lim_{j\rightarrow +\infty}\int_{M^{\prime}_{R}}|\nabla u_j|^p
=\mathrm{Cap}_{p}(\Sigma, M^{\prime}_{R}).
\end{eqnarray*}
Thus, $u_R \in \mathcal{A}_R$ and
\begin{eqnarray}\label{Yin11}
\mathrm{Cap}_{p}(\Sigma, M^{\prime}_{R})=\int_{M^{\prime}_{R}}|\nabla u_R|^p.
\end{eqnarray}
Moreover, by calculus of variation, it is easy to see that $u_R$ is a weak to
the local version of the problem \eqref{p-H}
\begin{equation*}
\left\{
\begin{array}{ll}
&\Delta_pu=0
\quad \mbox{in} \quad M^{\prime}_{R}, \\
&u=0 \quad \mbox{on} \quad \Sigma, \\
&\langle \nabla u, \mu\rangle=0 \quad \mbox{on} \quad \hat{\partial} M^{\prime}_{R},
\\
&u(x)=1 \quad \mbox{on} \quad \Gamma_R.
\end{array}
\right.
\end{equation*}
By applying \cite[Lemma 2.2]{Ma--Yang--Yin--2025}, we know that $u_r(x)\geq u_s(x)$ for $r>s$ and
$x \in M^{\prime}_{R}$ and $0\leq u_R\leq 1$.

\textbf{Step 2:} We find a weak solution to the problem \eqref{p-H}.

From the monotonicity and boundedness of solutions as mentioned above,
the function $u(x):=\lim_{R\rightarrow +\infty}u_R(x)$ is well defined for
$x \in M^{\prime}_{R}$. We will prove that $u$ is a unique weak solution
to the problem \eqref{p-H}.

We claim that
\begin{eqnarray}\label{Yin13}
\lim_{R\rightarrow +\infty}\mathrm{Cap}_{p}(\Sigma, M^{\prime}_{R})=\mathrm{Cap}_{p}(\Sigma, M^{\prime}).
\end{eqnarray}
On the one hand, since $\mathcal{A}_R\subset \mathcal{A}$, we have
\begin{eqnarray}\label{Yin14}
\mathrm{Cap}_{p}(\Sigma, M^{\prime}_{R})\geq\mathrm{Cap}_{p}(\Sigma, M^{\prime}).
\end{eqnarray}
On the other hand, for any function $v=u-f \in W^{1, p}_{0, \Sigma}(M^{\prime})$,
using Lemma 7.86 in \cite{Salsa--2016}, there exists a sequence of
$\{v_j\}\subset C_{c}^{\infty}(\overline{M^{\prime}})$ vanishing in a neighborhood of $\overline{\Sigma}$ which converges to $v$
in $W^{1, p}_{0, \Sigma}(M^{\prime})$. When $j$ is fixed, $u_j=f+v_j \in \mathcal{A}_R$
for $R$ large enough. Thus, we have
\begin{eqnarray*}
\int_{M^{\prime}_{R}}|u_j|^p\geq \mathrm{Cap}_{p}(\Sigma, M^{\prime}_{R})
\geq \lim_{R\rightarrow+\infty}\mathrm{Cap}_{p}(\Sigma, M^{\prime}_{R}).
\end{eqnarray*}
Since $u_j$ converges to $u$
in $W^{1, p}_{0, \Sigma}(M^{\prime})$, we obtain
\begin{eqnarray*}
\int_{M^{\prime}}|u|^p\geq \mathrm{Cap}_{p}(\Sigma, M^{\prime}_{R})
\geq \lim_{R\rightarrow+\infty}\mathrm{Cap}_{p}(\Sigma, M^{\prime}_{R}).
\end{eqnarray*}
Thus, the claim \eqref{Yin13} follows.

From \eqref{Yin11} and \eqref{Yin13}, and the interior regularity theory in \cite[Theorems 1 and 2]{DiBenedetto--1983--NATMA},
we deduce that $u_R$ is uniformly
bounded for large $R$ in $C^{1, \alpha}_{loc}(M^{\prime}_{R})\cap W^{1, p}(M^{\prime}_{R})$.
By Arzel\`a-Ascoli
theorem and a diagonal process one can find a sequence $R_j\rightarrow +\infty$ such that
$u_{R_j}\rightarrow u$ in $C^{1, \alpha}_{loc}(M^{\prime})\cap W^{1, p}_{loc}(M^{\prime})$.
It turns out that $u$ is a weak solution of problem \eqref{p-H}. The inequality $u\leq1$ and the uniqueness
of the solution can be directly obtained from the comparison theorem (see Lemma 2.2 in \cite{Ma--Yang--Yin--2025}).

\textbf{Step 3:} We will show
\begin{eqnarray*}
\mathrm{Cap}_{p}(\Sigma, M^{\prime})
= \int_{M^{\prime}}|\nabla u|^p.
\end{eqnarray*}
One the one hand, we know from Fatou's lemma
\begin{eqnarray}\label{Yin15}
\lim_{R\rightarrow +\infty}\mathrm{Cap}_{p}(\Sigma, M^{\prime}_{R})
=\mathrm{Cap}_{p}(\Sigma, M^{\prime})
=\lim_{R\rightarrow +\infty} \int_{M^{\prime}_{R}}|\nabla u_R|^p
\geq\int_{M^{\prime}}|\nabla u|^p.
\end{eqnarray}

On the other hand, since the sequence $\nabla u_R$ is uniformly bounded for large $R$
in $L^{p}(M^{\prime})$ and $u_{R_j}\rightarrow u$ in
$C^{1, \alpha}_{loc}(M^{\prime})$, we know $\nabla u_{R_j}\rightharpoonup \nabla u$ in
$L^{p}(M^{\prime})$. Then, applying Mazur lemma, we obtain that for any $\varepsilon>0$, there exists
$\lambda_i\geq 0$ $(i=1, 2, ..., N)$ such that
\begin{eqnarray*}
\parallel\nabla u-\sum_{i=1}^{N}\lambda_i\nabla u_{R_i}\parallel_{L^p}<\varepsilon.
\end{eqnarray*}
Using the fact $u_{R_i} \in \mathcal{A}_{R_i}$ and the inequality \eqref{Yin14}, we conclude
\begin{eqnarray}\label{Yin16}
\int_{M^{\prime}}|\nabla u|^p\geq \mathrm{Cap}_{p}(\Sigma, M^{\prime}).
\end{eqnarray}
Thus, the claim follows from \eqref{Yin15} and \eqref{Yin16}. This completes the proof.
\end{proof}

Finally, we obtain the follow asymptotic expansion of $u$. 
\begin{lemma}\label{lem:2.2}
Let $u$ be a solution to \eqref{p-H}. Then $u \in C^{1, \alpha}({M^{\prime}})$.
It is known that, $u$ has an asymptotic expansion
\begin{eqnarray*}
u=1- \frac{\mathfrak{c}_p}{a}r^{-a}+O_{2}(r^{-a-\tilde{\tau}}),
\quad \mbox{as} \quad r=|x|\rightarrow +\infty
\end{eqnarray*}
for $0<\tilde{\tau}<\min\{\tau, 1\}$, where
\begin{eqnarray*}
a:=\frac{3-p}{p-1}, \quad \mathfrak{c}_p:=\Big(\frac{\mathrm{Cap}_p}{2\pi}\Big)^{\frac{1}{p-1}}.
\end{eqnarray*}
\end{lemma}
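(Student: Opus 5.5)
The plan is to first reduce the mixed problem to a problem without boundary by reflection near infinity, and then to import the known asymptotics for $p$-capacitary potentials on asymptotically flat manifolds, for instance those used in \cite{Xia--Yin--Zhou--2024}.

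\textbf{Regularity up to the boundary.} Global $C^{1,\alpha}$ regularity on $M'$ comes from three local statements. In the interior it is DiBenedetto's estimate \cite{DiBenedetto--1983--NATMA}. Near $\Sigma$ we have a Dirichlet condition, so Lieberman's boundary regularity for quasilinear equations applies. Near $\hat\partial M'$ we have a homogeneous conormal condition $\langle\nabla u,\mu\rangle=0$, and Lieberman's conormal-derivative theory applies there. The delicate place is the corner $\partial\Sigma=\Sigma\cap\hat\partial M'$. Since $\Sigma$ meets $\hat\partial M'$ orthogonally, we work in Fermi-type coordinates adapted to $\hat\partial M'$ and reflect $u$ evenly across $\hat\partial M'$. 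Because the contact angle is $\pi/2$, the reflected metric is Lipschitz and the reflected surface $\Sigma\cup\Sigma^*$ is $C^{1,1}$. The reflected function then solves a $p$-Laplace type equation with measurable-coefficient, Lipschitz structure and Dirichlet data on a $C^{1,1}$ surface. Lieberman's theory, which only needs Hölder continuous coefficients, then gives $C^{1,\alpha}$ up to the corner.

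\textbf{Asymptotics.} In the end $\{|x|\ge R_0\}\cap\mathbb{R}^3_+$ we reflect across $\{x_3=0\}$: set $\tilde g_{ij}(x',-x_3)=\pm g_{ij}(x',x_3)$, with signs chosen so that the reflection is an isometry, and $\tilde u(x',-x_3)=u(x',x_3)$. The Neumann condition guarantees that $\tilde u$ is a weak $p$-harmonic function on the exterior region of $\mathbb{R}^3$ for the metric $\tilde g$. This metric is $C^{0,1}$ across the plane and satisfies $\tilde g-\delta=O(|x|^{-\tau})$, $\partial\tilde g=O(|x|^{-\tau-1})$ in the weak sense. We then argue in four steps.
\begin{enumerate}
\item By comparison with radial barriers $1-A r^{-a}\pm B r^{-a-\tilde\tau}$, which are sub- and supersolutions for large $r$ once the decay of $g$ is used, we obtain $c r^{-a}\le 1-u\le C r^{-a}$.
\item Rescaling $u_R(y)=R^{a}(1-u(Ry))$ on annuli and using $C^{1,\alpha}$ estimates, we get $|\nabla u|\sim r^{-a-1}$. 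In particular the equation is uniformly elliptic at infinity.
\item In this uniformly elliptic regime we linearize around the Euclidean fundamental solution. The function $w=1-u-\frac{c}{a}r^{-a}$ satisfies a linear equation whose leading operator is the linearized $p$-Laplacian at $r^{-a}$ (an anisotropic operator with radial weight $p-1$) plus error terms of size $O(r^{-a-2-\tau})$. A weighted maximum principle or barrier argument with exponent $-a-\tilde\tau$, where $\tilde\tau<1$ keeps us below the next homogeneous solution, gives $w=O(r^{-a-\tilde\tau})$. Schauder estimates on the rescaled annuli upgrade this to $O_2$ away from the reflection plane. On the original half-space, where $g$ is smooth up to $x_3=0$ and the Neumann problem has boundary Schauder estimates, we get $O_2$ all the way.
\item The constant is identified by the flux identity $\operatorname{Cap}_p=\int_{\{u=t\}}|\nabla u|^{p-1}$. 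As $t\to1$ this equals $(c\,r^{-a-1})^{p-1}\cdot 2\pi r^2(1+o(1))=2\pi c^{p-1}$, using the half-sphere area $2\pi r^2$ and $(a+1)(p-1)=2$. Hence $c=\mathfrak{c}_p$.
\end{enumerate}

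The main obstacle is step 3. The linearized operator has coefficients depending on the direction of $\nabla u$, and the reflected metric is only Lipschitz, so we must check that the error terms really decay like $r^{-a-2-\tilde\tau}$. If the barrier approach fails, the first fallback is to work with $v=u^{\ast}$ transformed via $v=r$-like coordinates $(1-u)^{-1/a}$, as in Agostiniani--Mazzieri-type analyses, and prove $|\nabla v-\nabla r|=O(r^{-\tilde\tau})$ by an iteration on dyadic annuli.
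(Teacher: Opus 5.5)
There are two genuine gaps: one at the boundary and one at leading order.

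\emph{The boundary.} Reflecting in the asymptotic chart does not give a $C^{0,1}$ metric. The isometric reflection $\tilde g=R^*g$, with $R(x',x_3)=(x',-x_3)$, satisfies $\tilde g_{i3}(x',0^-)=-g_{i3}(x',0^+)$ for $i=1,2$. Nothing forces $g_{i3}=0$ on $\{x_3=0\}$; indeed $g_{i3}$ there is exactly what enters the boundary term of $\mathfrak m_{ABL}$. So $\tilde g$ jumps across the plane. $\tilde u$ is still a weak solution, but your radial barriers in step 1 are no longer weak super/subsolutions. For radial $w$, the distribution $\operatorname{div}_{\tilde g}(|\nabla w|^{p-2}\nabla w)$ acquires a singular part on $\{x_3=0\}$ whose density is, up to sign, $2\sqrt{\det g}\,|\nabla w|_g^{p-2}g^{3i}\frac{x^i}{r}w'(r)$. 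This has size $r^{-\tau}|w'|^{p-1}$ and no sign, and no radial correction $\pm Br^{-a-\tilde\tau}$ can dominate a measure concentrated on the plane. Equivalently, on the half-space a radial $w$ has $\mu(w)\approx\sigma^{3i}\frac{x^i}{r}w'\neq0$ on $\hat\partial M'$.

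This is exactly the point the paper addresses with its non-radial barriers $w^\pm=1-C_1^\pm r^{-a}\bigl(1\pm C_2r^{-\tilde\tau}(1-\frac{x^n}{r})\bigr)$. The $x^n$-dependence produces a normal derivative of fixed sign, of size $C_1^\pm C_2r^{-a-\tilde\tau-1}$, which beats $a|\sigma|r^{-a-1}$. The weak comparison is then run directly on the half-space, keeping the Neumann boundary term. Alternatively, you would first have to change coordinates near $\hat\partial M'$ so that $g_{i3}=0$ there, as your Fermi-coordinate reflection at the corner does locally. You would then need to check that the $O_2(r^{-\tau})$ decay survives.

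\emph{The leading order.} This is the more serious gap. Step 2 does not follow from step 1. The bounds $cr^{-a}\le 1-u\le Cr^{-a}$ with $c<C$, plus rescaled $C^{1,\alpha}$ estimates, give only $|\nabla u|\le C'r^{-a-1}$. They give no lower bound, no control of the direction of $\nabla u$, and no existence of $\lim r^a(1-u)$. Step 3 then linearizes about $\frac{c}{a}r^{-a}$ for a single constant $c$. That requires $w=o(r^{-a})$ and $\nabla u=c\,r^{-a-1}\partial_r(1+o(1))$, which is the leading-order content of the lemma itself.

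What is missing is a blow-down classification:
\begin{itemize}
\item The $u_R$ subconverge in $C^1_{loc}$ to positive $p$-harmonic functions on the flat half-space with Neumann condition, comparable to $|y|^{-a}$.
\item After reflection, which is now smooth because the limit metric is flat, these are positive $p$-harmonic functions on $\mathbb R^3\setminus\{0\}$. By Kichenassamy--V\'eron's isolated-singularity theorem and comparison, they equal $\kappa|y|^{-a}$.
\item The conserved flux $\int|\nabla u|^{p-2}\langle\nabla u,\nu\rangle=\operatorname{Cap}_p$ through the half-spheres is scale-invariant, since $(a+1)(p-1)=2$. It passes to the limit and forces $\kappa=\mathfrak c_p/a$ along every subsequence.
\end{itemize}
So your step 4 has to be used here, not at the end.

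Even after that, the rate is not a barrier argument. The radial linearized operator acts by $Lr^\lambda=(p-1)\mathfrak c_p^{p-2}\lambda(\lambda+a)r^{\lambda+a-3}$, so $\pm Kr^{-a-\tilde\tau}$ have the wrong sign to serve as barriers for $w$. Moreover, the kernel element $r^{-a}$ can only be excluded through the flux, which a maximum principle does not see. You need a weighted estimate that exploits the gap to the translation modes $\partial_{x_i}r^{-a}\sim r^{-a-1}$, of the Chru\'sciel type the paper invokes. Barriers alone, yours or the paper's, only trap $r^a(1-u)$ between two constants up to $O(r^{-\tilde\tau})$; closing that band is the step your argument does not supply.
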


\begin{remark}
Lemma A.2 in Mantoulidis--Miao--Tam \cite{Mantoulidis--Miao--Tam--2020} 
establishes the asymptotic behavior of harmonic functions on 
three-dimensional asymptotically flat manifolds with compact boundary. 
Later, Benatti--Fogagnolo--Mazzieri \cite{Benatti--Fogagnolo--Mazzieri--2024--MathAnn}, 
Theorem 3.1, proved the corresponding result for $p$-harmonic functions 
in the same geometric setting.

In contrast, here we consider $p$-harmonic functions on a 
three-dimensional asymptotically flat manifold with noncompact boundary. 
The main difference lies in the presence of a mixed boundary condition. 
To handle this additional difficulty, we construct suitable barrier functions
\[
w^{\pm}=1-C_{1}^{\pm}r^{-a}
\left(1\pm C_{2}r^{-\tilde{\tau}}
\left(1-\frac{x^{n}}{r}\right)\right),
\]
which allow us to control the asymptotic behavior of $u$.
\end{remark}

\begin{proof}
Applying the regularity results in \cite{DiBenedetto--1983--NATMA} and \cite[Theorems 1 and 2]{Lieberman--1988}, we know that
$u \in C^{1, \alpha}(M^{\prime}\cup \Sigma \cup \hat{\partial}M^{\prime})$.

We will construct two barrier functions to control the upper bound and lower bound of $u$ by using the weak comparison principle, so that we could get the expansion of $u$ at infinity. 

{\bf Step 1. Differences between the AF metric $g$ and flat metric $\bar g$. } 
Suppose that $g=\bar{g}+\sigma$, where $\sigma=O_{2}(|x|^{-\tau})$, $\tau>\frac{1}{2}$. 
And by calculation, we could get that 
\[
\begin{aligned}
    g^{ij}
    =& \bar{g}^{ij}-g^{ik}\sigma_{kl}\bar{g}^{lj} \\
    =& \bar{g}^{ij}-\sigma^{ij}+O(|\sigma|^{2}). \\
\end{aligned}
\]
\[
\begin{aligned}
    \det(g)
    =& \det(\bar{g}) \left[ 1+\operatorname{tr}_{\bar{g}}\sigma+O(|\sigma|^{2}) \right]. \\
\end{aligned}
\]
Define $\Gamma_{ij}^{k}$ such that 
\[
    \nabla_{\frac{\partial}{\partial x^{i}}}\frac{\partial}{\partial x^{j}}-\bar{\nabla}_{\frac{\partial}{\partial x^{i}}}\frac{\partial}{\partial x^{j}}=\Gamma_{ij}^{k}\frac{\partial}{\partial x^{k}}.
\]
Here $\Gamma_{ij}^{k}$ is not the usual Christoffel symbol. It's the difference between Levi-Civita connections with respect to two different metrics. Then, it is easy to see that
\[
\Gamma_{ij}^{k}=\frac{1}{2}\bar{g}^{kl}\left[ \nabla_{i}\sigma_{jk}+\nabla_{j}\sigma_{ik}-\nabla_{k}\sigma_{ij} \right].
\]
For any smooth function $f$,  we have the following calculations
\[
\begin{aligned}
    f_{ij}
    =& \nabla^{2}f\Big(\frac{\partial}{\partial x^{i}},\frac{\partial}{\partial x^{j}}\Big) 
    = \frac{\partial^2f}{\partial x^{i}\partial x^{j}}-\nabla_{\frac{\partial}{\partial x^{i}}}\frac{\partial}{\partial x^{j}}\Big(f\Big) \\
    =& \overline{\nabla}^{2}f\Big(\frac{\partial}{\partial x^{i}},\frac{\partial}{\partial x^{j}}\Big)-\Gamma_{ij}^{k}\frac{\partial}{\partial x^{k}}f 
    = (\overline{\nabla}^{2}f)_{ij}-\Gamma_{ij}^{k}f_{k}. \\
\end{aligned}
\]
\[
\begin{aligned}
    \Delta f-\bar{\Delta}f
    =& -f_{ij}\sigma^{ij}
    -g^{ij}\Gamma_{ij}^{k}f_{k}
    +O(|\bar{\nabla}^{2}f||\sigma^{2}|)+O(|\bar{\nabla}f||\sigma||\nabla\sigma|) \\
    =& -\langle \bar{\nabla}^{2}f,\sigma \rangle
    -\langle \operatorname{div}_{g}\sigma-\frac{1}{2}d\operatorname{tr}_{g}\sigma, df \rangle
    +O(|\bar{\nabla}^{2}f||\sigma^{2}|)+O(|\bar{\nabla}f||\sigma||\nabla\sigma|). \\
\end{aligned}
\]
Besides,
\[
\begin{aligned}
    |\nabla f|^{\alpha}
%    =& (|\nabla f|^{2})^{\frac{\alpha}{2}} \\
%    =& (g^{ij}f_{i}f_{j})^{\frac{\alpha}{2}} \\
%    =& \Big( (\bar{g}^{ij}-\sigma^{ij}+O(|\sigma|^{2}) )f_{i}f_{j} \Big)^{\frac{\alpha}{2}} \\
%    =& \Big( |\bar{\nabla}f|_{\bar{g}}^{2}-\sigma^{ij}f_{i}f_{j}+O(|\sigma|^{2}|\bar{\nabla}f|_{\bar{g}}^{2}) \Big)^{\frac{\alpha}{2}} \\
    =& |\bar{\nabla}f|_{\bar{g}}^{\alpha} 
    \Bigg( 
    1-\frac{\alpha}{2}\sigma^{ij}\frac{f_{i}}{|\bar{\nabla}f|_{\bar{g}}}\frac{f_{j}}{|\bar{\nabla}f|_{\bar{g}}}+O(|\sigma|^{2}) 
    \Bigg). \\
\end{aligned}
\]
\[
\begin{aligned}
    \nabla^{2}f(\nabla f, \nabla f)
%    =& f_{ij}f^{i}f^{j} \\
%    =& \Big( (\overline{\nabla}^{2}f)_{ij}-\Gamma_{ij}^{k}f_{k} \Big)f^{i}f^{j} \\
%    =& \Big( (\overline{\nabla}^{2}f)_{ij}-\Gamma_{ij}^{k}f_{k} \Big) g^{is}f_{s}g^{jm}f_{m} \\
%    =& \Big( (\overline{\nabla}^{2}f)_{ij}-\Gamma_{ij}^{k}f_{k} \Big) (\bar{g}^{is}-\sigma^{is}+O(r^{-2\tau}))f_{s}(\bar{g}^{jm}-\sigma^{jm}+O(r^{-2\tau}))f_{m} \\
    =& \overline{\nabla}^{2}f(\bar{\nabla}f,\bar{\nabla}f)-\bar{g}^{is}\bar{g}^{jm}\Gamma_{ij}^{k}f_{k}f_{s}f_{m}-2(\overline{\nabla}^{2}f)_{ij}\bar{g}^{is}\sigma^{jm}f_{s}f_{m} \\
    & +O(|\overline{\nabla}^{2}f||\bar{\nabla}f|^{2}|\sigma|^{2})
    +O(|\bar{\nabla}f|^{3}|\sigma||\nabla \sigma|). \\
\end{aligned}
\]

{\bf Step 2.  Barrier function and its derivative. }
In the coordinate chart at infinity, $\{ x^{1}, x^{2}, \ldots, x^{n} \}$.
We will use the following convention: tangential indices 
$i, j, k$ take values $1, \ldots, n-1$, while full indices 
$\alpha, \beta, \gamma$ range over $1, \ldots, n$.

Recall that on $\hat{\partial}M'=M'\cap\{x|x^{n}=0\}$, 
the outward unit normal vector field $\mu$ satisfies that
\[
\left\{
\begin{aligned}
    g(\mu,\frac{\partial}{\partial x^{i}})=& 0, \quad i=1,\ldots,n-1 \\
    g(\mu,\mu)=& 1, \\
    g(\mu,\frac{\partial}{\partial x^{n}})<&0. \\
\end{aligned}
\right.
\]
Thus
\[
\mu=-(g^{nn})^{-\frac{1}{2}}g^{n\alpha}\frac{\partial}{\partial x^{\alpha}}
=-(1-\frac{1}{2}\sigma^{nn}+O(|\sigma|^{2}))\frac{\partial}{\partial x^{n}}+(\sigma^{ni}+O(|\sigma|^{2}))\frac{\partial}{\partial x^{i}}.
\]

Then we consider the barrier function 
\[
w^{\pm}=1-C_{1}^{\pm}r^{-a}(1\pm C_{2}r^{-\tilde{\tau}}(1-\frac{x^{n}}{r})),
\]
here $a=\frac{n-p}{p-1}$ and $0<\tilde{\tau}<\min\{\tau, 1\}$. Besides, $r_{1}$ is a large constant, 
$C_{1}^{-}=r_{1}^{a}(1-\min\limits_{x\in S_{r_{1}}}u(x))$, $C_{1}^{+}=r_{1}^{a}(1-\max\limits_{x\in S_{r_{1}}}u(x))$,
$C_2$ is large enough such that when $r\ge r_{1}$, $C_{2}r^{-\tilde{\tau}}\ge a|\sigma|$. (Notice that $\sigma=O_{2}(r^{-\tau})$.)

Firstly, we calculate the normal derivative along $\mu$ on $\hat{\partial}M'$. 
\[
\begin{aligned}
    \mu(w^{\pm})
    =& -(1-\frac{1}{2}\sigma^{nn}+O(|\sigma|^{2}))\frac{\partial w^{\pm}}{\partial x^{n}}+(\sigma^{ni}+O(|\sigma|^{2}))\frac{\partial w^{\pm}}{\partial x^{i}} \\
    =& -C_{1}^{\pm}
    \Big( - (\pm C_{2})
    r^{-a-\tilde{\tau}-1}-ar^{-a-1}\sigma^{ni}\frac{x^{i}}{r}+O(r^{-a-\tilde{\tau}-\tau-1})
    \Big)
\end{aligned}
\]
i.e. $\mu(w^{+})\ge0$, $\mu(w^{-})\le 0$.

Next, the first derivative of $w^{\pm}$ is

% Gradient (first derivatives)
\[
\begin{aligned}
    \partial_i w^{\pm}
    =& C_{1}^{\pm} a r^{-a-1} \frac{x^{i}}{r} 
    + C_{1}^{\pm}(\pm C_{2}) r^{-a-\tilde{\tau}-1}
    \Big(
    \delta_{i n} 
    + (a+\tilde{\tau}) \frac{x^{i}}{r}
    - (a+\tilde{\tau}+1) \frac{x^{i}}{r}\frac{x^{n}}{r}
    \Big). \\
\end{aligned}
\]

\[
|\bar{\nabla} w^{\pm}|_{\bar{g}}
=aC_{1}^{\pm}r^{-a-1}
\Big(
1+(1+\frac{\tilde\tau}{a})(\pm C_{2})r^{-\tilde{\tau}}(1-\frac{x^{n}}{r})+O(r^{-2\tilde{\tau}})
\Big).
\]

Then, the second derivative of $w^{\pm}$ is 
\[
\begin{aligned}
    \partial_i\partial_j w^{\pm}
    =& C_{1}^{\pm} a r^{-a-2}
    \Big( \delta_{ij} - (a+2) \frac{x^{i}}{r}\frac{x^{j}}{r} \Big) \\
    & + C_{1}^{\pm}(\pm C_{2})(a+\tilde{\tau}) r^{-a-\tilde{\tau}-2}
    \big( \delta_{ij}
    - (a+\tilde{\tau}+2)\frac{x^{i}}{r}\frac{x^{j}}{r} \big) \\
    & - C_{1}^{\pm}(\pm C_{2})(a+\tilde{\tau}+1) r^{-a-\tilde{\tau}-2}
    \big( \delta_{jn} \frac{x^{i}}{r}
    + \delta_{in} \frac{x^{j}}{r}
    + \delta_{ij} \frac{x^{n}}{r} \big) \\
    & + C_{1}^{\pm}(\pm C_{2}) (a+\tilde{\tau}+1)(a+\tilde{\tau}+3) r^{-a-\tilde{\tau}-2}\frac{x^{i}}{r}\frac{x^{j}}{r}\frac{x^{n}}{r}. \\
\end{aligned}
\]
By taking trace,  we have
\[
\begin{aligned}
    \overline{\Delta}w^{\pm}
    =& C_{1}^{\pm} a r^{-a-2}
    \Big( n-a-2 \Big) \\
    & + C_{1}^{\pm}(\pm C_{2})(a+\tilde{\tau}) r^{-a-\tilde{\tau}-2}
    \big( n-a-2-\tilde{\tau} \big) \\
    & - C_{1}^{\pm}(\pm C_{2})(a+\tilde{\tau}+1) r^{-a-\tilde{\tau}-2}
    \big( n-a-1-\tilde{\tau} \big) \frac{x^{n}}{r}. \\
\end{aligned}
\]
Besides, we could get that
\[
\begin{aligned}
    \bar{\nabla}^{2}w^{\pm}(\bar{\nabla}w^{\pm},\bar{\nabla}w^{\pm})
    =& \partial_i\partial_j w^{\pm}\partial_i w^{\pm}\partial_j w^{\pm} \\
    &= 
    -\,(C_{1}^{\pm})^{3}\,a^{3}(a+1)\; r^{-3a-4}
    \\[6pt]
    &\quad - (\pm C_{2})\,(C_{1}^{\pm})^{3}\,a^{2}(a+\tilde{\tau})(3a+3+\tilde{\tau})\; r^{-3a-\tilde{\tau}-4}\,
    \Big( 1- \frac{x^{n}}{r} \Big)
    \\[6pt]
    &\quad + O\!\big((C_{1}^{\pm})^{3}C_{2}^{2}r^{-3a-2\tilde{\tau}-4}\big). \\
\end{aligned}
\]
Thus 
\[
\begin{aligned}
    \bar{\Delta}_{p}w^{\pm}
    =& |\bar{\nabla} w^{\pm}|_{\bar{g}}^{p-2}\bar{\Delta}w^{\pm}
    +(p-2)|\bar{\nabla} w^{\pm}|_{\bar{g}}^{p-4}
    \bar{\nabla}^{2}w^{\pm}(\bar{\nabla}w^{\pm},\bar{\nabla}w^{\pm}) \\
    =& - (C_{1}^{\pm})^{p-1}(\pm C_{2})a^{p-2}r^{-(a+1)(p-1)-1} \times \\
    & \times 
    \Bigg(
    (a+\tilde{\tau})(p-1)\tilde{\tau}(1-\frac{x^{n}}{r})r^{-\tilde{\tau}}+(n-1)\frac{x^{n}}{r}r^{-\tilde{\tau}}+O(r^{-2\tilde{\tau}})
    \Bigg). \\
\end{aligned}
\]

{\bf Step 3. Error due to the asymptotic flatness. }
 We consider the difference between $\Delta_{p}w$ and $\overline{\Delta}_{p}w$.
Notice that
\begin{align*}
    \Delta_{p}w
    =& |\nabla w|^{p-2}\Delta w+(p-2)|\nabla w|^{p-2}\nabla^{2}w
    \Big( \frac{\nabla w}{|\nabla w|},\frac{\nabla w}{|\nabla w|} \Big) \\
    % =& \overline{\Delta}_{p}w
    % +|\nabla w|^{p-2}( \Delta w-\overline{\Delta}w )
    % +(|\nabla w|^{p-2}-|\bar{\nabla} w|^{p-2}) \overline{\Delta}w \\
    % & +(p-2)|\nabla w|^{p-4} \Big( \nabla^{2}w (\nabla w, \nabla w) -\bar{\nabla}^{2}w (\bar{\nabla} w, \bar{\nabla} w)\Big) \\
    % & +(p-2)(|\nabla w|^{p-4}-|\bar{\nabla} w|^{p-4})\bar{\nabla}^{2}w (\bar{\nabla} w, \bar{\nabla} w). \\
\end{align*}
It is easy to see that
\begin{align*}
    \Delta_{p}w
    - \overline{\Delta}_{p}w
    =& |\bar{\nabla} w|_{\bar{g}}^{p-2}( \Delta w-\overline{\Delta}w ) 
     +(|\nabla w|^{p-2}-|\bar{\nabla} w|_{\bar{g}}^{p-2}) \overline{\Delta}w \\
    & +(p-2)|\bar{\nabla} w|_{\bar{g}}^{p-4} \Big( \nabla^{2}w (\nabla w, \nabla w) -\bar{\nabla}^{2}w (\bar{\nabla} w, \bar{\nabla} w)\Big) \\
    & +(p-2)(|\nabla w|^{p-4}-|\bar{\nabla} w|_{\bar{g}}^{p-4})\bar{\nabla}^{2}w (\bar{\nabla} w, \bar{\nabla} w) \\
    & +(|\nabla w|^{p-2}-|\bar{\nabla} w|_{\bar{g}}^{p-2})( \Delta w-\overline{\Delta}w ) \\
    & +(p-2)(|\nabla w|^{p-4}-|\bar{\nabla} w|_{\bar{g}}^{p-4})\Big( \nabla^{2}w (\nabla w, \nabla w) -\bar{\nabla}^{2}w (\bar{\nabla} w, \bar{\nabla} w)\Big).
\end{align*}

Thus when $r$ large enough,  we can see that
\[
\begin{aligned}
    |\Delta_{p}w^{\pm}-\bar{\Delta}_{p}w^{\pm}|
    & \le (C_{1}^{\pm})^{p-1}a^{p-2}r^{-(a+1)(p-1)}  
     \times \Bigg( C(n,p,\tilde{\tau})
    \Big(
    r^{-1}|\sigma|+|\nabla\sigma|
    \Big) 
    %+O(r^{-\tau-\tilde{\tau}})
    \Bigg), \\
\end{aligned}
\]
where $C(n,p,\tilde{\tau})$ is a constant with respect to $n$, $p$ and $\tilde{\tau}$, independent of $C_{1}^{\pm}$ and $C_{2}$.

By taking $C_{2}$ large enough, we can get that
\[
\Delta_{p}w^{+}\le \bar{\Delta}_{p}w^{+}+|\Delta_{p}w^{+}-\bar{\Delta}_{p}w^{+}|\le0,
\]
and
\[
\Delta_{p}w^{-}\ge \bar{\Delta}_{p}w^{-}-|\Delta_{p}w^{-}-\bar{\Delta}_{p}w^{-}|\ge0.
\]

{\bf Step 4. Application of the weak comparison. }
Define that $\phi=\max\{u-w^{+},0\}$. By exploiting the asymptotic behavior derived previously, the following integrals can be shown to be integrable.
\[
\begin{aligned}
    0& \le \int_{\{ \phi>0 \}}\phi(\Delta_{p}u-\Delta_{p}w^{+})
    +g\Bigg( |\nabla u|^{p-2}\nabla u-|\nabla w^{+}|^{p-2}\nabla w^{+}, \nabla \phi \Bigg) \\
    =& 
    \int_{\{ \phi>0 \}}\operatorname{div}(\phi|\nabla u|^{p-2}\nabla u-\phi|\nabla w^{+}|^{p-2}\nabla w^{+}) \\
    =& \int_{\{ \phi>0 \}\cap S_{r_{1}}} \phi \cdot g\Bigg( |\nabla u|^{p-2}\nabla u-|\nabla w^{+}|^{p-2}\nabla w^{+}, -\nu \Bigg) \\
    &+ \int_{\{ \phi>0 \}\cap \hat{\partial}M'} \phi \cdot g\Bigg( |\nabla u|^{p-2}\nabla u-|\nabla w^{+}|^{p-2}\nabla w^{+}, \mu \Bigg) \\
    =& \int_{\{ \phi>0 \}\cap \hat{\partial}M'} -\phi|\nabla w^{+}|^{p-2} \mu(w^{+}) \\
    \le& 0. \\
\end{aligned}
\]
Thus, we have $\phi \equiv 0$, which implies $u \le w^+$. 
Similarly, one can show that $w^- \le u$. 
Consequently, the \(C^{0}\) asymptotic expansion of \(u\) follows directly from the definition of the \(p\)-capacity. The corresponding \(C^{2}\) estimates can be established by a similar argument to that in \cite{Chrusciel--1990}.
\end{proof}

{
We next establish the connectedness of the regular level sets, following the approach of Huisken–Ilmanen (\cite{Huisken--Ilmanen--2001}, Lemma $4.2$) and Koerber (\cite{Koerber--2023}, Lemma $3.7$). This result will be useful in our application of the Gauss–Bonnet formula.
\begin{lemma}\label{lem:2.3}
    Under the assumption of Theorem \ref{Thm:1.02}. Assume that $M'$ is simply connected and $\Sigma$ is connected.
    Let $u$ be a solution to \eqref{p-H}. Then there is no strict local maxima or minima in the interior of $M'$. Besides, the level set $\Sigma_{t}$ of $u$ is a connected free-boundary hypersurface. 
\end{lemma}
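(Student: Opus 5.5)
The argument has three parts: exclude interior extrema by the strong maximum principle, extend this to the free boundary $\hat\partial M'$ by a Hopf lemma, and then prove connectedness of regular level sets topologically, using that $M'$ is simply connected.

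\textbf{No interior extrema.} Suppose $u$ had a strict local maximum or minimum at some point $x_0$ in the interior of $M'$. Then we can find a small neighborhood $U$ of $x_0$ and a value $t$ with $\{u>t\}\cap U$ (or $\{u<t\}\cap U$) a nonempty open set $V$ compactly contained in $U$, with $u=t$ on $\partial V$. The comparison principle for $\Delta_p$ (Lemma 2.2 of \cite{Ma--Yang--Yin--2025}, already used above), applied on $V$ with the constant $t$ as comparison function, forces $u\equiv t$ on $V$. This contradicts strictness.

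\textbf{Extrema on the free boundary.} A local extremum at a point of $\hat\partial M'$ can be excluded as well. By Lemma \ref{lem:2.2}, $u\in C^{1,\alpha}$ up to $\hat\partial M'$, so the Hopf boundary lemma for $p$-harmonic functions applies there. It would give $\langle\nabla u,\mu\rangle\neq0$ at such a point, contradicting the Neumann condition. The only place where $u$ attains its global extreme values is therefore $\Sigma$, where $u=0$, and infinity, where $u\to1$. In particular $0<u<1$ in $M'$.

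\textbf{Connectedness of regular level sets.} Fix a regular value $t\in(0,1)$. Then $\Sigma_t$ is a properly embedded $C^{1,\alpha}$ (in fact smooth away from critical points) compact hypersurface. It meets $\hat\partial M'$ orthogonally, because the Neumann condition makes $\nabla u$ tangent to $\hat\partial M'$; hence it is a free boundary surface. Compactness follows from the expansion in Lemma \ref{lem:2.2}: for $r$ large, $u$ is close to $1-\frac{\mathfrak c_p}{a}r^{-a}$, so $\Sigma_t$ is bounded.

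Suppose $\Sigma_t$ had two components $S_1$ and $S_2$. Since $M'$ is simply connected, $H_1(M',\hat\partial M')$-type intersection arguments show that each free boundary component separates $M'$. Concretely, a closed loop crossing $S_1$ exactly once would give a nontrivial element of $\pi_1$, following Huisken--Ilmanen Lemma 4.2 and, for the relative version, Koerber Lemma 3.7. So $M'\setminus\Sigma_t$ has at least three components.

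Consider the sets $\{u<t\}$ and $\{u>t\}$. Each component of $\{u<t\}$ must touch $\Sigma$; otherwise $u$ attains an interior or free-boundary minimum on its closure (on which $u=t$ at the interior boundary), which the previous steps exclude. Since $\Sigma$ is connected, $\{u<t\}$ is connected. Each component of $\{u>t\}$ must be unbounded, or else it contains a maximum, which is again excluded. Since $M'$ has one end and the asymptotics show that $\{u>t\}$ contains the whole connected region $\{r\ge R\}\cap M'$, the set $\{u>t\}$ is connected. So $M'\setminus\Sigma_t$ has exactly two components. Both $S_1$ and $S_2$ would then be adjacent to the same two components, and a loop crossing $S_1$ once (passing from one component to the other through $S_1$ and returning through $S_2$) has intersection number $1$ with $S_1$. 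This contradicts simple connectivity, because the intersection number with a properly embedded two-sided free boundary surface is a homotopy invariant.

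\textbf{Main obstacle.} The delicate step is this topological one in the presence of the noncompact boundary. The intersection-number argument must be applied to surfaces with boundary on $\hat\partial M'$. I would handle it by doubling $M'$ across $\hat\partial M'$: the doubled level set is a closed surface in the double. Alternatively, I would perturb loops to stay in the interior and use that $S_1$ is properly embedded, so that mod-2 intersection with closed loops in the interior of $M'$ is well defined and homotopy invariant.
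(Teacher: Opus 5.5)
Your proposal is correct and follows essentially the same route as the paper. Both arguments exclude interior extrema (you via the comparison principle, the paper via an energy truncation of $u$), use Hopf's lemma against the Neumann condition on $\hat\partial M'$, deduce that $\{u<t\}$ and $\{u>t\}$ are connected, and conclude with a loop meeting one component of $\Sigma_t$ exactly once, contradicting simple connectedness.

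Two small points to tighten. First, in the connectedness step you must exclude possibly non-strict extrema on closures of bounded components. Your strict-point argument does not give this; it follows from the strong maximum principle, or from applying the comparison principle directly on that component. Second, add the paper's intermediate-value argument along the connected $\hat\partial M'$, where $u$ runs from $0$ at $\partial\Sigma$ to $1$ at infinity, to show $\Sigma_t\cap\hat\partial M'\neq\emptyset$. This nonempty intersection is required for $\Sigma_t$ to be a free boundary surface.
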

\begin{proof}
    \textbf{Step 1. There are no strict local maxima or minima.}
    
    Suppose that $u$ attains a strict local maximum $u_{\mathrm{slmax}}$ on a connected, precompact set $E \subset\subset M'$. Then for any regular value $t_{\max}$ of $u$ with $t_{\max} < u_{\mathrm{slmax}}$, we have $E \subset \{u \ge t_{\max}\}$. Define
    \[
    v =
    \begin{cases}
    t_{\max}, & \text{on } \{u \ge t_{\max}\},\\[2pt]
    u, & \text{on } \{u < t_{\max}\}.
    \end{cases}
    \]
    Then $v \in \mathcal{A}$, and
    \[
    0 = \int_{\{u \ge t_{\max}\}} |\nabla v|^{p}
    \;\ge\; \int_{\{u \ge t_{\max}\}} |\nabla u|^{p}.
    \]
    Hence $|\nabla u| = 0$ a.e.\ on $\{u \ge t_{\max}\}$, so $u = t_{\max}$ on $E$, a contradiction. The same argument shows that $u$ has no strict local minimum.
    
    \textbf{Step 2. The set $\{u > t\}$ is connected.}
    
    Suppose, to the contrary, that $\{u > t\}$ is not connected. Since $M'$ has only one end, there exists a connected component $E'$ of $\{u > t\}$ that does not contain the end at infinity. Because $u$ has no strict local maximum, $E'$ must intersect the boundary $\widehat{\partial} M'$.
    
    However, since $\langle \nabla u, \mu \rangle = 0$ on $\widehat{\partial} M'$, the function $u$ cannot attain a strict local maximum on $\widehat{\partial} M'$; otherwise this would contradict Hopf's Lemma. Therefore $E'$ is a connected precompact region with boundary, and $u$ must attain a strict local maximum inside $E'$. By Step~1, this is impossible.
    
    \textbf{Step 3. The set $\{u < t\}$ is connected.}
    
    Suppose that $\{u < t\}$ is not connected. Then there exists a connected component $E''$ that does not contain $\Sigma$. As in Step~2, $E''$ is a connected, precompact region with boundary, and $u$ attains a strict local minimum in $E''$, which is impossible by Step~1.
    
    \textbf{Step 4. The regular level set $\{u = t\}$ is connected.}
    
    Suppose instead that $\{u = t\}$ is not connected, and let $E_{1}$ and $E_{2}$ be two distinct connected components. Choose three points:
    \[
    p_{0} \in \Sigma,\qquad p_{1} \in E_{1}, \qquad p_{2} \in E_{2}.
    \]
    By the connectedness of $\{u < t\}$, $\{u > t\}$, $E_{1}$, and $E_{2}$, we can find curves
    \[
    \gamma_{01} \subset \{u < t\} \text{ joining } p_{0} \text{ to } p_{1}, \qquad
    \gamma_{12} \subset \{u > t\} \text{ joining } p_{1} \text{ to } p_{2},
    \]
    and
    \[
    \gamma_{20} \subset \{u < t\} \text{ joining } p_{2} \text{ to } p_{0}.
    \]
    These three curves form a closed loop based at $p_{0}$ that intersects $E_{1}$ exactly once. By intersection theory, such a loop cannot be homotoped to a point, contradicting the simple connectedness of $M'$. Hence the regular level set $\{u = t\}$ must be connected.
    
    \textbf{Step 5. The regular level set $\{u = t\}$ is a free–boundary hypersurface.}
    
    Suppose not. Then
    \[
    \{u = t\} \cap \widehat{\partial} M' = \emptyset.
    \]
    But this is impossible, since $u$ is continuous and $\widehat{\partial} M'$ is connected. Hence the regular level set $\{u = t\}$ must intersect $\widehat{\partial} M'$, and therefore it is a free–boundary hypersurface, since $\langle \nabla u, \mu \rangle = 0$ on $\widehat{\partial} M'$.

\end{proof}

\begin{remark}
    If we replace the simply connected assumption by $b_{1}(M') = 0$, then together with $H > 0$, we may be able to prove that $M'$ is simply connected by referring to Oronzio \cite{Oronzio--2025} and Koerber \cite{Koerber--2023}.
\end{remark}

}
%%%%%%%%%%%%%%%%%%%%%%%%%%%%
\section{$p$-capacity of Schwarzschild half space}\label{Sec:03}
		
Given $m\in\mathbb{R}$ and {$r_0\geq |m|$. Let  $(\mathcal{M}_{m, r_0}^{+,3},   g_{m})$ be the spatial Schwarzschild half space  of mass $m$ outside a rotationally symmetric half sphere, given by \eqref{schwarzschild}. Here $m$ can be taken to be a negative real number, we still call it the Schwarzschild half space.  Now we will follow Xia-Yin-Zhou's idea to calculate the $p$-capacitary function in $(\mathcal{M}_{m, r_0}^{+,3},   g_{m})$ (for detail see \cite{Xia--Yin--Zhou--2024}).
		\begin{proposition} \label{Appendix: basic_solution}
			Let   $(\mathcal{M}_{m, r_0}^{3,+},   g_{m})$ be the Schwarzschild half space of mass $m$  outside a rotationally symmetric half-sphere given by \eqref{schwarzschild} and $u$ be the solution to \eqref{p-H}. Then  $u(x)= f_{m, r_0}(r), r=|x|$, where $f_{m, r_0}$ is given by
			\begin{eqnarray*}
				f_{m, r_0}(r)&:=& 1 -\int_{r}^{\infty} \mathfrak{c}_{p} s^{-a-1}(1+\frac{m}{s})^{-2a}ds
				=1-\frac{I_{a}(\frac{m}{r})}{I_{a}(\frac{m}{r_{0}})},
			\end{eqnarray*}
			where $\mathfrak{c}_{p}$ and $m$ can be related by
			\begin{equation} \label{equ:m}
				|m| =  \left( \mathfrak{c}_{p}I_{a}(\frac{m}{r_{0}}) \right)^{\frac{1}{a}}.
			\end{equation}
		\end{proposition}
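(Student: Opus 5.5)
The plan is to guess a radial solution, check that it solves \eqref{p-H} on the Schwarzschild half space, and then invoke uniqueness from the existence lemma of Section~\ref{Sec:02}.

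\textbf{Step 1: reduce the radial $p$-Laplacian to an ODE.} Write $g_m=\phi^4\bar g$ with $\phi=1+\frac{m}{r}$. For $u=f(r)$ we have $|\nabla u|=\phi^{-2}|f'|$ and $dV=\phi^6\,dV_{\bar g}$. Hence
\[
\Delta_p u=\phi^{-6}\,\overline{\operatorname{div}}\bigl(\phi^{6}\phi^{-2(p-2)}\phi^{-4}|f'|^{p-2}f'\,\partial_r\bigr)=\phi^{-6}r^{-2}\bigl(r^2\phi^{2(2-p)}|f'|^{p-2}f'\bigr)'.
\]
So $p$-harmonicity of a radial function means $r^2\phi^{2(2-p)}|f'|^{p-1}=\mathrm{const}$. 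Solving with $f'>0$ gives
\[
f'(r)=C\,r^{-\frac{2}{p-1}}\phi^{\frac{2(p-2)}{p-1}}=C\,r^{-a-1}\Bigl(1+\frac{m}{r}\Bigr)^{-2a},
\]
using $\frac{2}{p-1}=a+1$ and $\frac{2(p-2)}{p-1}=-2a$ (indeed $a+1=\frac{2}{p-1}$ and $2a=\frac{2(3-p)}{p-1}$, so $-2a=\frac{2(p-3)}{p-1}$). I will double check this exponent carefully; it is the one delicate bookkeeping point. Integrating from $\infty$ with $f(\infty)=1$ gives the stated form. The substitution $s\mapsto \frac{|m|}{s}$ turns $\int_r^\infty s^{-a-1}(1+\frac{m}{s})^{-2a}ds$ into $|m|^{-a}I_a(\frac{m}{r})$, with the $\operatorname{sgn}(k)$ inside $I_a$ handling negative $m$.

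\textbf{Step 2: boundary conditions.} On the flat part $\hat\partial\mathcal M=\{x_3=0,\ |x|\ge r_0\}$ the conformal normal is parallel to $\partial_{x_3}$, which is tangent to the spheres $|x|=r$. Hence $\langle\nabla u,\mu\rangle=0$ for every radial $u$. The Dirichlet condition $u=0$ on $S^2_{+,r_0}$ forces $C|m|^{-a}I_a(\frac{m}{r_0})=1$. This yields the formula $f_{m,r_0}=1-\frac{I_a(m/r)}{I_a(m/r_0)}$, and $u\to1$ at infinity holds by construction.

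\textbf{Step 3: identify $C$ with $\mathfrak c_p$.} Compute the capacity by the flux through a coordinate half sphere $\{r=t\}$. Its $g_m$-area is $2\pi t^2\phi^4$, so
\[
\operatorname{Cap}_p=\int_{\{u=f(t)\}}|\nabla u|^{p-1}=2\pi t^2\phi^4\phi^{-2(p-1)}(f')^{p-1}=2\pi C^{p-1}\bigl(t^{2}\phi^{2(3-p)}t^{-2}\phi^{-2(3-p)}\bigr)=2\pi C^{p-1}.
\]
Therefore $C=\mathfrak c_p$, and the normalization from Step 2 becomes \eqref{equ:m}. Alternatively, one can compare with the asymptotic expansion of Lemma~\ref{lem:2.2}: since $\phi\to1$, the leading term $-\frac{C}{a}r^{-a}$ gives $C=\mathfrak c_p$ directly.

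\textbf{Step 4: conclude by uniqueness.} The radial function is a classical, hence weak, solution of \eqref{p-H}. By the uniqueness part of the first lemma in Section~\ref{Sec:02}, it coincides with $u$.

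The main obstacle is purely computational: getting the conformal weights right in Step~1, so that the exponent comes out to exactly $-2a$ and the flux in Step~3 is $r$-independent.
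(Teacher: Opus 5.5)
Your argument is correct once one arithmetic slip in Step~1 is fixed. The weight is $\phi^{6}\phi^{-2(p-2)}\phi^{-4}=\phi^{2(3-p)}$, not $\phi^{2(2-p)}$. The identity $\frac{2(p-2)}{p-1}=-2a$ that you invoke is false, as your own parenthetical $-2a=\frac{2(p-3)}{p-1}$ shows. With the correct weight, $r^{2}\phi^{2(3-p)}(f')^{p-1}=\mathrm{const}$ gives exactly $f'=C\,r^{-a-1}(1+\frac{m}{r})^{-2a}$. This is the same weight $\phi^{4}\phi^{-2(p-1)}=\phi^{2(3-p)}$ that makes your flux in Step~3 independent of $t$.

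Your ingredients are the same as the paper's:
\begin{itemize}
\item the conformal transformation of the $p$-Laplacian and the resulting radial ODE;
\item the identification $C=\mathfrak{c}_p$ via the flux through a coordinate half-sphere;
\item the normalization $u=0$ at $r=r_0$.
\end{itemize}
The logic, however, runs in the opposite direction. The paper first asserts $u=f(r)$ from ``rotational symmetry plus uniqueness'' and then solves the ODE. You instead build the radial solution, check all boundary conditions, and only then appeal to uniqueness.

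Your ordering is the more careful one. The isometries of the Schwarzschild half-space preserving the boundary data form only $O(2)$, acting in the $(x_1,x_2)$-plane. So symmetry plus uniqueness by itself gives only axial symmetry $u=u(\sqrt{x_1^2+x_2^2},x_3)$, not dependence on $|x|$ alone. You verify explicitly that $\partial_{x_3}$ is tangent to $\{|x|=r\}$ along $\{x_3=0\}$, so the Neumann condition holds for every radial function. Combined with uniqueness, this closes that point without any reflection argument.
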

		%%%%%%
		\begin{proof}
			Since $(\mathcal{M}_{m, r_0}^{3,+},   g_{m})$ is rotationally symmetric and the solution to  \eqref{p-H} is unique, $u$ is rotationally symmetric, i.e. there exists some $f(r)$ such that $u(x)=f(r)$.
			The Euclidean $p$-Lapacian on $u$ gives
			$$
			\bar{\Delta}_{p} u
			= \left( f'(r) \right)^{p - 2} \left[ (p - 1) f''(r) + 2 \frac{1}{r}f'(r) \right].
			$$
			Let $w = 1 + \frac{m}{ r}$ so that $ g=w^4\bar g$. By using the transformation formula for $p$-Laplacian under conformal change,
			we get that
			\begin{eqnarray*}
				\Delta_{p} u&=& w^{- 2p} \left( \bar{\Delta}_{p} u + 2(3 - p) |\bar{\nabla} u|^{p - 2}_{\bar{g}} \<\bar{\nabla}u, \bar\nabla\ln w\> \right)
				\\&=& w^{- 2p} (f'(r))^{p - 2} \left[ (p - 1) f''(r) + 2 \frac{1}{r}f'(r) + 2(3 - p) f'(r) \partial_{r}(\operatorname{ln} w) \right].
			\end{eqnarray*}
			It follows from $\Delta_{p} u = 0$ that
			$$f'(r) = C r^{-a-1}\left(1+\frac{m}{r}\right)^{-2a},$$ where $C$ is a positive constant. We claim that $C=\mathfrak{c}_{p}$. Indeed,
			since $$|\nabla u|_{g} =  w^{-2} |\bar{\nabla} u|_{\bar{g}} = w^{-2} f'(r),$$ we see
			$$
			\operatorname{Area}_{g} (\{r=r_0\})
			= \int_{\{ r = r_{0} \}}  d \sigma
			= \int_{\{ r = r_{0} \}} w^{4} d \bar{\sigma}
			= (w(r_{0}))^{4} 2 \pi r_{0}^{2}.
			$$
			%Here $d \bar{\sigma}$ is the Euclidean volume form.
			It follows that
			\begin{eqnarray*}
				\mathfrak{c}_{p}= \left( \frac{1}{2\pi} \int_{\partial \mathcal{M}_{m, r_0}} |\nabla u|_{g}^{p-1} d \sigma \right)^{\frac{1}{p-1}}
				%= \left( \frac{1}{4 \pi} \int_{\{ r = r_{0} \}} \left( w^{-2} f'(r) \right)^{p-1} w^{4} d \bar{\sigma} \right)^{\frac{1}{p-1}}
				= (w(r_{0}))^{ 2\frac{3-p}{p-1}} r_{0}^{\frac{2}{p-1}} f'(r_{0})
				= C.
			\end{eqnarray*}

			Since $u \rightarrow 1$ as $r \rightarrow \infty$, we get
			\begin{equation*}
				f(r) = 1 - \int_{r}^{+ \infty} \mathfrak{c}_{p} t^{-a-1}\left(1+ \frac{m}{t}\right)^{-2a} d t.
			\end{equation*}
			By a change of variable $s=\frac{|m|}{t}$, we see that
			\begin{equation*}
				f(r) = 1 - |m|^{-a} \mathfrak{c}_{p}\int_{0}^{\frac{|m|}{r}}  s^{a-1}(1+\operatorname{sgn}(m)s)^{-2a} d s=1 - |m|^{-a} \mathfrak{c}_{p}I_{a}(\frac{m}{r}).
			\end{equation*}
			Since $u = 0$ on boundary $\{r=r_{0}\}$, we get
			$$
			|m|^{-a} \mathfrak{c}_{p}I_{a}(\frac{m}{r_0})=1.
			$$
			
		\end{proof}

		\begin{proposition}[Proposition 2.2 \cite{Xia--Yin--Zhou--2024}]
			Let   $(\mathcal{M}_{m, r_0}^{3,+},   g)$ be a spatial Schwarzschild half space outside a rotationally symmetric half sphere of mass $m$ given by \eqref{schwarzschild} and $u$ be the solution to \eqref{p-H}. Let $H$ be the mean curvature of the level set $S_{+,r}^2=\{x\in\mathbb R^3_+|\, |x|=r\}$. Then
			\begin{eqnarray*}
				H=\frac{2}{r}\left(1 + \frac{m}{ r}\right)^{-3}\left(1- \frac{m}{r}\right), 	\end{eqnarray*}
			\begin{equation}
				\frac{H}{2|\nabla u|_{g}}=\eta(r)= \mathfrak{c}_{p}^{-1}r^{a}\left(1 + \frac{m}{ r}\right)^{2a-1}\left((1- \frac{m}{r}\right).
			\end{equation}
		\end{proposition}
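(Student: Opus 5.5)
The plan is a direct computation on the conformally flat model. Write $g_m=w^4\bar g$ with $w=1+\frac{m}{r}$. The level sets of $u$ are the coordinate half-spheres $S^2_{+,r}$, by Proposition~\ref{Appendix: basic_solution}, so we need two ingredients: the mean curvature of $S^2_{+,r}$ in $g_m$, and $|\nabla u|_g$. Their ratio then gives $\eta$.

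\textbf{Mean curvature.} Under a conformal change $g=e^{2\varphi}\bar g$ in dimension three, a hypersurface with Euclidean unit normal $\bar\nu$ satisfies
\[
H=e^{-\varphi}\bigl(\bar H+2\,\bar\nu(\varphi)\bigr).
\]
Take $\varphi=2\ln w$, $\bar\nu=\partial_r$ (pointing toward infinity, the orientation used throughout) and $\bar H=\frac{2}{r}$. Since
\[
\partial_r\ln w=\frac{-m/r^2}{1+m/r},
\]
we get
\[
H=w^{-2}\Bigl(\frac{2}{r}-\frac{4m}{r^2w}\Bigr)=\frac{2}{r}w^{-3}\Bigl(w-\frac{2m}{r}\Bigr)=\frac{2}{r}\Bigl(1+\frac{m}{r}\Bigr)^{-3}\Bigl(1-\frac{m}{r}\Bigr).
\]
This is the first formula. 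The half-sphere meets $\{x_3=0\}$ orthogonally and the metric is rotationally symmetric, so the boundary plays no role in this computation.

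\textbf{Gradient.} From the proof of Proposition~\ref{Appendix: basic_solution} we have
\[
|\nabla u|_g=w^{-2}f'(r),\qquad f'(r)=\mathfrak{c}_p\,r^{-a-1}w^{-2a}.
\]
Hence $|\nabla u|_g=\mathfrak{c}_p\,r^{-a-1}w^{-2a-2}$.

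\textbf{Ratio.} Dividing,
\[
\frac{H}{2|\nabla u|_g}=\frac{r^{-1}w^{-3}(1-m/r)}{\mathfrak{c}_p\,r^{-a-1}w^{-2a-2}}=\mathfrak{c}_p^{-1}r^{a}w^{2a-1}\Bigl(1-\frac{m}{r}\Bigr),
\]
which is exactly $\eta(r)$ as defined in \eqref{equ:eta}.

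The only step needing care is the sign convention in the conformal transformation of the mean curvature, together with the orientation of $\bar\nu$. I would cross-check it against the known fact that $H=0$ at $r=m$ (the horizon) when $m>0$, which the formula above reproduces. Everything else is substitution.
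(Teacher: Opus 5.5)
Your proposal is correct and is essentially the paper's argument: the paper applies the conformal transformation law $H=w^{-3}\left(\bar H w+4\,\partial_{\bar\nu}w\right)$, which is exactly your $e^{-\varphi}\bigl(\bar H+2\bar\nu(\varphi)\bigr)$ with $\varphi=2\ln w$. It then divides by $2|\nabla u|_g=2w^{-2}f'(r)$, using the explicit profile $f'(r)=\mathfrak{c}_p r^{-a-1}w^{-2a}$; your algebra and your sign/orientation check at $r=m$ are fine.
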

		%\begin{proof}
			%Denote $\bar H$ and $\bar \nu$ be the mean curvature and the unit normal of $S_r=\{|x|=r\}$ under the Euclidean metric $\bar g$, respectively.
			%By the transformation formula for mean curvature under conformal change, we have	$$
			%H = w^{ - 3 }\left( \bar{H} w + 4 \frac{\partial w}{\partial \bar{\nu}} \right) = w^{ - 3 } \left(  \frac{2}{r} w + 4 \frac{\partial w}{\partial r} \right)  = \frac{2}{r}\left(1 + \frac{m}{ r}\right)^{-3}\left(1- \frac{m}{r}\right).
		%	$$
		%	Note that $|\nabla u|_{g} = w^{-2} f'(r),$ the second assertion follows.
%\end{proof}

\section{Monotonicity of $F(t)$}\label{Sec:04}
		
		The aim of this section is to prove the monotonicity of $F(t)$ in Theorem \ref{thm:Monot}.	
		\subsection{Monotonicity of $F(t)$ when $|\nabla u|\neq 0$}
		We assume that $|\nabla u|\neq 0$ in $M'$, then $u\in C^\infty(M')$.
		Consider the level set $\Sigma_t=\{u=f(t)\}$, where $f$ is a given one-variable function. One sees readily that $\{\Sigma_t\}$ satisfies the flow equation
		\begin{equation} \label{meancurv-evol}
			\begin{cases}
				\Psi: \Sigma \times (r_0, + \infty) \rightarrow \ M', \\
				\partial_t \Psi(p, t)=f'(t)\frac{\nabla u}{|\nabla u|^2}=f'(t)\frac{1}{|\nabla u|}\nu \ \ \ {\rm on}\ \Sigma\times(r_0, + \infty)\\
\langle \mu\circ\Psi,\nu\circ\Psi\rangle=0\ \ \ {\rm on}\ \partial\Sigma\times(r_0, + \infty),
			\end{cases}
		\end{equation}
where $\mu$ and $\nu$ is outward unit normal to $\hat\partial M'=\partial M'\setminus\Sigma$ and $\Sigma_t$, respectively,

		The following basic facts are well-known, see for example \cite{Hirsch--Miao--Tam--2024}.
		\begin{lemma}
			The mean curvature of a regular level set $\Sigma_t$ is given by
			\begin{eqnarray}
				&&H= (1-p)\frac{1}{ |\nabla u|}u_{\nu\nu}=\frac{1-p}{2}\frac{1}{|\nabla u|^{2}}\nu(|\nabla u|^2), \label{equ: H}
				%&&\operatorname{div}_{\Sigma_t} \left( \frac{\nabla u}{|\nabla u|^2} \right)\label{equ: div_nu}
				%= H |\nabla u|^{- 1}.
			\end{eqnarray}	
			where $u_{\nu\nu}=\nabla^2 u \left( \nu, \nu \right)$.
		\end{lemma}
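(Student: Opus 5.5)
The plan is to derive both identities from the $p$-harmonic equation, writing it in non-divergence form along a regular level set. At points where $\nabla u\neq 0$, set $\nu=\nabla u/|\nabla u|$, which is the unit normal to $\Sigma_t$. Using $\Delta_p u=0$ we get
\[
0=|\nabla u|^{p-2}\Delta u+(p-2)|\nabla u|^{p-2}\,\nabla^2u(\nu,\nu),
\]
so that $\Delta u=(2-p)u_{\nu\nu}$.

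The second step is the standard decomposition of the Laplacian along a hypersurface. On the level set $\Sigma_t$ the restriction of $u$ is constant, so
\[
\Delta u=\Delta_{\Sigma_t}u+H\,\nu(u)+u_{\nu\nu}=H|\nabla u|+u_{\nu\nu}.
\]
Here the sign of $H$ is taken with respect to $\nu$, consistent with the convention $H=\operatorname{div}\nu$. Equivalently, I will compute directly
\[
H=\operatorname{div}\Big(\frac{\nabla u}{|\nabla u|}\Big)=\frac{\Delta u}{|\nabla u|}-\frac{\nabla^2u(\nabla u,\nabla u)}{|\nabla u|^3}=\frac{\Delta u-u_{\nu\nu}}{|\nabla u|}.
\]
Substituting $\Delta u=(2-p)u_{\nu\nu}$ from the first step gives $H|\nabla u|=(1-p)u_{\nu\nu}$, which is the first identity.

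The second identity follows from
\[
\nu(|\nabla u|^2)=2\,\nabla^2u(\nabla u,\nu)=2|\nabla u|\,u_{\nu\nu},
\]
so that $u_{\nu\nu}=\nu(|\nabla u|^2)/(2|\nabla u|)$. Inserting this into the first identity yields $H=\frac{1-p}{2}|\nabla u|^{-2}\nu(|\nabla u|^2)$.

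There is no real obstacle here; the only points needing care are consistency of conventions. The first is the orientation of $\nu$ (pointing toward increasing $u$, i.e. toward infinity) and the matching sign of $H$; with this choice the Schwarzschild formula of the previous proposition comes out positive, which serves as a check. The second is smoothness: since $|\nabla u|\neq0$ near a regular level set, the $p$-Laplace equation is uniformly elliptic there, so $u$ is smooth and the second derivatives are classical. No boundary terms arise, since the identities are pointwise, including up to $\partial\Sigma_t$ by regularity.
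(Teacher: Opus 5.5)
Your proposal is correct and is essentially the paper's argument: the paper does not prove this lemma itself (it cites \cite{Hirsch--Miao--Tam--2024}), but its regularized identities \eqref{equ-Delta u} and \eqref{equ-H} in Section 4.2 reduce at $\varepsilon=0$ to exactly your two steps, $\Delta u=(2-p)u_{\nu\nu}$ and $H=(\Delta u-u_{\nu\nu})/|\nabla u|$. Your conventions ($\nu=\nabla u/|\nabla u|$, $H=\operatorname{div}\nu$) and the final rewriting via $\nu(|\nabla u|^2)=2|\nabla u|\,u_{\nu\nu}$ match how the paper uses the formula, including the positive sign in the Schwarzschild check.
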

		%By the standard calculation, we have the first variation formula.
		%%%%%%
		The evolution equation for level set flow \eqref{meancurv-evol} is as following, see \cite{Marquardt--2017}.
		\begin{lemma}\label{lem:4.2}
			\begin{equation} \label{E1}
				\frac{\p}{\p t} H
				= - f'(t) \left( \Delta_{\Sigma_t} \left( \frac{1}{|\nabla u|} \right)
				+ \left( |h|^2 + \operatorname{Ric}_{M'} (\nu, \nu) \right)\frac{1}{|\nabla u|} \right).
			\end{equation}	
\begin{equation}\label{E2}
\frac{\nabla_{\mu} |\nabla u|}{|\nabla u|^2}=-\frac{h_{\nu\nu}^{\hat\partial M'}}{|\nabla u|},
\end{equation}
where $h_{\nu\nu}^{\hat\partial M'}$ is second fundamental form of $\hat\partial M'$.
			%where $h$ is the second fundamental form and $\operatorname{Ric}_M$ is the Ricci curvature of $(M, g)$.
		\end{lemma}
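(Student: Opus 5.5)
The proof splits into the two displayed identities, both under the standing assumption $|\nabla u|\neq 0$, so that $u$ is smooth near the level sets under consideration. The flow \eqref{meancurv-evol} is then a smooth normal variation of the hypersurfaces $\Sigma_t$.

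\textbf{Identity \eqref{E1}.} This is the classical first variation of mean curvature. The flow \eqref{meancurv-evol} moves $\Sigma_t$ in the normal direction $\nu=\nabla u/|\nabla u|$ with speed $\varphi:=f'(t)|\nabla u|^{-1}$. I will use the conventions $H=\operatorname{div}_{\Sigma_t}\nu$ and $h(X,Y)=\langle\nabla_X\nu,Y\rangle$. For a normal variation $\partial_t\Psi=\varphi\nu$ one has
\[
\partial_t H=-\Delta_{\Sigma_t}\varphi-\bigl(|h|^2+\operatorname{Ric}_{M'}(\nu,\nu)\bigr)\varphi .
\]
The tangential part of the velocity can only reparametrize the surfaces, and the velocity here is purely normal, so no extra terms arise. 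Since $f'(t)$ is constant on $\Sigma_t$, it factors out of the Laplacian, and substituting $\varphi=f'(t)/|\nabla u|$ gives \eqref{E1}. As a sign check I will use expanding Euclidean spheres: there $H=2/r$, $\varphi>0$, and $\partial_t H=-2\varphi/r^2=-|h|^2\varphi$, which agrees. I will cite \cite{Marquardt--2017} for the computation. The only point to note there is that it is pointwise on $\Sigma_t$, so the free boundary does not affect it.

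\textbf{Identity \eqref{E2}.} This comes from differentiating the Neumann condition $\langle\nabla u,\mu\rangle=0$ on $\hat\partial M'$. Take any vector $X$ tangent to $\hat\partial M'$, and use the convention $h^{\hat\partial M'}(X,Y)=\langle\nabla_X\mu,Y\rangle$. Differentiating the Neumann condition along $X$ gives
\[
\nabla^2u(X,\mu)=-\langle\nabla u,\nabla_X\mu\rangle=-h^{\hat\partial M'}(X,\nabla u).
\]
By the Neumann condition $\nabla u$ is itself tangent to $\hat\partial M'$. Along $\partial\Sigma_t$ the vector $\nu$ is therefore tangent to $\hat\partial M'$, which is exactly the orthogonality condition in \eqref{meancurv-evol}. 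Choosing $X=\nu$ yields $\nabla^2u(\nu,\mu)=-|\nabla u|\,h^{\hat\partial M'}_{\nu\nu}$. Then
\[
\nabla_\mu|\nabla u|=\frac{\nabla^2u(\nabla u,\mu)}{|\nabla u|}=\nabla^2u(\nu,\mu)=-|\nabla u|\,h^{\hat\partial M'}_{\nu\nu},
\]
and dividing by $|\nabla u|^2$ gives \eqref{E2}.

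\textbf{Expected difficulty.} I expect no serious obstacle. The only delicate point is regularity up to $\hat\partial M'$, needed so that $u$ can be differentiated twice there. Where $|\nabla u|\neq0$ the equation is uniformly elliptic, and the Neumann problem gives smoothness up to the boundary by standard elliptic regularity. This justifies the boundary differentiation in \eqref{E2}.
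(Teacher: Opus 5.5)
Your proposal is correct and takes the same route as the paper. For \eqref{E1}, the paper simply cites \cite{Marquardt--2017} for the first variation of mean curvature with normal speed $f'(t)/|\nabla u|$. For \eqref{E2}, it differentiates the Neumann condition $\langle\nabla u,\mu\rangle=0$ along the direction $\nu$, which is tangent to $\hat\partial M'$, exactly as you do; this appears in the proof of Lemma~\ref{lem:3.3}(i).
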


Next we prove the following variational formula.
		\begin{lemma}\label{lem:2.4} Along the level set flow, we have
			\begin{equation*}\label{E3}
\begin{aligned}
				\frac{d}{d t}  \int_{\Sigma_t} |\nabla u|^2
				= - a f'(t)  \int_{\Sigma_t} H |\nabla u|.
\end{aligned}
\end{equation*}
%%%%%%%%
\begin{equation*}\label{E4}
\begin{aligned}
				\frac{d}{d t} \int_{\Sigma_t} H |\nabla u|
				=& -f'(t) \Bigg\{  \int_{\Sigma_t} |\nabla u|^{-2}|\nabla_{\Sigma_t} |\nabla u||^2 + \frac{1}{2}( R_{M'} - K_{\Sigma_t}+ |\overset{\circ}{h}|^2)\\
& + \int_{\Sigma_t}\frac{2 a + 1}{4}H^2+ \int_{\partial\Sigma_t}h^{\hat\partial M'}_{\nu\nu}\Bigg\},
\end{aligned}
\end{equation*}
			where $\overset{\circ}{h}$ denotes the traceless part of the second fundamental form of the level set.
		\end{lemma}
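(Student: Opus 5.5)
Both formulas follow from the first variation of integrals over the moving level sets $\Sigma_t$ under the flow \eqref{meancurv-evol}. The normal speed is $\phi=f'(t)|\nabla u|^{-1}$, the area element evolves by $\partial_t d\sigma = H\phi\, d\sigma$, and the free boundary condition $\langle\mu,\nu\rangle=0$ along $\partial\Sigma_t$ means the boundary curve slides along $\hat\partial M'$ with no extra boundary flux term in the first variation. Throughout we assume $|\nabla u|\neq0$, so $u$ is smooth and $\Sigma_t$ is a smooth free boundary surface.

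For the first formula, we compute the derivative of $|\nabla u|^2$ along the flow: $\partial_t|\nabla u|^2=\phi\,\nu(|\nabla u|^2)$. By \eqref{equ: H}, $\nu(|\nabla u|^2)=\frac{2}{1-p}H|\nabla u|^2$. Hence
\[
\frac{d}{dt}\int_{\Sigma_t}|\nabla u|^2=\int_{\Sigma_t}f'|\nabla u|^{-1}\Big(\tfrac{2}{1-p}H|\nabla u|^2+H|\nabla u|^2\Big)=f'\,\tfrac{p-3}{p-1}\int_{\Sigma_t}H|\nabla u|,
\]
which equals $-af'\int_{\Sigma_t}H|\nabla u|$.

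For the second formula, we differentiate the integrand $H|\nabla u|$. From \eqref{E1} we get the $\partial_tH$ term. For $|\nabla u|$ itself, $\partial_t|\nabla u|=\phi\,\nu(|\nabla u|)=\frac{f'}{1-p}H$, and the area variation contributes $H\phi\,d\sigma$. Collecting terms,
\[
\frac{d}{dt}\int H|\nabla u|=-f'\int\Big(|\nabla u|\Delta_{\Sigma_t}|\nabla u|^{-1}+|h|^2+\Ric(\nu,\nu)\Big)+f'\Big(\tfrac1{1-p}+1\Big)\int H^2 .
\]
Three manipulations then finish the computation.
\begin{enumerate}
\item \emph{Laplacian term.} We integrate by parts on $\Sigma_t$:
\[
\int|\nabla u|\Delta_{\Sigma_t}|\nabla u|^{-1}=\int|\nabla u|^{-2}|\nabla_{\Sigma_t}|\nabla u||^2+\int_{\partial\Sigma_t}|\nabla u|\,\partial_{\mu}|\nabla u|^{-1}.
\]
Here the conormal of $\partial\Sigma_t$ in $\Sigma_t$ is exactly $\mu$, by orthogonality. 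By \eqref{E2}, $|\nabla u|\,\partial_\mu|\nabla u|^{-1}=-\nabla_\mu|\nabla u|/|\nabla u|=h^{\hat\partial M'}_{\nu\nu}$, which gives the boundary term $\int_{\partial\Sigma_t}h^{\hat\partial M'}_{\nu\nu}$.
\item \emph{Curvature terms.} We use the traced Gauss equation $\Ric(\nu,\nu)=\frac12(R_{M'}-2K_{\Sigma_t}+H^2-|h|^2)$ together with $|h|^2=|\overset{\circ}{h}|^2+\frac12H^2$. This rewrites $|h|^2+\Ric(\nu,\nu)$ as $\frac12(R_{M'}-K_{\Sigma_t}+|\overset{\circ}{h}|^2)-\frac12K_{\Sigma_t}+\frac34H^2$. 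Only one half of the $K$ term appears in the target formula, so the bookkeeping of the remaining $-\frac12K_{\Sigma_t}$ and the $H^2$ coefficients must be matched against the stated $\frac12(R_{M'}-K_{\Sigma_t}+|\overset{\circ}{h}|^2)$; I would simply verify the constants here.
\item \emph{$H^2$ coefficients.} We combine $\frac34H^2$ with the term $-\big(\frac1{1-p}+1\big)H^2=\frac{2-p}{p-1}\cdot(-1)H^2$ from the evolution of $|\nabla u|$ and the area. Since $\frac{2-p}{p-1}$ is related to $a$ via $\frac{2-p}{p-1}=\frac{a-1}{2}$, this should yield $\frac{2a+1}{4}H^2$ up to the $K$ bookkeeping.
\end{enumerate}

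The main obstacle is the boundary term. We need that the flow has no additional boundary flux from moving $\partial\Sigma_t$ along $\hat\partial M'$; this holds because the velocity is normal to $\Sigma_t$ and tangent to $\hat\partial M'$. We also need the identification of the conormal with $\mu$ and the sign in \eqref{E2}. Getting these right produces precisely $\int_{\partial\Sigma_t}h^{\hat\partial M'}_{\nu\nu}$, which later combines with the geodesic curvature term in Gauss–Bonnet.
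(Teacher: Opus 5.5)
Your route is the paper's: first variation with normal speed $f'(t)|\nabla u|^{-1}$ and no boundary flux (the velocity is tangent to $\hat\partial M'$), \eqref{equ: H} for $\nu(|\nabla u|^2)$, \eqref{E1} for $\partial_t H$, integration by parts on $\Sigma_t$ with conormal $\mu$ together with \eqref{E2} for the boundary term, and the Gauss equation. The first formula is complete. You leave the second unfinished, though, and the ``bookkeeping'' you defer cannot succeed, because the printed statement is wrong.

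Your step (2) is already correct: $|h|^2+\Ric(\nu,\nu)=\frac12\bigl(R_{M'}-2K_{\Sigma_t}+|\overset{\circ}{h}|^2\bigr)+\frac34H^2$. None of the remaining terms involve $K$, so the identity that actually holds has $\frac12\bigl(R_{M'}-2K_{\Sigma_t}+|\overset{\circ}{h}|^2\bigr)$. The $-K_{\Sigma_t}$ in the statement is a misprint.

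This is exactly what the paper's proof obtains in its last line. It is also the form used in the next proposition: there $-\alpha f'\cdot\frac12(-2K_{\Sigma_t})=-\gamma'(t)K_{\Sigma_t}$ must appear with coefficient one to pair with Gauss--Bonnet, $\int_{\Sigma_t}K_{\Sigma_t}+\int_{\partial\Sigma_t}h^{\hat\partial M'}_{\tau\tau}\le 2\pi$.

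A check on hemispheres in the flat half-space confirms this. There $R=0$, $\overset{\circ}{h}=0$, $h^{\hat\partial M'}=0$, $\int_{\Sigma_t}K=2\pi$, $\int_{\Sigma_t}H^2=8\pi$, and $\int_{\Sigma_t}H|\nabla u|=4\pi\mathfrak{c}_p t^{-a}$. The $-2K$ version gives $-4\pi a f'$, matching the direct derivative, while the printed version is off by $\pi f'$.

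Step (3) also has a sign slip: $-\bigl(\frac1{1-p}+1\bigr)=-\frac{2-p}{1-p}=+\frac{2-p}{p-1}=\frac{a-1}{2}$. So the coefficient inside the braces is $\frac34+\frac{a-1}{2}=\frac{2a+1}{4}$, whereas the sign you wrote would give $\frac{5-2a}{4}$. Carry out this arithmetic explicitly rather than asserting the target. Then state the corrected $K$ coefficient instead of hoping to absorb a leftover $-\frac12K_{\Sigma_t}$ into the $H^2$ terms.
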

		%%%%%%
		%%%%%%
		\begin{proof}
			Recall that the variation field of the level set flow is $\partial_t \Psi = f'(t) \frac{1}{|\nabla u|} \nu $.
		
			Using \eqref{equ: H}, one computes
			\begin{align*}
				\frac{d}{d t}  \int_{\Sigma_t} |\nabla u|^2
				=& \int_{\Sigma_t}  f'(t) \frac{1}{|\nabla u|} \nu (|\nabla u|^2)+ |\nabla u|^2 H f'(t)  \frac{1}{|\nabla u|} \\
				=& \int_{\Sigma_t}  f'(t) \frac{2}{1 - p} |\nabla u| H + f'(t) H |\nabla u|  \\
				=& - af'(t)  \int_{\Sigma_t} H |\nabla u|.
			\end{align*}
			On the one hand, by the divergence theorem and \eqref{E2}, we obtain
			\begin{align}\label{equ: div}
				\int_{\Sigma_t} |\nabla u| \Delta_{\Sigma_t} \left( \frac{1}{|\nabla u|} \right)
				=& \int_{\Sigma_t} |\nabla u|^{-2} \left| \nabla_{\Sigma_t} {|\nabla u|} \right|^{2} +\int_{\partial\Sigma_t}-|\nabla u|^{-1}\langle\nabla |\nabla u|,\mu\rangle\\
=& \int_{\Sigma_t} |\nabla u|^{-2} \left| \nabla_{\Sigma_t} {|\nabla u|} \right|^{2} +\int_{\partial\Sigma_t}h^{\hat\partial M'}_{\nu\nu}.
			\end{align}
			On the other hand, the Gauss equation tells that
			\begin{equation}\label{equ:gauss}
				2 \operatorname{Ric}_M (\nu, \nu) = R_M-2K_{\Sigma_t}+\frac12H^2-|\overset\circ{h}|^2.
			\end{equation}
			It follows from \eqref{equ: H}, \eqref{E1}, \eqref{equ: div} and \eqref{equ:gauss} that
			\begin{align*}
				\frac{d}{d t} \left(  \int_{\Sigma_t} H |\nabla u|    \right)
				%=& \int_{\Sigma_t} \frac{d}{d t} \left( H |\nabla u| \right) + H |\nabla u| f'(t)  H |\nabla u|^{- 1}   \\
				%=& \int_{\Sigma_t} \frac{d}{d t} \left( H \right) |\nabla u| + H X \left(|\nabla u| \right) + H |\nabla u| f'(t)  H |\nabla u|^{- 1}   \\
				=& \int_{\Sigma_t} - f'(t) \left( \Delta_{\Sigma_t} \left( \frac{1}{|\nabla u|} \right) + \left( |h|^2 + \operatorname{Ric}_M (\nu, \nu)  \right)\frac{1}{|\nabla u|} \right) |\nabla u| \\
				&+ \int_{\Sigma_t} H f'(t) \frac{1}{|\nabla u|} \nu (|\nabla u|) + H|\nabla u| Hf'(t)   |\nabla u|^{- 1} \\
				=& -f'(t) \Bigg\{  \int_{\Sigma_t} |\nabla u|^{-2}|\nabla_{\Sigma_t} |\nabla u||^2 + \frac{1}{2}( R_{M'} - 2K_{\Sigma_t}+ |\overset{\circ}{h}|^2)\\
&+\int_{\Sigma_t} \frac{2 a + 1}{4}H^2 +\int_{\partial\Sigma_t}h^{\hat\partial M'}_{\nu\nu}\Bigg\}.
			\end{align*}
		\end{proof}
%%%%%%%%%%%%%%%%%%%%%%%%%%%%%%%%%%%%%%%%%%%%%%%%%%%%%%%%%%%%%%%%%%%
\begin{proposition}
			Under assumptions of Theorem \ref{thm:Monot}.  Assume in addition that $|\nabla u|\neq 0$ in $M'$. Then $F'(t)\le 0$.
		\end{proposition}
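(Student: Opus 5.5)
Differentiate $F(t)=2\pi\gamma(t)+\alpha(t)\int_{\Sigma_t}H|\nabla u|+\beta(t)\int_{\Sigma_t}|\nabla u|^2$ directly, using the variational formulas of Lemma \ref{lem:2.4} and the ODE system \eqref{equ: differential equations}. We will need two sign facts about the coefficients:
\[
f'(t)=\mathfrak{c}_p t^{-a-1}(1+m/t)^{-2a}>0 \qquad\text{and}\qquad \alpha\ge 0 \text{ on } [r_0,\infty).
\]
The sign of $\alpha$ should follow from \eqref{alpha>0-assumpt}. Indeed, $\alpha(t)/(t(1+m/t)^2)$ equals its value at $r_0$ plus a term controlled by $C_2\ge0$ and by the monotonicity of $\eta$ and of $I_a(m/t)$. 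I would verify this exactly as in \cite{Xia--Yin--Zhou--2024}.

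The product rule gives
\[
F'=2\pi\gamma'+\alpha'\int_{\Sigma_t} H|\nabla u|+\alpha\frac{d}{dt}\int_{\Sigma_t} H|\nabla u|+\beta'\int_{\Sigma_t}|\nabla u|^2-a\beta f'\int_{\Sigma_t} H|\nabla u|.
\]
Now substitute the ODEs: $\gamma'=-f'\alpha$, $\alpha'=(2a+1)\eta f'\alpha+af'\beta$, and $\beta'=-(2a+1)\eta^2f'\alpha$. The $\beta$-terms cancel, and we are left with
\[
F'=f'\alpha\Big\{-2\pi+(2a+1)\int_{\Sigma_t}\big(\eta H|\nabla u|-\eta^2|\nabla u|^2\big)\Big\}+\alpha\frac{d}{dt}\int_{\Sigma_t}H|\nabla u|.
\]
Inserting the second formula of Lemma \ref{lem:2.4} (with the Gauss term $-K_{\Sigma_t}$, normalized as in the proof's final line) and completing the square, $\tfrac14H^2-\eta H|\nabla u|+\eta^2|\nabla u|^2=(\tfrac H2-\eta|\nabla u|)^2$, gives
\[
F'=-f'\alpha\Big\{\int_{\Sigma_t}\Big(\frac{|\nabla_{\Sigma_t}|\nabla u||^2}{|\nabla u|^2}+\frac{R}{2}+\frac{|\overset{\circ}{h}|^2}{2}\Big)+(2a+1)\int_{\Sigma_t}\Big(\frac H2-\eta|\nabla u|\Big)^2+2\pi-\int_{\Sigma_t}K_{\Sigma_t}+\int_{\partial\Sigma_t}h^{\hat\partial M'}_{\nu\nu}\Big\}.
\]

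Gauss--Bonnet with boundary handles the last three terms. By Lemma \ref{lem:2.3}, $\Sigma_t$ is a connected free boundary surface, so $\int_{\Sigma_t}K+\int_{\partial\Sigma_t}\kappa=2\pi\chi(\Sigma_t)\le 2\pi$, where $\kappa$ is the geodesic curvature of $\partial\Sigma_t$ in $\Sigma_t$. Because $\Sigma_t$ meets $\hat\partial M'$ orthogonally, the conormal of $\partial\Sigma_t$ in $\Sigma_t$ is $\mu$, so $\kappa=h^{\hat\partial M'}(T,T)$ with $T$ the unit tangent of $\partial\Sigma_t$. Hence
\[
h^{\hat\partial M'}_{\nu\nu}+\kappa=H_{\hat\partial M'}\ge0.
\]
Therefore $2\pi-\int_{\Sigma_t}K+\int_{\partial\Sigma_t}h_{\nu\nu}\ge \int_{\partial\Sigma_t}H_{\hat\partial M'}\ge0$. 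With $R\ge0$, every term in the braces is nonnegative, and $f'\alpha\ge0$ then yields $F'\le0$.

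The main obstacle is the bookkeeping of constants: the Gauss-equation normalization ($K$ versus $2K$ in Lemma \ref{lem:2.4}) must be consistent with the $2\pi\gamma$ term, so that exactly $2\pi\chi$ appears. The other delicate step is the sign convention for $h^{\hat\partial M'}$ relative to $\mu$, which is what makes the boundary terms combine into $H_{\hat\partial M'}$ and not into its negative. I would fix both conventions by checking on the Schwarzschild half-space, where $F$ is constant. Proving $\alpha\ge0$ from \eqref{alpha>0-assumpt} is a routine one-variable argument.
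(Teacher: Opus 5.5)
Your proposal is correct and follows the paper's proof essentially step for step. You differentiate $F$ using Lemma \ref{lem:2.4}, correctly taking the $-2K_{\Sigma_t}$ normalization from its proof rather than the typo in its statement. You then eliminate $\alpha',\beta'$ via \eqref{equ: differential equations} to complete the square $(\frac H2-\eta|\nabla u|)^2$, and finish with Gauss--Bonnet on the connected free-boundary level sets (Lemma \ref{lem:2.3}), using $h^{\hat\partial M'}_{\nu\nu}+\kappa=H_{\hat\partial M'}\ge 0$, $R\ge0$, and $\gamma'=-f'\alpha\le 0$. As in the paper, the sign $\alpha\ge0$ is taken from Proposition A.2 of \cite{Xia--Yin--Zhou--2024} under \eqref{alpha>0-assumpt}.
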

		\begin{proof}
			Since $|\nabla u| \neq 0$, it  follows from Lemma \ref{lem:2.4} that
			\begin{align*}
				F'(t)=& 2\pi \gamma'(t) + \alpha'(t) \int_{\Sigma_t} H |\nabla u|   + \beta'(t) \int_{\Sigma_t} |\nabla u|^2   \\
				&- \alpha(t) f'(t) \left\{  \int_{\Sigma_t} |\nabla u|^{-2}|\nabla_{\Sigma_t} |\nabla u||^2 + \frac{1}{2}( R_{M} - 2K_{\Sigma_t}+ |\overset{\circ}{h}|^2) + \frac{2 a + 1}{4}H^2 \right\} \\
				&-a \beta(t) f'(t) \int_{\Sigma_t} H |\nabla u|-\alpha(t) f'(t)\int_{\partial\Sigma_t}h^{\hat\partial M'}_{\nu\nu}   \\
				=& 2 \pi \gamma'(t) - \alpha(t) f'(t) \left\{  \int_{\Sigma_t} |\nabla u|^{-2}|\nabla_{\Sigma_t} |\nabla u||^2 + \frac{1}{2} ( R_{M'} - 2K_{\Sigma_t} + |\overset{\circ}{h}|^2 ) \right\} \\
				&- (2 a + 1) \alpha(t) f'(t) \int_{\Sigma_t} \left( \frac{H}{2} - \eta(u) |\nabla u| \right)^2
-\alpha(t) f'(t)\int_{\partial\Sigma_t}h^{\hat\partial M'}_{\nu\nu}\\
				&+ \left[\alpha'(t) - (2 a + 1) \eta(t) f'(t) \alpha(t) - a f'(t) \beta(t)\right]\int_{\Sigma_t} H |\nabla u|    \\
				&+ \left[\beta'(t)  + (2 a + 1) (\eta(t))^2 f'(t) \alpha(t)\right]\int_{\Sigma_t} |\nabla u|^2  .
			\end{align*}
			Using the 	system of ODEs \eqref{equ: differential equations}, we get
			\begin{align*}
				F'(t)=&\gamma'(t) \left\{  \int_{\Sigma_t} |\nabla u|^{-2}|\nabla_{\Sigma_t} |\nabla u||^2 + \frac{1}{2}R_{M'}  +\frac12 |\overset{\circ}{h}|^2 + (2 a + 1) \left( \frac{H}{2} - \eta(u) |\nabla u| \right)^2 \right\}\\
				&+\gamma'(t)\left(2\pi-\int_{\Sigma_t}  K_{\Sigma_t}-\int_{\partial\Sigma_t}h^{\hat\partial M'}_{\tau\tau}+\int_{\partial\Sigma_t}H^{\hat\partial M'}\right),
			\end{align*}
where $h^{\hat\partial M'}_{\tau\tau}$ is geodesic curvature of $\partial\Sigma_t$ for $\tau\in T(\hat\partial M')\cap T\Sigma_t$ and $H^{\hat\partial M'}$ is mean curvature
of $\hat\partial M'$ (see Lemma 5.7 in \cite{Koerber--2023}).

%%%%%%%%%%%%%%%%%%
			 { Since $M'$ is simply connected, we can see that $\Sigma_t$ is connected by Lemma \ref{lem:2.3} which implies that  $\chi(\Sigma_t)\leq 1$. It follows from the Gauss-Bonnet formula that $$\int_{\Sigma_t} K_{\Sigma_t}+ \int_{\partial\Sigma_t}h^{\hat\partial M'}_{\tau\tau}\leq 2\pi.$$
        }
In view of Proposition A.2 in \cite{Xia--Yin--Zhou--2024} and the assumption \eqref{alpha>0-assumpt}, we see that $\alpha(t)\ge 0$. The assertion follows since $\gamma'(t)=-f'(t)\alpha(t)\le 0$, $R_M\ge 0$, $H^{\hat\partial M'}\geq0$ and $2a+1=\frac{5-p}{p-1}>0$. 	\end{proof}

%%%%%%%%%%%%%%%%%%%%%%%%%%%%%%%%%%%%%%%%
\subsection{Monotonicity of $F(t)$ via regularization} \label{Sec:3.2}
		In order to prove the monotonicity part in Theorem \ref{thm:Monot}, we need to establish the monotone property of $F(t)$ via regularization by using similarly method with \cite{Xia--Yin--Zhou--2024}. Let $u$ be the solution to \eqref{p-H}.
		
		In the following we denote $$f_0(t)=f(t), \eta_0(t)=\eta(t), a_0(t)=a(t), b_0(t)=\b(t), g_0(t)=g(t).$$
		Following \cite{Agostiniani--Mantegazza--Mazzieri--Oronzio--2023, Hirsch--Miao--Tam--2024}, we approximate $u$ by smooth function $\{v_{\ve}\}_{\ve>0}$, which is a sequence of solutions of
		\begin{equation}\label{eqn:3.1}
			\left\{
			\begin{aligned}
				{\rm div}(|\n v_\ve|_{\ve}^{p-2}\n v_\ve)&=0 \ \ \  \  {\rm in}\ \ M(T)\cap M',\\
				v_\ve&=0\ \ \ \ {\rm on} \ \ \Sigma,\\
\langle \nabla v_{\ve},\mu\rangle&=0\ \ {\rm on} \ \ \hat\p M',\\
				v_\ve&=f_0(T)\ \ \  \ {\rm on}\  \ \Sigma(T),
			\end{aligned}\right.
		\end{equation}
		where $M(T)=\{0<u<f_0(T)\}$, $\Sigma(t)=\{u=f_0(t)\}$ and $|\n v_\ve|_{\ve}=\sqrt{|\n v_\ve|^2+\ve^2}$.  It is clear that for any $\ve>0$, $v_{\ve}$ is smooth. We can obtain from  \cite{DiBenedetto--1983--NATMA, DiBenedetio--1983--PAPP} that as $\ve\rightarrow 0$, $v_{\ve}\rightarrow u$ in $C^{1,\beta}$-topology  for some $\beta >0$ on any compact subsets of $M(T)\cap M'$, and $v_{\ve}\rightarrow u$ in $C^{\infty}$-topology on any compact subsets of $M(T)\setminus\{|\n u|\neq 0\}$.
		
		In the following, we'll simplify notation for convenience by omitting the subscript $\ve$ and using $v$ instead of $v_\ve$ when there is no risk of confusion. We define
		\begin{equation}\label{eqn:3.2}
			\left\{
			\begin{aligned}
				&{\rm Cap}_{p,\ve}=\int_{\p M'}|\n v|_{\ve}^{p-2}|\n v|=\int_{\Sigma_{t, \ve}}|\n v|_{\ve}^{p-2}|\n v|,\\
				&\mathfrak{c}_{p,\ve}=\left(\frac{{\rm Cap}_{p,\ve}}{2\pi}\right)^{\tfrac1{p-1}},\\
				&{m_{\ve}= \operatorname{sgn}(k) \left( \mathfrak{c}_{p,\ve}I_{a}(k) \right) ^{\frac{1}{a}},} \\
				&{r_{0,\ve}=\frac{m_\ve}{k},}\\
				&f_{\ve}(t)=1 - \int_{t}^{\infty} \mathfrak{c}_{p,\ve} s^{-a-1}\left(1+\frac{m_{\ve}}{s}\right)^{-2a},\\
				&\eta_{\ve}(t) = \mathfrak{c}_{p,\ve}^{-1}t^{a}\left(1 + \frac{m_{\ve}}{ t}\right)^{2a-1}\left(1- \frac{m_{\ve}}{t}\right).
			\end{aligned}\right.
		\end{equation}
		{One can check \begin{eqnarray}\label{xeq-2}
				a\eta_\ve(r_{0,\ve})-1<0.
			\end{eqnarray}
			In fact, when $k=1$, we have $a\eta_\ve(r_{0,\ve})-1=-1<0$. For $k\in(-1,0)\cup(0,1)$, \eqref{xeq-2} is equivalent to $aI_{a}(k)<|k|^{a}(1+k)^{-2a+1}(1-k)^{-1}$, which can be easily verified by taking the derivative with respect to $k$ on both sides of the inequality.  }
		
		Denote $\Sigma_{t, \ve}:=\{v_{\ve}=f_{\ve}(t)\}$. When $\Sigma_{t, \ve}$ is a regular hypersurface, we define $F_{\ve}(t)$ as follows:
		\begin{equation}\label{eqn:3.3}
			F_{\ve}(t) = 2 \pi \gamma_{\ve}(t) + \alpha_{\ve}(t) \int_{\Sigma_{t, \ve}} H |\nabla v| + \beta_{\ve}(t) \int_{\Sigma_{t, \ve}} |\nabla v|^2.
		\end{equation}
		Here $\alpha_{\ve}(t)$, $\beta_{\ve}(t)$, $\gamma_{\ve}(t)$
		are solutions to the corresponding systems of ODEs:
		\begin{equation}\label{eqn:3.4}
			\left\{
			\begin{aligned}
				&0=\alpha_{\ve}'(t) - (2 a + 1) \eta_{\ve}(t) f'_{\ve}(t) \alpha_{\ve}(t) - a f'_{\ve}(t) \beta_{\ve}(t), \\
				&0=\beta_{\ve}'(t) + (2 a + 1) (\eta_{\ve}(t))^2 f'_{\ve}(t) \alpha_{\ve}(t), \\
				&0=\gamma'_{\ve}(t) + f'_{\ve}(t) \alpha_{\ve}(t).
			\end{aligned}\right.
		\end{equation}
		By a similar consideration as in Propositions A.1 and A.2 in \cite{Xia--Yin--Zhou--2024}, one sees that $\alpha_\ve(t)$ is given by
		\begin{equation}\label{alpha-ep}
			\begin{aligned}
				\alpha_\ve(t)
				=& t\left(1+\frac{m_\ve}{t}\right)^{2}
				\left\{ \left( C_{2,\ve}\mathfrak{c}_{p,\ve}+C_{1,\ve}\frac{a}{m_\ve} \frac{I_{a}(\frac{m_\ve}{t})}{I_{a}(\frac{m_\ve}{r_{0,\ve}})}  \right)
				\eta_\ve(t)-C_{1,\ve}\frac{1}{m_\ve} \right\},
			\end{aligned}
		\end{equation}
		where $C_{1,\ve}$ and $C_{2,\ve}$ are two constants,
		and $\alpha_\ve(t)\ge 0$ if and only if (see \cite{Xia--Yin--Zhou--2024} for details)
		\begin{eqnarray}\label{alpha>0-assumpt1}
			C_{2,\ve} \ge 0, \hbox{ and }
			\left( C_{2,\ve}\mathfrak{c}_{p,\ve}+C_{1,\ve}\frac{a}{m_\ve} \right)
			\eta_\ve(r_{0,\ve}) \ge C_{1,\ve}\frac{1}{2m_\ve}. 	
		\end{eqnarray}	
		%By the assumption \eqref{alpha>0-assumpt} and the simple fact that $\mathfrak{c}_{p,\ve}, m_\ve, \eta_\ve(r_{0})$ converge to $\mathfrak{c}_{p}, m, \eta(r_{0})$, respectively, as $\ve\to 0$, we may choose appropriate $C_{i,\ve} \ (i=1,2)$ so that \eqref{alpha>0-assumpt1} holds and $C_{i,\ve}\to C_i$ as $\ve\to 0$ $(i=1,2)$.
		%For such choice, we can see  that $\alpha_\ve(t)\ge 0$. {See Lemma \ref{Lem:A.1} for the details.}
		%{We note that $
			%\lim\limits_{\epsilon\rightarrow0}\operatorname{Cap}_{p,\epsilon} = \operatorname{Cap}_{p}
		%	$.
		%	Since $\mathfrak{c}_{p,\ve}, m_\ve, \eta_\ve(r_{0,\ve})$ are continuous about $\operatorname{Cap}_{p,\epsilon}$, one sees that $\mathfrak{c}_{p,\ve}, m_\ve, \eta_\ve(r_{0,\ve})$ converge to $\mathfrak{c}_{p}, m, \eta(r_{0})$, respectively, as  $\ve\to 0$. Then we choose  $C_{i,\ve}, i=1,2$ as follows:
		%	\begin{eqnarray}\label{equ:Ciepsilon}
			%	\begin{cases}
			%		C_{2,\epsilon}\mathfrak{c}_{p,\epsilon} = C_{2}\mathfrak{c}_{p}, \\
			%		\left( C_{2,\epsilon}\mathfrak{c}_{p,\epsilon}+C_{1,\epsilon}\frac{a}{m_{\epsilon}} \right)
			%		\eta_{\epsilon}(r_{0,\ve})-C_{1,\epsilon}\frac{1}{m_{\epsilon}} = \left( C_{2}\mathfrak{c}_{p}+C_{1}\frac{a}{m} \right)
					%\eta(r_{0})-C_{1}\frac{1}{m}.
			%	\end{cases}
		%	\end{eqnarray}
			%This can be done because of \eqref{xeq-2}. By such choice of $C_{i,\ve}, i=1,2$ and the assumption \eqref{alpha>0-assumpt}, we get \eqref{equ:Ciepsilon}, and hence $\alpha_\ve(t)\ge 0$. Moreover,
		%	$\lim\limits_{\epsilon\rightarrow0}C_{i,\epsilon} = C_{i}.$}
		
%%%%%%%%%%%%%%%%%%%%%%%%%
		
		Next, we use $\alpha_{\ve}(v),\beta_{\ve}(v), \gamma_{\ve}(v), \eta_{\ve}(v)$ to indicate $\alpha_{\ve}(t), \beta_{\ve}(t), \gamma_{\ve}(t), \eta_{\ve}(t)$ for $t=f_{\ve}^{-1}(v)$, respectively. Thus $\alpha_{\ve}(v),\beta_{\ve}(v), \gamma_{\ve}(v)$, as functions of $v$, satisfy that
		\begin{equation}\label{eqn:3.5}
			\left\{
			\begin{aligned}
				&0=\alpha_{\ve}'(v) - (2 a + 1) \eta_{\ve}(v) \alpha_{\ve}(v) - a \beta_{\ve}(v), \\
				&0=\beta_{\ve}'(v) + (2 a + 1) (\eta_{\ve}(v))^2  \alpha_{\ve}(v), \\
				&0=\gamma'_{\ve}(v) +  \alpha_{\ve}(v).
			\end{aligned}\right.
		\end{equation}
		It is easy to see that
		\begin{eqnarray}\label{equ-Delta u}
			\Delta v=(2-p)\frac{|\n v|^2}{|\n v|^2_{\ve}}v_{\nu\nu},
		\end{eqnarray}
		where $v_{\nu\nu}=\frac{g(\nabla|\nabla v|,\nabla v)}{|\nabla v|}$, and the mean curvature $H$ of $\Sigma_{t, \ve}$ is given by
		\begin{eqnarray}\label{equ-H}
			H=\frac1{|\n v|}(\Delta v-v_{\nu\nu})=-\frac1{|\n v|}\frac{(p-1)|\n v|^2+\ve^2}{|\n v|^2_{\ve}}v_{\nu\nu}.
		\end{eqnarray}
		%see for example \cite[Lemma 3.1]{HMT}.
		Using \eqref{eqn:3.2} and \eqref{equ-H}, we can write
		\begin{equation*}
			\begin{aligned}
				F_{\ve}(t) %=& 4 \pi \gamma_{\ve}(t) + \alpha_{\ve}(t) \int_{\Sigma_{t, \ve}} H |\nabla v| d \sigma + \beta_{\ve}(t) \int_{\Sigma_{t, \ve}} |\nabla v|^2 d\sigma\\
				=&\int_{\Sigma_{t, \ve}}2\pi\gamma_{\ve}(v) {\rm Cap}_{p,\ve}^{-1}|\n v|^{p-2}_{\ve}|\n v|+\alpha_{\ve}(v)(\Delta v-v_{\nu\nu})+\beta_{\ve}(v)|\n v|^2.
			\end{aligned}
		\end{equation*}
%%%%%%%%%%%%%%%%%%%%%%%%%%		

		Let
		$
		X_{\ve}=U_{\ve}+V_{\ve}+W_{\ve}
		$
		where
		\begin{equation*}
			\left\{
			\begin{aligned}
				U_{\ve}=&2\pi\gamma_{\ve}(v) {\rm Cap}_{p,\ve}^{-1}|\n v|^{p-2}_{\ve}\n v,\\
				V_{\ve}=&\alpha_{\ve}(v)\left(\frac{\Delta v}{|\n v|}\n v-\n |\n v|\right),\\
				W_{\ve}=&\beta_{\ve}(v)|\n v|\n v.
			\end{aligned}\right.
		\end{equation*}	
		%	and $X=\lim_{\ve\rightarrow0}X_{\ve}$,
		Then
		\begin{equation}\label{eqn:3.6}
			F_{\ve}(t)=\int_{\Sigma_{t, \ve}}\left\langle X_{\ve},\, \frac{\n v}{|\n v|}\right\rangle.
		\end{equation}
		By adapting the proof of \cite[Lemma 1.3]{Agostiniani--Mantegazza--Mazzieri--Oronzio--2023}, since $\alpha_{\ve}(t), \beta_{\ve}(t), \gamma_{\ve}(t)$ converge to $\alpha(t), \beta(t), \gamma(t)$, respectively, as $\ve\to 0$, we have the following lemma.
		\begin{lemma}\label{lem:3.2}
			Suppose $\{u=f_{0}(t)\}$ is regular for $f_0(t)\in(0, f_0(T))$. Then for  $\ve>0$ small enough, $\Sigma_{t, \ve}=\{v_{\ve}=f_{\ve}(t)\}$ is also regular. Moreover,
			\begin{equation*}
				\lim_{\ve\rightarrow 0}F_{\ve}(t)=F(t).
			\end{equation*}
		\end{lemma}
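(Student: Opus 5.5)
The plan is to prove two things separately: that $\Sigma_{t,\ve}$ is regular for small $\ve$, and that $F_\ve(t)\to F(t)$. For both I would first collect the convergence of all the ingredients. By Lemma \ref{lem:2.2} and the convergence $v_\ve\to u$ (in $C^{1,\beta}$ on compact subsets of $M(T)\cap M'$, and smoothly away from $\{|\nabla u|=0\}$), the flux $\mathrm{Cap}_{p,\ve}=\int_{\Sigma_{t,\ve}}|\nabla v|_\ve^{p-2}|\nabla v|$ converges to $\mathrm{Cap}_p$. To see this, compute the flux on a fixed regular level set of $u$. The field $|\nabla v|_\ve^{p-2}\nabla v$ is divergence free and tangent to $\hat\partial M'$ (Neumann condition), so this flux is independent of the level set. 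Near that level set $\nabla v\to\nabla u\neq 0$ uniformly, so $|\nabla v|_\ve^{p-2}|\nabla v|\to|\nabla u|^{p-1}$ there. Hence $\mathfrak{c}_{p,\ve}\to\mathfrak{c}_p$, $m_\ve\to m$, $r_{0,\ve}\to r_0$, and $f_\ve\to f_0$, $\eta_\ve\to\eta$ in $C^1_{loc}$. I would then choose the constants $C_{i,\ve}$ with $C_{i,\ve}\to C_i$ while keeping \eqref{alpha>0-assumpt1}; this is possible thanks to \eqref{xeq-2}. With that choice the explicit formulas \eqref{alpha-ep} and their analogues give $\alpha_\ve\to\alpha$, $\beta_\ve\to\beta$, $\gamma_\ve\to\gamma$ locally uniformly in $t$.

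\textbf{Regularity.} Fix $t$ with $\Sigma(t)=\{u=f_0(t)\}$ regular. By compactness of the level set and continuity of $\nabla u$ up to $\hat\partial M'$ (Lemma \ref{lem:2.2}), there is a neighbourhood $N$ of $\Sigma(t)$ in $\overline{M'}$ with $\delta>0$ such that $|\nabla u|\ge 2\delta$ on $N$, and $N$ contains the levels $\{|u-f_0(t)|\le\rho\}$. The convergence $v_\ve\to u$ must hold in $C^{1,\beta}$ up to the Neumann boundary piece $\hat\partial M'\cap N$. For this I would invoke Lieberman's boundary estimates \cite{Lieberman--1988} for the regularized conormal problem; these are uniform in $\ve$. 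Then $|\nabla v_\ve|\ge\delta$ on $N$ for small $\ve$. Since $f_\ve(t)\to f_0(t)$ and $v_\ve\to u$ uniformly, the level set $\Sigma_{t,\ve}$ lies inside $N$ for small $\ve$. It is therefore a regular free-boundary surface, being orthogonal to $\hat\partial M'$ by the Neumann condition.

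\textbf{Convergence of $F_\ve$.} On $N$ the gradient is bounded below and the regularized equation is uniformly elliptic. Schauder theory, interior and up to the Neumann boundary, then upgrades the convergence to $v_\ve\to u$ in $C^{2,\beta}(N)$; this uses that $u$ is smooth where $\nabla u\neq 0$. Consequently $\Sigma_{t,\ve}$ converges smoothly to $\Sigma(t)$. One can write it as a normal graph over $\Sigma(t)$ using the flow of $\nabla u/|\nabla u|^2$, which preserves $\hat\partial M'$. The integrands converge uniformly: $H|\nabla v|=\Delta v-v_{\nu\nu}\to H|\nabla u|$ and $|\nabla v|^2\to|\nabla u|^2$. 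Together with the convergence of the coefficients, the representation \eqref{eqn:3.3} gives $F_\ve(t)\to F(t)$.

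\textbf{Main obstacle.} The hard step is the uniform-in-$\ve$ $C^{1,\beta}$, and then $C^{2,\beta}$, convergence up to the noncompact boundary piece $\hat\partial M'$, near the corner $\partial\Sigma(t)$. The interior results of DiBenedetto do not cover this. I would handle it by reflecting across $\hat\partial M'$ in Fermi coordinates: the Neumann condition makes the even extension a weak solution of a divergence-form equation of the same structure, with Lipschitz coefficients. DiBenedetto's and Tolksdorf's estimates then apply uniformly in $\ve$. Where $|\nabla u|\ge\delta$, standard Schauder estimates for the uniformly elliptic reflected equation give the $C^{2,\beta}$ bound.
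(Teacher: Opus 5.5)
Your proposal is correct and is essentially the argument the paper has in mind. The paper gives no details beyond invoking the proof of \cite[Lemma 1.3]{Agostiniani--Mantegazza--Mazzieri--Oronzio--2023} together with $\alpha_\ve\to\alpha$, $\beta_\ve\to\beta$, $\gamma_\ve\to\gamma$, and you have written that argument out (gradient lower bound near the regular level set, uniform ellipticity and smooth convergence there, convergence of $\mathrm{Cap}_{p,\ve}$ and of the coefficients), adding the care at $\hat\partial M'$ that the paper leaves implicit.

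The only adjustment is to get the $C^{2,\beta}$ bound from boundary Schauder estimates for the Neumann problem, as in your third paragraph, rather than from Schauder theory for the reflected equation. The reflected metric is only Lipschitz across $\hat\partial M'$, so Schauder theory does not apply to the reflected equation directly.
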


		For $\delta>0$, let
		\begin{equation*}
			\left\{
			\begin{aligned}
				%U_{\ve,\delta}=&U_{\ve}=4\pi\gamma_{\ve}(v) {\rm Cap}_{p,\ve}^{-1}|\n v|^{p-2}_{\ve}\n v,\\
				V_{\ve,\delta}=&\alpha_{\ve}(v)\left(\frac{\Delta v}{|\n v|_{\delta}}\n v-\n |\n v|_{\delta}\right),\\
				W_{\ve,\delta}=&\beta_{\ve}(v)|\n v|_{\delta}\n v,\\
				X_{\ve,\delta}=&U_{\ve}+V_{\ve,\delta}+W_{\ve,\delta}.
			\end{aligned}\right.
		\end{equation*}
		It is clear that $U_{\ve}, V_{\ve,\delta}, W_{\ve,\delta}$ are smooth in $M_T$. Let $t_1 < t_2 $ such that $\Sigma_{t_1, \ve},\Sigma_{t_2, \ve}$ are regular.
		One sees from the divergence theorem and \eqref{eqn:3.6} that
		\begin{equation}\label{eqn:3.7}
			\begin{aligned}
				F_{\ve}(t_2)-F_{\ve}(t_1)=&\lim_{\delta\rightarrow0}\left(\int_{\{f_\ve(t_1)<v<f_\ve(t_2)\}}{\rm div} X_{\ve,\delta}-\int_{\hat\partial M'_{\ve}\cap\Omega_{\ve}}\langle X_{\ve,\delta},\mu \rangle\right),
			\end{aligned}
		\end{equation}
where $\Omega_{\ve}=\{f_\ve(t_1)<v<f_\ve(t_2)\}$.

		Next we compute the intergrand in the right hand side of \eqref{eqn:3.7}.
		\begin{lemma}[(Lemma 3.5 of \cite{Xia--Yin--Zhou--2024})]\label{lem:3.3}\
			\begin{itemize}
\item [(i)]  At the points where $|\n v|=0$, we have $\langle X_{\ve,\delta},\mu\rangle=0$. At the points where $|\n v|>0$, we have $\langle X_{\ve,\delta},\mu\rangle=\alpha_{\ve}(v)\frac{|\nabla u|^2}{|\nabla u|_{\delta}}h^{\hat\partial M'}_{\nu\nu}$.
				\item[(ii)] ${\rm div} U_{\ve}=2\pi \gamma'_{\ve}(v) {\rm Cap}_{p,\ve}^{-1}|\n v|_{\ve}^{p-2}|\n v|^2$.
				
				\item[(iii)] At the points where $|\n v|=0$, we have ${\rm div}W_{\ve,\delta}=\beta_{\ve}(v)\delta\Delta v$.
				
				At the points where $|\n v|>0$, we have
				$$
				{\rm div}W_{\ve,\delta}=\beta_{\ve}(v)\left((2-p)\frac{|\n v|_{\delta}|\n v|^2}{|\n v|^2_{\ve}}v_{\nu\nu}+\frac{|\n v|^2}{|\n v|_{\delta}}v_{\nu\nu}\right)+\beta'_{\ve}(v)|\n v|^2|\n v|_{\delta}.
				$$
				
				\item[(iv)] At the points where $|\n v|=0$,  we have ${\rm div}V_{\ve,\delta}\leq0$.
				
				At the points where $|\n v|>0$,
				we have ${\rm div}V_{\ve,\delta}\leq I_{\ve,\delta}$,
				where
				\begin{equation*}
					\begin{aligned}
						I_{\ve,\delta}=&\alpha_{\ve}(v)|\n v|^{-1}_{\delta}\Bigg\{(2-p)^2\frac{|\n v|^4}{|\n v|_{\ve}^4}v_{\nu\nu}^2-(2-p)\frac{|\n v|^4}{|\n v|_{\ve}^2|\n v|_{\delta}^2}v_{\nu\nu}^2-|\n v|^2{\rm Ric}(\nu,\nu)\\
						&-\frac{1}{2}\frac{|\nabla v|^2}{|\nabla v|_{\ve}^2} \left( (p-2)^2\frac{|\nabla v|^2}{|\nabla v|_{\ve}^2} + 2p-3 \right)v_{\nu\nu}^2\Bigg\}\\
						&+\alpha'_{\ve}(v)\left((2-p)\frac{|\n v|^4}{|\n v|^2_{\ve}|\n v|_{\delta}}-\frac{|\n v|^2}{|\n v|_{\delta}}\right)v_{\nu\nu}.
					\end{aligned}
				\end{equation*}
			\end{itemize}
		\end{lemma}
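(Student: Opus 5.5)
We compute each of the four items directly from the definitions of $U_{\ve}$, $V_{\ve,\delta}$, $W_{\ve,\delta}$, using the regularized equation \eqref{eqn:3.1} and the ODE system \eqref{eqn:3.5} in the variable $v$. Throughout, we write the equation as
\[
{\rm div}(|\n v|_\ve^{p-2}\n v)=|\n v|_\ve^{p-2}\Big(\Delta v+(p-2)\frac{|\n v|^2}{|\n v|_\ve^2}v_{\nu\nu}\Big)=0 .
\]
This is \eqref{equ-Delta u}. At critical points of $v$ it gives $\Delta v=0$ only after multiplying out; more precisely, there we use $\n|\n v|_\delta=\frac{\n^2 v(\n v,\cdot)}{|\n v|_\delta}=0$.

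\textbf{Item (ii).} Since $\gamma_\ve(v)$ depends on $x$ only through $v$, we have
\[
{\rm div}U_\ve=2\pi{\rm Cap}_{p,\ve}^{-1}\Big(\gamma_\ve'(v)\langle\n v,|\n v|_\ve^{p-2}\n v\rangle+\gamma_\ve(v)\,{\rm div}(|\n v|_\ve^{p-2}\n v)\Big),
\]
and the last term vanishes by \eqref{eqn:3.1}.

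\textbf{Item (iii).} We expand ${\rm div}W_{\ve,\delta}=\beta_\ve'|\n v|^2|\n v|_\delta+\beta_\ve\big(\langle\n|\n v|_\delta,\n v\rangle+|\n v|_\delta\Delta v\big)$. Then we use $\langle\n|\n v|_\delta,\n v\rangle=\frac{|\n v|^2}{|\n v|_\delta}v_{\nu\nu}$ and substitute \eqref{equ-Delta u}. At $|\n v|=0$ only $\beta_\ve\delta\Delta v$ survives.

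\textbf{Item (i).} We use the Neumann condition $\langle\n v,\mu\rangle=0$, so $U_\ve$, $W_{\ve,\delta}$ and the $\Delta v$-part of $V_{\ve,\delta}$ have zero normal component. What remains is
\[
\langle X_{\ve,\delta},\mu\rangle=-\alpha_\ve\frac{\n^2v(\n v,\mu)}{|\n v|_\delta}.
\]
Differentiating $\langle\n v,\mu\rangle=0$ along the tangential direction $\n v$ gives $\n^2v(\n v,\mu)=-h^{\hat\p M'}(\n v,\n v)=-|\n v|^2h^{\hat\p M'}_{\nu\nu}$, which is the claimed formula (this is the regularized analogue of \eqref{E2}). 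The sign convention for $h$ must be checked against \eqref{E2}. At $|\n v|=0$ every term carries a factor of $\n v$, so the normal component vanishes.

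\textbf{Item (iv).} This is the main obstacle. We write
\[
{\rm div}V_{\ve,\delta}=\alpha_\ve'\Big(\frac{|\n v|^2\Delta v}{|\n v|_\delta}-\frac{|\n v|^2v_{\nu\nu}}{|\n v|_\delta}\Big)+\alpha_\ve\,{\rm div}\Big(\frac{\Delta v}{|\n v|_\delta}\n v-\n|\n v|_\delta\Big).
\]
For the second term we use Bochner's formula
\[
\tfrac12\Delta|\n v|^2=|\n^2v|^2+\langle\n v,\n\Delta v\rangle+{\rm Ric}(\n v,\n v)
\]
together with $\Delta|\n v|_\delta=\frac{\frac12\Delta|\n v|^2-|\n|\n v|_\delta|^2}{|\n v|_\delta}$. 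The $\langle\n v,\n\Delta v\rangle$ terms cancel, leaving
\[
{\rm div}\Big(\frac{\Delta v}{|\n v|_\delta}\n v-\n|\n v|_\delta\Big)=|\n v|_\delta^{-1}\Big((\Delta v)^2-|\n^2v|^2-{\rm Ric}(\n v,\n v)+|\n|\n v|_\delta|^2-\Delta v\frac{|\n v|^2}{|\n v|_\delta^2}v_{\nu\nu}\Big).
\]
Next we substitute $\Delta v$ from \eqref{equ-Delta u}. For $|\n^2v|^2$ we use the refined Kato-type bound
\[
|\n^2v|^2\ge v_{\nu\nu}^2+2|\n^T|\n v||^2+\tfrac12(\Delta v-v_{\nu\nu})^2,
\]
and we use $|\n|\n v|_\delta|^2\le\frac{|\n v|^2}{|\n v|_\delta^2}\big(v_{\nu\nu}^2+|\n^T|\n v||^2\big)\le v_{\nu\nu}^2+|\n^T|\n v||^2$. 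Dropping the nonpositive tangential terms and simplifying $(\Delta v)^2-v_{\nu\nu}^2-\frac12(\Delta v-v_{\nu\nu})^2+|\n|\n v|_\delta|^2$ with \eqref{equ-Delta u} should give exactly the quadratic-in-$v_{\nu\nu}$ coefficients in $I_{\ve,\delta}$, namely $(2-p)^2$, $-(2-p)$, and $-\tfrac12\big((p-2)^2\frac{|\n v|^2}{|\n v|_\ve^2}+2p-3\big)$; this matches \cite[Lemma 3.5]{Xia--Yin--Zhou--2024}. The inequality requires $\alpha_\ve\ge0$, which holds by \eqref{alpha>0-assumpt1}.

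At points with $|\n v|=0$ we have $\Delta v=0$, since $\n v=0$ there and the equation is nondegenerate for $\ve>0$. The expression then reduces to $\alpha_\ve|\n v|_\delta^{-1}\big(-|\n^2v|^2+|\n|\n v|_\delta|^2\big)$ with $\n|\n v|_\delta=0$, which is $\le0$.

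The delicate part is the algebraic bookkeeping. We must track the factors $\frac{|\n v|^2}{|\n v|_\delta^2}\le1$ so that each discarded term has the right sign.
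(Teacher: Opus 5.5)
Your proposal is correct and follows the paper's approach. Item (i) is obtained exactly as in the paper: the Neumann condition leaves only $-\alpha_\varepsilon\langle\nabla|\nabla v|_\delta,\mu\rangle$, and differentiating $\langle\nabla v,\mu\rangle=0$ along the tangential vector $\nabla v$ gives $\nabla^2v(\nabla v,\mu)=-|\nabla v|^2h^{\hat\partial M'}_{\nu\nu}$. Items (ii)--(iv) are the Bochner/refined-Kato computation behind the cited Lemma 3.5 of \cite{Xia--Yin--Zhou--2024}.

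One small correction to your bookkeeping in (iv). With $s=|\nabla v|^2/|\nabla v|_\varepsilon^2$ you get
\[
-\tfrac12\big((2-p)s-1\big)^2v_{\nu\nu}^2=-\tfrac12 s\big((p-2)^2s+2p-3\big)v_{\nu\nu}^2-\tfrac12(1-s)v_{\nu\nu}^2 .
\]
So your bound is not exactly $I_{\varepsilon,\delta}$ but smaller than it by $\tfrac12\alpha_\varepsilon|\nabla v|_\delta^{-1}\frac{\varepsilon^2}{|\nabla v|_\varepsilon^2}v_{\nu\nu}^2\ge0$, which still gives ${\rm div}V_{\varepsilon,\delta}\le I_{\varepsilon,\delta}$.
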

%%%%%%%%%%%
\begin{proof}
Since $\langle \nabla u,\mu\rangle=0$, it is easy to see that
$$\langle X_{\ve,\delta},\mu\rangle=-\alpha_{\ve}(v)\langle \nabla|\nabla v|_{\delta},\mu\rangle=-\alpha_{\ve}(v)\frac{|\nabla u|}{|\nabla u|_{\delta}}\langle \nabla|\nabla v|,\mu\rangle.$$
If $|\nabla u|=0$,  it is easy to see that $\langle X_{\ve,\delta},\mu\rangle=0$. If $|\nabla u|>0$, from $\frac{d}{dt}\langle \nabla u,\mu\rangle=0$, we have $\langle \nabla|\nabla v|,\mu\rangle=-|\nabla v|h^{\hat\partial}_{\nu\nu}$. It follows that $$\langle X_{\ve,\delta},\mu\rangle=\alpha_{\ve}(v)\frac{|\nabla u|^2}{|\nabla u|_{\delta}}h^{\hat\partial M'}_{\nu\nu}.$$
From Lemma 3.5 of \cite{Xia--Yin--Zhou--2024}, we can see that (ii)-(iv) hold. This complete the proof.
\end{proof}
%%%%%%%%%%%%%%%%%%%%%%%

		\begin{proposition}\label{lem:3.4}
			Let
			$\{u=f_0(t_1)\}$, $\{u=f_0(t_2)\}$ be two regular level sets for $t_1< t_2$.
			Assume $\alpha_{\ve}(t)\ge 0$ on {$(\frac{|m_{\ve}|}{2},+\infty)$}. Then the following inequality holds:
			\begin{eqnarray}\label{asymp-monot}
				F_{\ve}(t_2)-F_{\ve}(t_1)%\leq&\int_{\tau_1}^{\tau_2}\gamma'_{\ve}(\tau)\left(4\pi -\int_{\Sigma(\tau)} K\right)
				\leq \ve\int_{\{f_\ve(t_1)<v<f_\ve(t_2)\}}\frac{|\alpha'_{\ve}(v)-\beta_{\ve}(v)|^2}{\alpha_{\ve}(v)}|\n v|.
			\end{eqnarray}
		\end{proposition}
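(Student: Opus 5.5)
Starting from \eqref{eqn:3.7}, the plan is to bound the two terms on the right separately and then let $\delta\to0$. For the boundary term we use Lemma \ref{lem:3.3}(i). Since $\langle X_{\ve,\delta},\mu\rangle=\alpha_\ve(v)\frac{|\nabla v|^2}{|\nabla v|_\delta}h^{\hat\partial M'}_{\nu\nu}$ (or $0$ where $\nabla v=0$), we do not try to sign $h_{\nu\nu}$ pointwise. Instead we keep this term and pair it with the boundary contribution produced by the interior divergence. This mirrors the smooth computation in Lemma \ref{lem:2.4}: there $\int_{\partial\Sigma_t}h_{\nu\nu}$ combined with the geodesic curvature of $\partial\Sigma_t$ to give $-\int_{\partial\Sigma_t}h_{\tau\tau}+\int_{\partial\Sigma_t}H^{\hat\partial M'}$. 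In the coarea decomposition the leftover is $\alpha_\ve\int_{\partial\Sigma_{t,\ve}}H^{\hat\partial M'}\ge0$, which carries the favourable sign.

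For the interior term we add (ii)--(iv) of Lemma \ref{lem:3.3} and use the ODE system \eqref{eqn:3.5} in the variable $v$. The terms $\alpha_\ve'$ and $\beta_\ve'$ then combine with the $v_{\nu\nu}$ and $v_{\nu\nu}^2$ terms exactly as in the smooth case. On $\{|\nabla v|>0\}$ we rewrite $\mathrm{Ric}(\nu,\nu)$ via the Gauss equation \eqref{equ:gauss}, and use \eqref{equ-H} to express $v_{\nu\nu}$ through $H$. Applying the coarea formula on $\Omega_\ve$, the integrand becomes a $t$-integral of level-set integrals. These contain $R\ge0$, $|\overset{\circ}{h}|^2$, the square $(\frac H2-\eta_\ve|\nabla v|)^2$, the Gauss--Bonnet term $2\pi-\int K-\int_{\partial}h_{\tau\tau}\ge0$ (the level sets are connected, as in Lemma \ref{lem:2.3}), and $H^{\hat\partial M'}\ge0$. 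Each of these comes with the nonpositive factor $\gamma_\ve'=-\alpha_\ve\le0$.

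Where $|\nabla v|=0$, (iii) and (iv) give $\mathrm{div}X_{\ve,\delta}\le\beta_\ve\delta\Delta v$, which vanishes as $\delta\to0$; the critical set is handled by dominated convergence, as in \cite{Agostiniani--Mantegazza--Mazzieri--Oronzio--2023}. The remaining error comes from $\ve$-dependent coefficients that do not match the exact $p$-harmonic structure. These are the terms involving $\frac{|\nabla v|^2}{|\nabla v|^2_\ve}-1=-\frac{\ve^2}{|\nabla v|_\ve^2}$ multiplying $(\alpha'_\ve-\beta_\ve)v_{\nu\nu}$ together with a negative quadratic term $-\alpha_\ve(\ldots)v_{\nu\nu}^2$.

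We absorb this error by completing the square with Young's inequality $xy\le \frac{\alpha}{\ve'}x^2\cdots$. The aim is a bound of the form $\frac{\ve^2}{|\nabla v|_\ve^2}\big(|\alpha'_\ve-\beta_\ve||v_{\nu\nu}||\nabla v|-c\,\alpha_\ve v_{\nu\nu}^2\big)\le \ve\frac{|\alpha'_\ve-\beta_\ve|^2}{\alpha_\ve}|\nabla v|$, using $\ve/|\nabla v|_\ve\le1$ and $|\nabla v|/|\nabla v|_\ve\le1$; this is where $\alpha_\ve>0$ is needed. Letting $\delta\to0$ then yields \eqref{asymp-monot}.

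The main obstacle is this last algebraic step: choosing the Young splitting so that exactly one power of $\ve$ survives while the coefficient of $v_{\nu\nu}^2$ stays nonpositive uniformly in $p\in(1,3)$. This must be checked against the explicit expression $I_{\ve,\delta}$, following Lemma 3.6 of \cite{Xia--Yin--Zhou--2024}. A secondary point is justifying the pairing of the boundary terms on $\hat\partial M'$ through the coarea formula when level sets are only a.e. regular.
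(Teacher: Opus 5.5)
Your proposal follows the paper's proof: the divergence identity \eqref{eqn:3.7}, Lemma \ref{lem:3.3}(ii)--(iv) together with the ODE system, with the $\ve$-error absorbed as in (3.26)--(3.31) of \cite{Xia--Yin--Zhou--2024} (which the paper simply cites rather than redoing, so the algebra you flag as the main obstacle is outsourced there too), and the $\hat\partial M'$ flux from Lemma \ref{lem:3.3}(i) converted by the coarea formula into $\int\alpha_\ve(\xi)\int_{\partial\Sigma_{\xi}}h^{\hat\partial M'}_{\nu\nu}\,d\xi$, whose splitting $h^{\hat\partial M'}_{\nu\nu}=H^{\hat\partial M'}-h^{\hat\partial M'}_{\tau\tau}$ supplies the boundary Gauss--Bonnet term and the nonnegative $H^{\hat\partial M'}$ term. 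The only imprecision is your phrase about a ``boundary contribution produced by the interior divergence'': the geodesic-curvature term comes entirely from that $\hat\partial M'$ flux (the coarea Jacobian there is $|\n v|$ because $\n v$ is tangent to $\hat\partial M'$), while the interior integral contributes only $-\int\alpha_\ve(\xi)\bigl(2\pi-\int_{\Sigma_{t,\ve}}K\bigr)\,d\xi$ plus the $\ve$-error.
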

		
		\begin{proof}
			
			By Lemma \ref{lem:3.3}, for any $\delta>0$, we have
			\begin{equation}\label{4.21}
				\begin{aligned}
					F_{\ve}(t_2)-F_{\ve}(t_1)=&\int_{\{f_\ve(t_1)<v<f_\ve(t_2)\}}{\rm div}X_{\ve,\delta}-\int_{\hat\partial M'_{\ve}}\langle X_{\ve,\delta},\mu \rangle\\ \leq&\int_{\{f_\ve(t_1)<v<f_\ve(t_2)\}}{\rm div}U_{\ve}+C\delta+\int_{\{f_\ve(t_1)<v<f_\ve(t_2)\}}({\rm div}W_{\ve,\delta}+I_{\ve,\delta})\mathbf{1}_{\{|\n v|>0\}}\\
&-\int_{\hat\partial M'_{\ve}\cap\Omega_{\ve}}\langle X_{\ve,\delta},\mu \rangle,
				\end{aligned}
			\end{equation}
			for some $C > 0$ independent of $\delta$, where $\mathbf{1}_K$ is the characteristic function of $K$.
From  3.26-3.31 of Proposition 3.2 in \cite{Xia--Yin--Zhou--2024}, we have
\begin{equation}\label{4.22}
\begin{aligned}
&\int_{\{f_\ve(t_1)<v<f_\ve(t_2)\}}{\rm div}U_{\ve}+C\delta+\int_{\{f_\ve(t_1)<v<f_\ve(t_2)\}}({\rm div}W_{\ve,\delta}+I_{\ve,\delta})\mathbf{1}_{\{|\n v|>0\}}	\\
&\leq\ve\int_{\{f_\ve(t_1)<v<f_\ve(t_2)\}}\frac{|\alpha'_{\ve}(v)-\beta_{\ve}(v)|^2}{\alpha_{\ve}(v)}|\n v|- \int_{f_\ve(t_1)}^{f_\ve(t_2)}\alpha_{\ve}(\xi)
					\left( 2\pi -\int_{\Sigma_{t,\ve}} K \right) d\xi
\end{aligned}
\end{equation}	
Substituting \eqref{4.22} into \eqref{4.21}, by using co-area formula and property (i)  in Lemma \ref{lem:3.3}, yields to	
			\begin{equation*}\label{eqn:3.26}
				\begin{aligned}
					F_{\ve}(t_2)-F_{\ve}(t_1)\leq &\ve\int_{\{f_\ve(t_1)<v<f_\ve(t_2)\}}\frac{|\alpha'_{\ve}(v)-\beta_{\ve}(v)|^2}{\alpha_{\ve}(v)}|\n v|\\
&- \int_{f_\ve(t_1)}^{f_\ve(t_2)}\alpha_{\ve}(\xi)
					\left( 2\pi -\int_{\Sigma_{t,\ve}} K-\int_{\partial\Sigma_{t,\ve}}h^{\Gamma}_{\tau\tau}+\int_{\partial\Sigma_{t,\ve}}H^{\hat\partial M'} \right) d\xi\\
				\leq&\ve\int_{\{f_\ve(t_1)<v<f_\ve(t_2)\}}\frac{|\alpha'_{\ve}(v)-\beta_{\ve}(v)|^2}{\alpha_{\ve}(v)}|\n v|,
				\end{aligned}
			\end{equation*}
			where we have used $2\pi -\int_{\Sigma_{t,\ve}} K-\int_{\partial\Sigma_{t,\ve}}h^{\Gamma}_{\tau\tau}\ge 0$, for each regular level set $\{v=\xi\}$ is connected.
			This completes  the proof of Proposition \ref{lem:3.4}.
		\end{proof}
		
		By letting $\ve\to 0$ in \eqref{asymp-monot}, in view of Lemma \ref{lem:3.2}, we see the following
		\begin{corollary} Let
			$\{u=f_0(t_1)\}$, $\{u=f_0(t_2)\}$ be two regular level sets for $t_1< t_2$. Then $F(t_2)\le F(t_1)$.
		\end{corollary}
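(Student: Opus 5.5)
The plan is to pass to the limit $\ve\to 0$ in the inequality of Proposition~\ref{lem:3.4}, using Lemma~\ref{lem:3.2} for convergence of the endpoint values. Fix $t_1<t_2$ with $\{u=f_0(t_1)\}$, $\{u=f_0(t_2)\}$ regular, and choose $T>t_2$ so that both level sets lie in the interior of the approximation domain $M(T)$. By Lemma~\ref{lem:3.2}, for $\ve$ small the sets $\Sigma_{t_i,\ve}$ are regular and $F_\ve(t_i)\to F(t_i)$ for $i=1,2$. It therefore suffices to show that the right-hand side of \eqref{asymp-monot} tends to $0$.

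\textbf{Step 1: admissible constants.} The constants $C_{1,\ve},C_{2,\ve}$ must be chosen so that \eqref{alpha>0-assumpt1} holds (hence $\alpha_\ve\ge0$, as Proposition~\ref{lem:3.4} requires) and $C_{i,\ve}\to C_i$. Since $\mathrm{Cap}_{p,\ve}\to\mathrm{Cap}_p$, we have $\mathfrak{c}_{p,\ve}\to\mathfrak{c}_p$, $m_\ve\to m$, $r_{0,\ve}\to r_0$ and $\eta_\ve(r_{0,\ve})\to\eta(r_0)$. I will impose
\[
C_{2,\ve}\mathfrak{c}_{p,\ve}=C_2\mathfrak{c}_p,
\]
and match the second quantity in \eqref{alpha>0-assumpt1} with its limit from \eqref{alpha>0-assumpt}. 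This linear system is solvable for $C_{1,\ve}$ because of \eqref{xeq-2}, which gives a nonzero coefficient. The resulting $\alpha_\ve,\beta_\ve,\gamma_\ve$ converge to $\alpha,\beta,\gamma$ locally uniformly together with their derivatives in $v$.

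\textbf{Step 2: bounding the error term.} Write the integrand as $\ve\,\frac{|\alpha_\ve'-\beta_\ve|^2}{\alpha_\ve}(v)\,|\nabla v|$. By the coarea formula,
\[
\int_{\{f_\ve(t_1)<v<f_\ve(t_2)\}}\frac{|\alpha'_\ve-\beta_\ve|^2}{\alpha_\ve}|\nabla v|
=\int_{f_\ve(t_1)}^{f_\ve(t_2)}\frac{|\alpha'_\ve-\beta_\ve|^2}{\alpha_\ve}(\xi)\,|\{v=\xi\}|\,d\xi .
\]
Alternatively, one can bound $\int|\nabla v|$ directly by Hölder on the bounded region, using the uniform $C^{1,\beta}$ bounds on $v_\ve$ from \cite{DiBenedetto--1983--NATMA}. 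Either way the integral is bounded independently of $\ve$, provided the coefficient $|\alpha_\ve'-\beta_\ve|^2/\alpha_\ve$ stays bounded on the compact $v$-range.

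\textbf{Main obstacle: the factor $1/\alpha_\ve$.} The weight $1/\alpha_\ve$ could blow up where $\alpha_\ve$ vanishes, which can happen only at the left endpoint $t=r_{0,\ve}$ in the borderline case of \eqref{alpha>0-assumpt}. Since $t_1>r_0$ and $\alpha_\ve\to\alpha$ uniformly on $[t_1,t_2]$, this is harmless whenever $\alpha>0$ on $[t_1,t_2]$. If $\alpha\equiv0$ identically, then $F\equiv 2\pi\gamma$ with $\beta$ determined accordingly, and monotonicity is trivial; in that case one just drops the $\alpha$-terms in the derivation. In the intermediate situation, I would first perturb $C_1$ slightly so that \eqref{alpha>0-assumpt} holds strictly, prove $F(t_2)\le F(t_1)$ for the perturbed coefficients, and then let the perturbation tend to zero; $F$ depends linearly and continuously on $(C_1,C_2)$.

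With the right-hand side bounded by $C\ve\to0$, letting $\ve\to0$ in \eqref{asymp-monot} gives $F(t_2)\le F(t_1)$.
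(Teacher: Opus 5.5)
Your proposal is correct and follows the paper's route: the paper simply lets $\ve\to0$ in \eqref{asymp-monot} using Lemma~\ref{lem:3.2}. Your choice of $C_{i,\ve}$ (solvable thanks to \eqref{xeq-2}) and your $O(\ve)$ bound on the right-hand side just make explicit what the paper leaves implicit, and the perturbation of $C_1$ is not needed: at an interior zero of a nonnegative $\alpha$ one has $\alpha=\alpha'=0$, hence $\beta=0$ by the first equation of \eqref{equ: differential equations} and $\alpha\equiv0$ by ODE uniqueness, so every nontrivial admissible $\alpha$ is already positive on $(r_0,\infty)\supset[t_1,t_2]$.
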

		This finishes the proof of monotonicity part in  Theorem \ref{thm:Monot}.

\section{Asymptotic behavior of $F(t)$}\label{Sec:05}

In this section, we will estimate the limit of $F(t)$ as $t\rightarrow+\infty$.

Firstly,	by using \eqref{ode-solution1}  we may rewrite $F(t)$ as follows:
		\begin{eqnarray}\label{equ:F1}
			F(t)
			&=
			& - \left( C_{2}\mathfrak{c}_{p}\frac{2m}{a}+C_{1}\frac{I_{a}(\frac{m}{t})}{I_{a}(\frac{m}{r_{0}})} \right)
			\left(
			2\pi - \mathfrak{c}_{p}^{-2} t^{2a}(1+\frac{m}{t})^{4a} \int_{\Sigma_{t}} |\nabla u|^2
			\right) \\
			&& - \frac{\alpha(t)}{4\eta(t)} \int_{\Sigma_{t}} \left( H-2\eta(t)|\nabla u| \right) ^2
			- \frac{1}{4} \frac{\alpha(t)}{\eta(t)}
			\left( 8\pi - \int_{\Sigma_{t}} H^2 \right) \nonumber
			\\&&-2\pi \frac{\alpha(t)}{\eta(t)}\left( \frac{(1-\frac{m}{t})^{2}}{(1+\frac{m}{t})^{2}}-1 \right). \nonumber
		\end{eqnarray}
		In order to estimate the limit of $F(t)$, we need to know the limit of some relevant quantities. We follows
Xia-Yin-Zhou's idea to obtain the following results.
		%%%%%%
		\begin{lemma}
			\begin{eqnarray}
				&&\lim_{t\to\infty} \left( \frac{\alpha(t)}{\eta(t)}-C_{2}\mathfrak{c}_{p}t(1 + \frac{m}{t})^{2} \right) =0, \label{equ:EstimateAlphaDevidedByEta}\\
				&&\lim_{t\to\infty} 2\pi \frac{\alpha(t)}{\eta(t)}\left( \frac{(1-\frac{m}{t})^{2}}{(1+\frac{m}{t})^{2}}-1 \right)
				= -8 \pi C_{2}\mathfrak{c}_{p}m.\label{xeq-limit6}
			\end{eqnarray}
		\end{lemma}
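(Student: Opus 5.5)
This is a direct computation from the explicit formula \eqref{ode-solution1}. I would write $s=\frac{m}{t}\to 0$ and use the definition \eqref{equ:eta} of $\eta$.

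First, divide $\alpha$ by $\eta$:
\[
\frac{\alpha(t)}{\eta(t)}=t\Bigl(1+\frac{m}{t}\Bigr)^{2}\Bigl( C_{2}\mathfrak{c}_{p}+C_{1}\frac{a}{2m}\frac{I_{a}(\frac mt)}{I_a(k)}\Bigr)-C_1\frac{1}{2m}\,\frac{t(1+\frac mt)^2}{\eta(t)} .
\]
Subtracting $C_2\mathfrak c_p t(1+\frac mt)^2$ leaves two terms, which I would handle separately.

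\emph{Second term.} Since $\eta(t)=\mathfrak c_p^{-1}t^a(1+O(1/t))$ and $a>0$, the quantity $t(1+\frac mt)^2/\eta(t)$ is of order $t^{1-a}$. When $a\ge1$ this does not obviously vanish, so I would not estimate it alone but combine it with the first term.

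\emph{First term and the combination.} From the definition, $I_a(s)=\frac{|s|^a}{a}\bigl(1-\frac{2a^2}{a+1}\operatorname{sgn}(s)|s|+O(s^2)\bigr)$. Hence $I_a(\frac mt)=\frac{|m|^a}{a}t^{-a}(1+O(1/t))$. Using $|m|^a=I_a(k)\mathfrak c_p$, this gives $\frac{I_a(m/t)}{I_a(k)}=\frac{\mathfrak c_p}{a}t^{-a}(1+O(1/t))$. The first term is then
\[
\frac{C_1}{2m}\,\mathfrak c_p t^{1-a}(1+O(1/t)).
\]
The second term is
\[
-\frac{C_1}{2m}\mathfrak c_p t^{1-a}(1+O(1/t)).
\]
The leading parts cancel, leaving $O(t^{-a})\to0$. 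To make this rigorous I need both expansions to first order in $1/t$. I would check the $O(1/t)$ corrections carefully: the combination is $\frac{C_1\mathfrak c_p}{2m}t^{1-a}\cdot O(1/t)=O(t^{-a})$, so only the fact that both corrections are $O(1/t)$ matters, not their exact coefficients. This proves \eqref{equ:EstimateAlphaDevidedByEta}. The same bookkeeping shows that the ODE solution behaves like the Schwarzschild model, and I expect this to be the main (but mild) obstacle. The sign convention $\operatorname{sgn}(k)=\operatorname{sgn}(m)$ keeps the expansion uniform for $m<0$.

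\emph{Proof of \eqref{xeq-limit6}.} Expand
\[
\frac{(1-s)^2}{(1+s)^2}-1=-4s+O(s^2),\qquad s=\frac mt .
\]
By the first limit, $\frac{\alpha}{\eta}=C_2\mathfrak c_p t(1+O(1/t))+o(1)$. Multiplying gives
\[
2\pi\bigl(C_2\mathfrak c_p t+o(1)+O(1)\bigr)\Bigl(-\frac{4m}{t}+O(t^{-2})\Bigr)\to-8\pi C_2\mathfrak c_p m .
\]
The $o(1)$ and $O(1)$ parts are killed by the $1/t$ factor. In the case $k=1$ the same formulas apply, since $m>0$ and nothing changes.
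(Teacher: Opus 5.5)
Your proposal is correct and is essentially the paper's argument. The paper simply defers to the direct expansion in Lemma 5.1 of \cite{Xia--Yin--Zhou--2024} with $m$ replaced by $2m$, which amounts to the same cancellation you carry out: the leading terms $\mathfrak{c}_p t^{-a}$ of $aI_a(\frac mt)/I_a(k)$ and $1/\eta(t)$ cancel, leaving $O(t^{-a})$, after which $\frac{(1-s)^2}{(1+s)^2}-1=-4s+O(s^2)$ gives the second limit. (One minor slip: the isolated term $t(1+\frac mt)^2/\eta(t)\sim t^{1-a}$ fails to vanish when $a\le 1$, not when $a\ge 1$; this is harmless, since you combine the two terms for every $a>0$.)
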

		%%%%%%
		%%%%%%
		\begin{proof}
	We just have to instead of  $m$ by $2m$ in the proof of Lemma 5.1 in \cite{Xia--Yin--Zhou--2024}.
			
		\end{proof}
		%%%%%%%
		
		%%%%%%
		Similar to \cite[Lemma 5.2]{Xia--Yin--Zhou--2024}, we have the following facts.
		\begin{lemma}
			%Under the assumptions of Theorem \ref{Thm:1.01},
			Assume that $f(t)$ is a regular value of $u$. Let $0 < \tilde{\tau} < \min \{ \tau,1 \}$. Then, along $\Sigma_{t} = \{ u = f(t) \}$, we have  that as $t \rightarrow + \infty$,
			\begin{eqnarray}
				&&|\Sigma_{t}| = 2 \pi   t^{2} \left( 1 + O(t^{- \tilde{\tau}}) \right),\label{equ:Area}
				\\&&\int_{\Sigma_t} H |\nabla u| = 4 \pi \mathfrak{c}_{p}  t^{- a} \left( 1 + O(t^{-  \tilde{\tau}}) \right),\label{equ:IntegrateHNablau}	
				\\&&\int_{\Sigma_t} |\nabla u|^{2} = 2 \pi \mathfrak{c}_{p}^{2} t^{- 2 a} \left( 1 + O(t^{-  \tilde{\tau}}) \right). \label{equ:IntegrateNablauSquare}
			\end{eqnarray}
		\end{lemma}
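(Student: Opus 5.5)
The plan is to read everything off the $C^2$ asymptotic expansion of Lemma~\ref{lem:2.2},
\[
u=1-\frac{\mathfrak{c}_p}{a}r^{-a}+O_2(r^{-a-\tilde\tau}),
\]
together with the relation between $f(t)$ and $t$. Since $f(t)=1-\frac{I_a(m/t)}{I_a(k)}$ and $f'(t)=\mathfrak{c}_p t^{-a-1}(1+\frac mt)^{-2a}$, we have
\[
1-f(t)=\frac{\mathfrak{c}_p}{a}t^{-a}\bigl(1+O(t^{-1})\bigr).
\]

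\textbf{Locating $\Sigma_t$.} Comparing the two expansions on $\Sigma_t=\{u=f(t)\}$ gives
\[
r^{-a}\bigl(1+O(r^{-\tilde\tau})\bigr)=t^{-a}\bigl(1+O(t^{-1})\bigr),
\]
so $|x|=t(1+O(t^{-\tilde\tau}))$ uniformly on $\Sigma_t$. Moreover $\bar\nabla u=\mathfrak{c}_p r^{-a-1}\frac{x}{r}+O(r^{-a-1-\tilde\tau})$ is nonvanishing for large $r$. Hence, for $t$ large, $\Sigma_t$ is a graph over the Euclidean half-sphere $S^2_{+,t}$ in polar coordinates, $r=t(1+\phi)$ with $\phi=O_2(t^{-\tilde\tau})$ in the scale-invariant sense. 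Near $\{x^3=0\}$ this graphical description uses the Neumann condition: $\Sigma_t$ meets $\hat\partial M'$ orthogonally. That orthogonality is consistent with the half-sphere picture, since the metric perturbation there is $O(r^{-\tau})$.

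\textbf{The three estimates.}
\begin{itemize}
\item[(a)] \emph{Area.} Since $g=\bar g+O(r^{-\tau})$ and the graph function has gradient $O(t^{-\tilde\tau})$, we get
\[
|\Sigma_t|=|S^2_{+,t}|_{\bar g}\bigl(1+O(t^{-\tilde\tau})\bigr)=2\pi t^2\bigl(1+O(t^{-\tilde\tau})\bigr).
\]
\item[(b)] \emph{Gradient.} From $|\nabla u|=|\bar\nabla u|_{\bar g}(1+O(r^{-\tau}))$ and the $C^1$ expansion,
\[
|\nabla u|=\mathfrak{c}_p t^{-a-1}\bigl(1+O(t^{-\tilde\tau})\bigr)\quad\text{pointwise on }\Sigma_t.
\]
Integrating against (a) gives \eqref{equ:IntegrateNablauSquare}:
\[
\mathfrak{c}_p^2t^{-2a-2}\cdot 2\pi t^2\bigl(1+O(t^{-\tilde\tau})\bigr).
\]
\item[(c)] \emph{Mean curvature.} I will use formula \eqref{equ: H}, $H=(1-p)u_{\nu\nu}/|\nabla u|$, or equivalently $H=\Delta u/|\nabla u|-u_{\nu\nu}/|\nabla u|$. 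The $C^2$ part of the expansion, combined with the Christoffel-difference estimates from Step~1 of the proof of Lemma~\ref{lem:2.2} ($\Gamma=O(r^{-\tau-1})$), gives
\[
u_{\nu\nu}=-(a+1)\mathfrak{c}_p r^{-a-2}\bigl(1+O(r^{-\tilde\tau})\bigr).
\]
Therefore
\[
H=(p-1)(a+1)r^{-1}\bigl(1+O(t^{-\tilde\tau})\bigr)=\frac{2}{t}\bigl(1+O(t^{-\tilde\tau})\bigr),
\]
using $(p-1)(a+1)=2$. Then $H|\nabla u|=2\mathfrak{c}_pt^{-a-2}(1+O(t^{-\tilde\tau}))$, and integrating over an area $2\pi t^2(1+O(t^{-\tilde\tau}))$ gives $4\pi\mathfrak{c}_pt^{-a}(1+O(t^{-\tilde\tau}))$.
\end{itemize}
All error terms are uniform because $r\sim t$ on $\Sigma_t$.

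\textbf{Main obstacle.} The hard step is the uniformity of the $O_2$ remainder in Lemma~\ref{lem:2.2} up to the noncompact boundary $\hat\partial M'$. The barrier argument only gives $C^0$ control, and the second-derivative control is deferred to a Chru\'sciel-type argument. I will handle it by rescaling: set $u_t(y)=t^{a}(1-u(ty))$ on the half-annulus $\{1/2<|y|<2\}\cap\mathbb{R}^3_+$. The rescaled metric is $\bar g+O(t^{-\tau})$ in $C^2$, and $u_t$ solves a $p$-Laplace equation with conormal boundary condition. Apply boundary Schauder/$C^{1,\alpha}$ estimates for the oblique (Neumann) problem, valid because $|\nabla u_t|$ is bounded below, so the equation is uniformly elliptic there. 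Apply them to the difference between $u_t$ and the model $\frac{\mathfrak{c}_p}{a}|y|^{-a}$; that model satisfies the Euclidean Neumann condition exactly on the flat boundary. This yields the $C^2$ bound $O(t^{-\tilde\tau})$ up to $\{y^3=0\}$, which is all that (a)--(c) require.
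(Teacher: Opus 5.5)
Your proposal is correct and is essentially the paper's argument, which it delegates to Lemma 5.2 of the authors' earlier compact-boundary paper. You locate $\Sigma_t$ at $r=t(1+O(t^{-\tilde\tau}))$, read off $|\nabla u|=\mathfrak{c}_p t^{-a-1}(1+O(t^{-\tilde\tau}))$ and $H=\frac{2}{t}(1+O(t^{-\tilde\tau}))$ pointwise from the $O_2$ expansion of Lemma~\ref{lem:2.2}, and integrate against the area. Your closing rescaling paragraph goes beyond the paper, which only asserts the $C^2$ remainder up to $\hat\partial M'$. The lower gradient bound that paragraph needs follows from interpolating the $C^0$ closeness against uniform $C^{1,\alpha}$ boundary estimates, and the radial-graph description needs only $\partial_r u>0$, not the Neumann condition.
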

		%%%%%%
		%%%%%%
		
	This combining with Lemma 5.3 and 5.4 of \cite{Xia--Yin--Zhou--2024} yields that
\begin{lemma}
			\begin{equation} \label{equ:LimitofF1}
				\lim\limits_{t\rightarrow+\infty}
				\left( 2\pi-\mathfrak{c}_{p}^{-2} t^{2a}(1+\frac{m}{t})^{4a} \int_{\Sigma_{t}} |\nabla u|^2   \right)= 0.
			\end{equation}
			\begin{equation} \label{equ:LimitofF2}
				\lim_{t \rightarrow \infty} \frac{\alpha(t)}{\eta(t)} \int_{\Sigma_{t}} \left( \frac{H}{2} - \eta(u) |\nabla u| \right)^{2} =0
			\end{equation}
		\end{lemma}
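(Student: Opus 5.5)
Both limits will follow from the asymptotics just established in \eqref{equ:Area}--\eqref{equ:IntegrateNablauSquare} and from \eqref{equ:EstimateAlphaDevidedByEta}, together with the pointwise expansion of Lemma~\ref{lem:2.2}.

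\textbf{First limit \eqref{equ:LimitofF1}.} By \eqref{equ:IntegrateNablauSquare},
\[
\mathfrak{c}_p^{-2}t^{2a}\Bigl(1+\frac{m}{t}\Bigr)^{4a}\int_{\Sigma_t}|\nabla u|^2
=2\pi\Bigl(1+\frac{m}{t}\Bigr)^{4a}\bigl(1+O(t^{-\tilde\tau})\bigr)
=2\pi+O(t^{-\tilde\tau}),
\]
so the difference tends to $0$.

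\textbf{Second limit \eqref{equ:LimitofF2}.} Two facts are needed.
\begin{itemize}
\item The prefactor satisfies $\alpha/\eta\sim C_2\mathfrak{c}_p t$ by \eqref{equ:EstimateAlphaDevidedByEta}.
\item It therefore suffices to show $\int_{\Sigma_t}(H/2-\eta|\nabla u|)^2=o(t^{-1})$.
\end{itemize}
The plan is to prove the pointwise bound $H/2-\eta(t)|\nabla u|=O(t^{-1-\tilde\tau})$ on $\Sigma_t$. Integrating this over an area $\sim 2\pi t^2$ gives $O(t^{-2\tilde\tau})$, and multiplying by $t$ yields $O(t^{1-2\tilde\tau})$. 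This vanishes only if $\tilde\tau>1/2$. That is allowed: since $\tau>1/2$, we may choose $\tilde\tau\in(1/2,\min\{\tau,1\})$, and this is exactly where the hypothesis $\tau>1/2$ enters.

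For the pointwise bound, differentiate the $O_2$ expansion of Lemma~\ref{lem:2.2}.
\begin{itemize}
\item The gradient is $|\nabla u|=\mathfrak{c}_p r^{-a-1}(1+O(r^{-\tilde\tau}))$.
\item Since $\sigma=O_2(r^{-\tau})$, the Hessian equals its Euclidean value plus $O(r^{-a-2-\tilde\tau})$.
\item Formula \eqref{equ: H} then gives $H=\frac{2}{r}(1+O(r^{-\tilde\tau}))$.
\item On $\Sigma_t$ we have $r=t(1+O(t^{-\tilde\tau}))$, because $u=f(t)$ and $f(t)=1-\frac{\mathfrak{c}_p}{a}t^{-a}(1+O(t^{-1}))$.
\item Also $\eta(t)=\mathfrak{c}_p^{-1}t^a(1+O(t^{-1}))$.
\end{itemize}
Combining these,
\[
\eta|\nabla u|=t^{-1}\bigl(1+O(t^{-\tilde\tau})\bigr),
\qquad
\frac{H}{2}=t^{-1}\bigl(1+O(t^{-\tilde\tau})\bigr),
\]
so the leading terms cancel. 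This is essentially Lemma~5.4 of \cite{Xia--Yin--Zhou--2024}, with $4\pi$ replaced by $2\pi$ and the free-boundary half-sphere area $2\pi t^2$.

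\textbf{Main obstacle.} The delicate point is the precise cancellation in the second limit. The errors must be uniformly $O(t^{-1-\tilde\tau})$ up to $\partial\Sigma_t\subset\hat\partial M'$, which requires the $C^2$ part of the expansion in Lemma~\ref{lem:2.2} to hold up to the noncompact boundary. I would take this from the barrier and Schauder argument there, with the boundary estimates for the Neumann condition. Once that holds, the rest is bookkeeping.
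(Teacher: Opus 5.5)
Your proposal is correct and follows essentially the same route as the paper. The paper simply combines \eqref{equ:Area}--\eqref{equ:IntegrateNablauSquare} with Lemmas 5.3 and 5.4 of \cite{Xia--Yin--Zhou--2024}, which amount to the argument you give: the first limit comes directly from the asymptotics of $\int_{\Sigma_t}|\nabla u|^2$, and the second from the pointwise cancellation $H/2-\eta|\nabla u|=O(t^{-1-\tilde\tau})$ together with $\alpha/\eta=O(t)$ and the choice $\tilde\tau\in(1/2,\min\{\tau,1\})$. You correctly identify that this choice is where $\tau>1/2$ enters, and that the $C^2$ expansion of Lemma~\ref{lem:2.2} must hold uniformly up to $\hat\partial M'$.
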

		%%%%%%
		
		%%%%%%
		%%%%%%
		
		%%%%%%
		\begin{lemma}Assume $C_2\ge 0$. Then
			%Suppose that $u$ is a solution of \eqref{p-H},
			\begin{equation} \label{equ:LimitofF3}
				\lim_{t \rightarrow \infty} \frac14\frac{\alpha(t)}{\eta(t)}\left( 8 \pi - \int_{ \Sigma_t} H^{2}   \right) \le 8 \pi C_{2}\mathfrak{c}_{p} \mathfrak{m}_{ABL}.
			\end{equation}
			Furthermore, when $C_{2}=0$,
			$$
			\lim\limits_{t \rightarrow \infty} \frac14\frac{\alpha(t)}{\eta(t)}\left( 8 \pi - \int_{  \Sigma_t} H^{2} \right) =0.
			$$
		\end{lemma}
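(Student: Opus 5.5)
The plan is to reduce the statement to an asymptotic comparison between $\int_{\Sigma_t}H^2$ and the mass $\mathfrak{m}_{ABL}$ along the large level sets. First, by \eqref{equ:EstimateAlphaDevidedByEta}, $\frac{\alpha(t)}{\eta(t)} = C_2\mathfrak{c}_p t(1+\frac mt)^2 + o(1)$. We then need two facts. The first is that $8\pi-\int_{\Sigma_t}H^2 = O(t^{-\tilde\tau})$, which follows from the expansion of $u$ in Lemma~\ref{lem:2.2}. The second is that
\[
\limsup_{t\to\infty} t\Bigl(8\pi-\int_{\Sigma_t}H^2\Bigr)\le 32\pi\,\mathfrak{m}_{ABL}.
\]
Granting both, and since $C_2\ge0$, we get
\[
\lim \tfrac14\tfrac{\alpha}{\eta}\Bigl(8\pi-\int H^2\Bigr) \le \tfrac14 C_2\mathfrak{c}_p\cdot 32\pi\,\mathfrak{m}_{ABL} = 8\pi C_2\mathfrak{c}_p\mathfrak{m}_{ABL}.
\]
When $C_2=0$, we have $\alpha/\eta = o(1)$, and the $O(t^{-\tilde\tau})$ bound makes the limit zero.

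For the key estimate I would follow Lemma 5.4 of \cite{Xia--Yin--Zhou--2024}, adapted to the free boundary. Write $\Sigma_t$ as a graph over the coordinate half-sphere $S^2_{+,\rho}$ with $\rho\sim t$, using the $O_2$ expansion of $u$. The level sets are then $C^2$-close to coordinate half-spheres, with $H = \frac2\rho + O(\rho^{-1-\tilde\tau})$. This yields $\int H^2 = 8\pi + O(t^{-\tilde\tau})$, which is the first fact.

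For the mass bound I would use the Hawking-mass type quantity $\sqrt{|\Sigma_t|/(32\pi)}\,(1-\frac1{8\pi}\int H^2)$ adapted to half spheres. I would run the standard argument: use the Gauss equation, $2\mathrm{Ric}(\nu,\nu)=R-2K+\frac12H^2-|\mathring h|^2$, together with Gauss--Bonnet with boundary. On a connected free-boundary disk (Lemma~\ref{lem:2.3}) this gives $\int K+\int_{\partial}h_{\tau\tau}=2\pi$. The result expresses $\int_{\Sigma_t}H^2$ through the flux $\int_{\Sigma_t}(\partial_jg_{ij}-\partial_ig_{jj})\nu^i$ plus the boundary term $\int_{\partial\Sigma_t} g_{i3}x^i/|x|$ coming from the geodesic curvature of $\partial\Sigma_t$ in $\hat\partial M'$. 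Comparing with the definition of $\mathfrak{m}_{ABL}$ gives
\[
t\Bigl(8\pi-\int H^2\Bigr)\le 32\pi\,\mathfrak{m}_{ABL}+o(1).
\]
The term $-\int|\mathring h|^2\le0$ accounts for the inequality, while $R\ge0$ and $H_{\hat\partial M'}\ge0$ give integrable remainders.

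The main obstacle is this last identification, specifically capturing the boundary integral of $g_{i3}$ from $\int_{\partial\Sigma_t}h_{\tau\tau}$ with the correct constant. The approach I would try first is to compute $h_{\tau\tau}$ on $\partial\Sigma_t$ perturbatively, using the formula for $\mu$ from Step 2 of the proof of Lemma~\ref{lem:2.2}. An alternative is to double across $\hat\partial M'$, in the spirit of \cite{Eichmair--Koerber--2023}, and reduce to the closed case of \cite{Xia--Yin--Zhou--2024}. With $\tau>\frac12$, the quadratic errors $O(t^{1-2\tau})$ vanish in the limit.
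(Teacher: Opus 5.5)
Your reduction is the same as the paper's. Once $\limsup_{t\to\infty}t\bigl(8\pi-\int_{\Sigma_t}H^2\bigr)\le 32\pi\,\mathfrak{m}_{ABL}$ is known, combining it with \eqref{equ:EstimateAlphaDevidedByEta} gives the lemma, and the case $C_2=0$ follows from $H=\frac2t(1+O(t^{-\tilde\tau}))$ and the area expansion, exactly as you say. The gap is that the key estimate is never established. Integrating the Gauss equation and applying Gauss--Bonnet on the disk $\Sigma_t$ (for large $t$) gives the exact identity
\begin{equation*}
t\Bigl(8\pi-\int_{\Sigma_t}H^2\Bigr)=-4t\int_{\Sigma_t}G(\nu,\nu)+4t\int_{\partial\Sigma_t}h^{\hat\partial M'}_{\tau\tau}-2t\int_{\Sigma_t}|\mathring{h}|^2,\qquad G:=\operatorname{Ric}-\tfrac{R}{2}\,g .
\end{equation*}
After discarding the last term you still need
\begin{equation*}
\lim_{t\to\infty}\Bigl(-t\int_{\Sigma_t}G(\nu,\nu)+t\int_{\partial\Sigma_t}h^{\hat\partial M'}_{\tau\tau}\Bigr)=8\pi\,\mathfrak{m}_{ABL}.
\end{equation*}
This is an Einstein-tensor formula for the ABL mass with a boundary correction, evaluated on the non-coordinate surfaces $\Sigma_t$, and it is the entire content of the lemma. ``Comparing with the definition of $\mathfrak{m}_{ABL}$'' does not supply it.

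Each of the two terms on the left is only $O(t^{1-\tau})$, which need not tend to zero, or even converge, for $\tau<1$. So a perturbative computation of $h_{\tau\tau}$ cannot fix the constants by itself. You would need two further ingredients:
\begin{itemize}
\item the exact divergence structure of the linearization of $R-2\operatorname{Ric}(\nu,\nu)$ on half-spheres, whose tangential-divergence part leaves a boundary integral on $\partial\Sigma_t$ that must combine with $t\int_{\partial\Sigma_t}h^{\hat\partial M'}_{\tau\tau}$ into $\int_{\partial\Sigma_t}g_{i3}x^i/|x|$ up to $o(1)$;
\item a flux argument to pass from coordinate half-spheres to $\Sigma_t$, e.g.\ via $\operatorname{div}\,G(X,\cdot)=-\frac R2+O(r^{-2-2\tau})$ for $X=x^i\partial_i$, together with a Codazzi-type treatment of the flux through $\hat\partial M'$.
\end{itemize}
Only the integrability of $R$ and $H_{\hat\partial M'}$ enters here, not their signs. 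Doubling does not get around this: the reflected metric is only Lipschitz across $\hat\partial M'$ unless $h^{\hat\partial M'}=0$, and smoothing it changes $g$, hence $u$ and $\Sigma_t$.

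The paper avoids all of this. It uses the pointwise expansion of \cite[Lemma 2.5]{Agostiniani--Mantegazza--Mazzieri--Oronzio--2023},
\begin{equation*}
H_g^2\,d\sigma_g=\Bigl(H_{\bar g}^2-\tfrac2t\operatorname{div}_{\Sigma_t}\omega^T-\tfrac2t\,\delta^{ij}(\partial_ig_{jk}-\partial_kg_{ij})\nu^k+O(t^{-2-2\beta})\Bigr)d\sigma .
\end{equation*}
Here the tangential divergence integrates directly, by the divergence theorem on $\Sigma_t$, to the boundary term $\frac12\int_{\partial\Sigma_t}g_{i\nu}\mu^i$, and the middle term is the ABL flux integrand. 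The inequality then comes from the sharp Euclidean Willmore inequality for free boundary surfaces in $\mathbb{R}^3_+$, namely $\int_{\Sigma_t}H_{\bar g}^2\,d\sigma\ge 8\pi$ \cite{Jia--Wang--Xia--Zhang--2024}, rather than from discarding $|\mathring{h}|^2$. To keep your Gauss--Bonnet route, you must prove or cite the boundary-corrected Einstein-tensor formula for $\mathfrak{m}_{ABL}$ displayed above; as it stands, the decisive step of the proposal is unproved.
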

		%%%%%%
		
		%%%%%%
		\begin{proof}
	{\bf Step 1.}		In the same spirit of \cite[Lemma 2.5]{Agostiniani--Mantegazza--Mazzieri--Oronzio--2023}, we proved that
			\begin{eqnarray}\label{limit-ammo}
				\lim_{t \rightarrow \infty}M_p(t)=\lim_{t \rightarrow \infty} \frac{t}{4}\left( 8 \pi - \int_{ \left\{ u=f(t) \right\} } H^{2} \right) \le 8 \pi \mathfrak{m}_{ABL}.
			\end{eqnarray}

We are going to compare the expression of $M_p$ with an analogous expression in which the geometric quantities are computed with respect to the Euclidean background metric.
In order to do that, we work in an asymptotically flat coordinate chart. As the unit normal vectors to a regular level set $\Sigma_t$ are given by
\begin{equation*}
\nu_g=\frac{\nabla u}{|\nabla u|_g},\ \ \ \ \nu=\frac{\nabla u}{|\nabla u|_{\bar g}}
\end{equation*}
the mean curvatures can be  computed,
\begin{equation*}
H_g=(g^{ij}-\nu^i\nu^j)\frac{\nabla_i\nabla_j u}{|\nabla u|_g}\ \ \ \ H_{\bar g}=(\delta^{ij}-\nu^i\nu^j)\frac{\nabla_i\nabla_j u}{|\nabla u|}.
\end{equation*}

In \cite[Lemma 2.5]{Agostiniani--Mantegazza--Mazzieri--Oronzio--2023}, they prove that
\begin{equation*}
H^2_gd\sigma_g=\left(H^2_{\bar g}-\frac2t{\rm div}_{\Sigma_t}\omega^T-\frac2t\delta^{ij}(\partial_ig_{jk}-\partial_k g_{ij})\nu^k+O(t^{-2-2\beta})\right)d\sigma,\ \ \ \ {\rm on}\ \ \Sigma_t,
\end{equation*}
where $\omega=\gamma_{jk}\nu^jdx^k=\omega^{T}+\gamma(\nu,\nu)\frac{du}{|\nabla u|}$.

Then, we have
\begin{equation*}
\begin{aligned}
M_p(t)=&\frac{t}4\left(8\pi-\int_{\Sigma_t}H^2_{\bar g}d\sigma\right)\\
=&\frac{t}4\left(8\pi-\int_{\Sigma_t}H^2_{\bar g}d\sigma\right)+\frac{1}2\int_{\partial\Sigma_t}g_{i\nu}\mu^i+\frac12\int_{\Sigma_t}\delta^{ij}(\partial_ig_{jk}-\partial_k g_{ij})\nu^k+O(t^{1-2\beta})
\end{aligned}
\end{equation*}
Thus, for $\beta>1/2$, by the definition of ABL mass and Willmore inequality in \cite{Jia--Wang--Xia--Zhang--2024},  we have $$\lim_{t\rightarrow\infty}M_p(t)\leq8\pi\mathfrak{m}_{ABL}.$$

	{\bf Step 2.}		The assertion follows from \eqref{limit-ammo} and \eqref{equ:EstimateAlphaDevidedByEta}.
			%	i.e. $s$ and $t$ are equivalent infinitesimal quantities.
			%	This lemma can be obtained directly by the above lemma.
			When $C_{2}=0$, recalling \eqref{equ:Area} and $H=\frac{2}{r} ( 1 + O(r^{- \tilde{\tau}})$, we get
			$$
			\lim\limits_{t \rightarrow \infty} \left( 8 \pi - \int_{ \Sigma_t} H^{2}\right) =0.
			$$
			The second assertion is proved by using \eqref{equ:EstimateAlphaDevidedByEta}.
		\end{proof}	

Combining  \eqref{xeq-limit6}, \eqref{equ:LimitofF1},  \eqref{equ:LimitofF2} and  \eqref{equ:LimitofF3}, we get the limit of $F(t)$.
		%%%%%%
		\begin{proposition}\label{limit-F}
			Assume $C_2\ge 0$. Then
			\begin{equation} \label{equ:LimitofF}
				\begin{aligned}
					\lim\limits_{t\rightarrow+\infty} F(t)
					\ge -8 \pi C_{2}\mathfrak{c}_{p} (\mathfrak{m}_{ABL}-m).
				\end{aligned}
			\end{equation}
			Furthermore, when $C_{2}=0$,
			$$
			\lim\limits_{t\rightarrow+\infty} F(t)=0.
			$$
		\end{proposition}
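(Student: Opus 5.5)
The plan is to pass to the limit term by term in the rewritten form \eqref{equ:F1} of $F(t)$, using the limits collected just above. As $t\to\infty$ along regular values, the four terms of \eqref{equ:F1} behave as follows.

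\emph{First term.} It is
\[
-\Bigl(C_{2}\mathfrak{c}_{p}\tfrac{2m}{a}+C_{1}\tfrac{I_a(m/t)}{I_a(m/r_0)}\Bigr)\Bigl(2\pi-\mathfrak{c}_{p}^{-2}t^{2a}(1+\tfrac mt)^{4a}\int_{\Sigma_t}|\nabla u|^2\Bigr).
\]
The prefactor is bounded, since $I_a(m/t)\to 0$. The second factor tends to $0$ by \eqref{equ:LimitofF1}. Hence this term tends to $0$.

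\emph{Second term.} It is $-\frac{\alpha}{4\eta}\int_{\Sigma_t}(H-2\eta|\nabla u|)^2=-\frac{\alpha}{\eta}\int_{\Sigma_t}(\frac H2-\eta|\nabla u|)^2$, which tends to $0$ by \eqref{equ:LimitofF2}.

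\emph{Third term.} It is $-\frac14\frac{\alpha}{\eta}(8\pi-\int_{\Sigma_t} H^2)$. By \eqref{equ:LimitofF3}, for $C_2\ge0$ its $\limsup$ is at most $8\pi C_2\mathfrak{c}_{p}\mathfrak{m}_{ABL}$, so after the minus sign its $\liminf$ is at least $-8\pi C_2\mathfrak{c}_{p}\mathfrak{m}_{ABL}$.

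\emph{Fourth term.} It is $-2\pi\frac{\alpha}{\eta}\bigl(\frac{(1-m/t)^2}{(1+m/t)^2}-1\bigr)$, which tends to $+8\pi C_2\mathfrak{c}_{p} m$ by \eqref{xeq-limit6}.

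Adding the four contributions gives
\[
\liminf_{t\to\infty}F(t)\ge -8\pi C_2\mathfrak{c}_{p}(\mathfrak{m}_{ABL}-m).
\]

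The statement speaks of $\lim F$, and this limit exists. By Theorem \ref{thm:Monot} (the monotonicity corollary), $F$ is non-increasing on the set of regular values $\mathcal T$. That set has full measure by Sard-type results for $p$-harmonic functions and is unbounded, because $|\nabla u|\neq0$ near infinity by Lemma \ref{lem:2.2}. So the limit exists in $[-\infty,\infty)$, and the $\liminf$ bound makes it finite and gives \eqref{equ:LimitofF}.

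For $C_2=0$, every term in the decomposition has an actual limit. The first, second and fourth terms tend to $0$ (the fourth limit $8\pi C_2\mathfrak{c}_{p}m$ vanishes). The third tends to $0$ by the second assertion of \eqref{equ:LimitofF3}. Hence $\lim F=0$.

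The step needing the most care is that the quantities in \eqref{equ:LimitofF1}--\eqref{equ:LimitofF3} are applied along $t$ ranging over regular values only, and that the bounded prefactors in the first two terms do not spoil the convergence. The prefactor in the first term is bounded as noted, while \eqref{equ:LimitofF2} already includes the $\alpha/\eta$ factor. Everything else is bookkeeping of signs.
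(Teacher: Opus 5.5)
Your proposal is correct and follows the paper's proof, which likewise passes to the limit term by term in \eqref{equ:F1} using \eqref{xeq-limit6}, \eqref{equ:LimitofF1}, \eqref{equ:LimitofF2} and \eqref{equ:LimitofF3}. Your extra argument that the limit exists, via monotonicity of $F$ on the unbounded set $\mathcal{T}$, fills in a point the paper leaves implicit, since \eqref{equ:LimitofF3} really only controls a $\limsup$. Note that this argument needs $\alpha\ge 0$, i.e.\ the full hypothesis \eqref{alpha>0-assumpt} of Theorem \ref{thm:Monot} and not just $C_2\ge 0$; the Sard-type full-measure claim is not needed.
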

%%%%%%%%%%%%%%%%%%%
\section{Applications and proof of main results}\label{Sec:06}
The proof of the rigidity part in Theorem \ref{thm:Monot} is similarly with Section 4 in \cite{Xia--Yin--Zhou--2024}.  We omit the proof in here.

		Combining Theorem \ref{thm:Monot} and Proposition \ref{limit-F}, we get the following corollary.
		%%%%%%
		\begin{corollary} \label{Cor:6.1}
Let $(M',g)$ be a $3$-dimensional, complete, one-end asymptotically flat half space with boundary $\partial M'=\hat\partial M'\bigcup\Sigma$ that has nonnegative scalar curvature and $H^{\hat\partial M'}\geq0$, where $\Sigma$ is a free boundary. Assume that $M'$ is simply connected. Let $p\in (1, 3)$ and $u$ be the weak solution to \eqref{p-H}. {For any $k\in(-1,0)\cup(0,1]$,
				let $m= \operatorname{sgn}(k) \left(I_a(k)\mathfrak{c}_{p}\right)^{\frac{1}{a}}$, $r_{0}=\frac{m}{k}=|k|^{-1}\left(I_a(k)\mathfrak{c}_{p}\right)^{\frac{1}{a}}$.} Let $\a, \b, \g$ be three one-variable functions given by \eqref{ode-solution1} with  $$
			C_{2} \ge 0, \hbox{ and }
			\left( C_{2}\mathfrak{c}_{p}+C_{1}\frac{a}{2m} \right)
			\eta(r_{0}) \ge C_{1}\frac{1}{2m}. 	$$	Then	we have that
			\begin{equation*}
				\begin{aligned}
					F(t)=2 \pi \gamma(t) + \alpha(t) \int_{\Sigma_{t}} H |\nabla u|   + \beta(t) \int_{\Sigma_{t}} |\nabla u|^2
					\ge  -C_{2}\mathfrak{c}_{p}8\pi(\mathfrak{m}_{ABL}-m).
				\end{aligned}
			\end{equation*}
			Moreover, equality  holds if and only if $(M', g)$ is isometric to  the  Schwarzschild half space of mass $m$ outside a rotationally symmetric half ball, $(\mathcal{M}_{m, r_0}^{3,+},  g_{m})$.	
		\end{corollary}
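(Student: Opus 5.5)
The plan is to combine the monotonicity of Theorem \ref{thm:Monot} with the asymptotic limit of Proposition \ref{limit-F}. The rigidity statement is handled separately by the equality case of Theorem \ref{thm:Monot}.

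\textbf{Step 1 (inequality).} Fix $k\in(-1,0)\cup(0,1]$ and constants $C_1,C_2$ satisfying \eqref{alpha>0-assumpt}.

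First, Sard's theorem applied to $u\in C^{1,\alpha}$ gives regular values of $u$ of full measure. Since $f$ is a smooth increasing diffeomorphism of $[r_0,\infty)$ onto $[0,1)$, the set $\mathcal{T}$ has full measure in $[r_0,\infty)$. It is in particular unbounded, and it accumulates at every point, including $r_0$.

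Given $t\in\mathcal{T}$, Theorem \ref{thm:Monot} gives $F(t)\ge F(s)$ for every $s\in\mathcal{T}$ with $s>t$. Let $s\to\infty$ along $\mathcal{T}$. Proposition \ref{limit-F} then yields
\[
F(t)\ge \lim_{s\to\infty}F(s)\ge -8\pi C_2\mathfrak{c}_p(\mathfrak{m}_{ABL}-m).
\]
The only subtlety is that Proposition \ref{limit-F} is stated for limits along regular values. Its proof uses only the expansion of Lemma \ref{lem:2.2}, and for $t$ large every level set is regular by that expansion, since $|\nabla u|\sim \mathfrak{c}_p r^{-a-1}>0$ outside a compact set. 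So the limit is taken along all large $t$ and the bound holds for every $t\in\mathcal{T}$.

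\textbf{Step 2 (equality).} If $(M',g)$ is $(\mathcal{M}^{+,3}_{m,r_0},g_m)$, then Proposition \ref{Appendix: basic_solution} gives $u=f(r)$ with the same $f$ as in \eqref{equ:f}, because $m$ and $r_0$ are tied to $\mathfrak{c}_p$ by \eqref{equ:m}. Hence $\Sigma_t=\{r=t\}$, and by the mean curvature proposition $H/(2|\nabla u|)=\eta(t)$. The square term in \eqref{equ:F1} therefore vanishes and $F$ is constant.

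For the value of that constant, compute the limit exactly in the model. Here $t(8\pi-\int H^2)/4\to 8\pi m$ directly, with $\mathfrak{m}_{ABL}=m$ for $g_m$, so the inequality in \eqref{equ:LimitofF3} is an equality. Thus $F\equiv -8\pi C_2\mathfrak{c}_p(\mathfrak{m}_{ABL}-m)=0$, and equality holds.

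Conversely, suppose equality holds at some $t\in\mathcal{T}$. Then both inequalities in the chain of Step 1 are equalities, so $F(s)=F(t)$ for all $s\in\mathcal{T}$ with $s>t$, i.e.\ $F$ is constant on $\mathcal{T}\cap[t,\infty)$. The rigidity part of Theorem \ref{thm:Monot} then identifies $(M',g)$ with the Schwarzschild half-space. It is quoted as in Section 4 of \cite{Xia--Yin--Zhou--2024}, with the extra boundary terms handled as follows. Constancy forces $\int_{\partial\Sigma_t}H^{\hat\partial M'}=0$ and $R=0$, equality in the Gauss--Bonnet estimate, $\mathring h=0$, $\nabla_{\Sigma}|\nabla u|=0$, and $H=2\eta|\nabla u|$. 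These give umbilic level sets with constant $|\nabla u|$, hence a warped product. Reading off the ODE together with \eqref{equ:f} identifies the warping with $g_m$.

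\textbf{Main obstacle.} The main difficulty is the rigidity direction. Constancy of $F$ is obtained only on an interval $[t,\infty)$, whereas the rigidity argument needs it down to $\Sigma$. I would first show that no critical points of $u$ exist for $u\ge f(t)$, by the regularized inequality of Proposition \ref{lem:3.4} with the $\ve$-error vanishing. This gives the Schwarzschild structure on $\{u\ge f(t)\}$. The structure would then be propagated down to $\Sigma$ by analytic continuation, since $u$ is analytic where $\nabla u\neq0$ and the rotational profile persists. If that step fails, I would instead interpret equality as holding for all $t\in\mathcal{T}$, in the same sense as Theorem \ref{thm:Monot}.
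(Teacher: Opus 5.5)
Your proposal is correct and follows the paper's route. The inequality is the monotonicity of $F$ (Theorem~\ref{thm:Monot}) combined with the asymptotic bound of Proposition~\ref{limit-F}, and the rigidity comes from the equality case of Theorem~\ref{thm:Monot}, whose proof the paper also omits; your explicit check that $F\equiv 0$ on the model is a useful addition.

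The analytic-continuation step in your ``main obstacle'' would not work as stated. For a merely smooth metric $u$ is smooth but not analytic, so nothing carries the Schwarzschild structure from $\{u\ge f(t)\}$ inward. That step is also unnecessary. The equality case the paper actually uses (Corollaries~\ref{prop:6.2} and~\ref{prop:6.3}) is at $t=r_0$, i.e.\ on $\Sigma$ itself, and there your chain already forces $F$ to be constant on all of $\mathcal{T}$. This is exactly your fallback reading.
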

		%%%%%%
		
		Next, we consider the following cases, including $k=1$, $0<k<1$ and $-1<k<0$.
		
		\noindent{\bf Case 1: $k=1$.}

		In this case, $m$ and $r_{0}$ satisfy  $m= \left(I_a(1)\mathfrak{c}_{p}\right)^{\frac{1}{a}}$ and $r_0=m$. We see that
		%%%%%%
		\begin{eqnarray*}
			&&\eta(m)=0,\quad \alpha(m) =-4C_{1},\\
			&&\beta(m)
			= \left( C_{2}\mathfrak{c}_{p}\frac{2m}{a}+C_{1} \right)
			2^{4a} (I_{a}(1))^{2},\\
			&&\gamma(m)
			= - \left( C_{2}\mathfrak{c}_{p}\frac{2m}{a}+C_{1} \right) .
		\end{eqnarray*}
		%%%%%%
		Hence we have the following consequence.
		\begin{corollary}\label{prop:6.2}
			Under assumptions of Corollary \ref{Cor:6.1}.   Let $\a, \b, \g$ be three one-variable functions given by \eqref{ode-solution1} with $$C_2\ge 0,\quad C_1\le 0.$$
			Then we have that
			\begin{equation}\label{ineq-cor}
				\begin{aligned}
					&- 2 \pi \left( C_{2}\mathfrak{c}_{p}\frac{2m}{a}+C_{1} \right)
					- 2C_{1} \int_{\Sigma} H |\nabla u|
					+ \left( C_{2}\mathfrak{c}_{p}\frac{2m}{a}+C_{1} \right) 2^{4a} (I_{a}(1))^{2} \int_{\Sigma} |\nabla u|^2 \\
					\ge & -C_{2}\mathfrak{c}_{p}8\pi(\mathfrak{m}_{ABL}-m).
				\end{aligned}
			\end{equation}
			Moreover, equality holds if and only if $(M', g)$ is isometric to  the  Schwarzschild half space of mass $m$.
		\end{corollary}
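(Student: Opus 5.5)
The plan is to evaluate the inequality of Corollary \ref{Cor:6.1} at the initial level $t=r_0=m$, that is, on $\Sigma=\Sigma_{r_0}=\{u=0\}$, and substitute the explicit boundary values of $\alpha,\beta,\gamma$ at $t=m$ listed just before the statement.

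\textbf{Admissibility of the constants.} When $k=1$ we have $r_0=m$ and $\eta(r_0)=\eta(m)=0$, since $\eta$ contains the factor $1-\frac{m}{t}$. The admissibility condition \eqref{alpha>0-assumpt} therefore reduces to $C_2\ge 0$ together with $0\ge C_1\frac{1}{2m}$. Because $m>0$, this is exactly $C_1\le 0$. Hence the hypotheses $C_2\ge0$, $C_1\le0$ put us within the scope of Corollary \ref{Cor:6.1}, and in particular $\alpha\ge0$ by the remark following Theorem \ref{thm:Monot}.

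\textbf{Monotonicity and the starting value.} Theorem \ref{thm:Monot} shows that $F$ is non-increasing on regular values. Proposition \ref{limit-F} gives $\lim_{t\to\infty}F(t)\ge -8\pi C_2\mathfrak c_p(\mathfrak m_{ABL}-m)$, so $F(r_0)\ge$ this bound.

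The level $t=r_0$ corresponds to $u=0$. This is $\Sigma$ itself, and it is a regular level by the Hopf lemma, since $u>0$ inside and $u=0$ on $\Sigma$; it can also be reached as a limit of regular levels $t\downarrow r_0$, using the $C^{1,\alpha}$ regularity up to $\Sigma$ from Lemma \ref{lem:2.2}. Here one has to be a little careful. The quantity $\int_{\Sigma_t}H|\nabla u|$ involves second derivatives, so I would either use higher regularity near $\Sigma$ (where $|\nabla u|\ne0$, the equation is uniformly elliptic and $u$ is smooth up to $\Sigma$ and its free boundary corner) or apply Sard plus continuity of $F$ near $t=r_0$.

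\textbf{Substitution.} Insert $\gamma(m)=-(C_2\mathfrak c_p\frac{2m}{a}+C_1)$, $\alpha(m)$, and $\beta(m)=(C_2\mathfrak c_p\frac{2m}{a}+C_1)2^{4a}I_a(1)^2$. Computing $\alpha(m)=m\cdot 4\cdot\{-C_1/(2m)\}=-2C_1$ (the value $-4C_1$ displayed before the statement should be checked against this) yields exactly \eqref{ineq-cor}.

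\textbf{Equality case.} If equality holds in \eqref{ineq-cor}, then $F(r_0)=\lim F$ and the bound in Proposition \ref{limit-F} is attained, so $F$ is constant. The rigidity part of Theorem \ref{thm:Monot} then gives the Schwarzschild half space, which is $\mathcal M^{+,3}_{m,m}$ since $r_0=m$. Conversely, on Schwarzschild all terms are explicit and equality holds; this is the Corollary \ref{Cor:6.1} equality case.

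The main obstacle I expect is the justification that $F$ extends continuously to $t=r_0$, i.e. that boundary regularity at $\Sigma$, including the corner $\partial\Sigma$, permits passing to the limit.
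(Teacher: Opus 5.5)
Your proposal is correct and follows the paper's route. Like the paper, you specialize Corollary~\ref{Cor:6.1} to $k=1$ and $t=r_0=m$, where $\eta(m)=0$ reduces the admissibility condition to $C_2\ge 0$, $C_1\le 0$, and then substitute the boundary values of $\alpha,\beta,\gamma$.

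Your value $\alpha(m)=-2C_1$ is the right one: it is what \eqref{ode-solution1} gives and what the stated inequality uses, so the displayed $-4C_1$ is a slip. Your care about evaluating $F$ at the boundary level $t=r_0$ goes beyond the paper, which simply evaluates there.
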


\noindent{\it Proof of Theorem \ref{Thm:1.01}.}
		
		Let $C_{2}=0,C_{1}=-1$ in \eqref{ineq-cor},  we obtain \eqref{geom-ineq-1}. Let $C_{2}=\mathfrak{c}_{p}^{-1}$ and $C_{1}=0$ in \eqref{ineq-cor}, we obtain \eqref{geom-ineq-2}. \qed%Inequality \eqref{geom-ineq-3} follows from  \eqref{geom-ineq-1} and \eqref{geom-ineq-2}.
		%This completes the proof of Theorem \ref{Thm:1.01}.
		
		%\begin{Corollary}
		%		Under the assumptions of \ref{Thm:1.01},
		%		\begin{equation*}
			%			\begin{aligned}
				%%%				- 2^{4a} (I_{a}(1))^{2} \int_{\Sigma} |\nabla u|^2 d \Sigma
				%\ge 0.
				%		\end{aligned}
			%	\end{equation*}
		%	\begin{equation*}
			%		\begin{aligned}
				%%%%	\end{aligned}
			%\end{equation*}
			%\begin{equation*}
			%\begin{aligned}
			%4 \pi a
			%- \int_{\Sigma} H|\nabla u| d \Sigma
			%\le 4\pi a \frac{\mathfrak{m}_{ADM}}{m}.
			%R\end{aligned}
		%\end{equation*}
		%And $'='$ holds if and only if $(M, g)$ is isometric to a spatial Schwarzschild manifold of mass $\mathfrak{m}_{ADM}>0$.
		%\end{Corollary}
		
		\
		
		\noindent{\it Proof of Theorem \ref{Thm:1.03}.}
		Since $H=0$ on $\Sigma$, we see from \eqref{geom-ineq-1} and \eqref{geom-ineq-2} that
		\begin{equation*}
			\begin{aligned}
				0\le
				2 \pi
				- 2^{4a} (I_{a}(1))^{2} \int_{\Sigma} |\nabla u|^2
				\le 4\pi a \left( \frac{\mathfrak{m}_{ABL}}{m}-1 \right) .
			\end{aligned}
		\end{equation*}
		This is \eqref{geom-ineq-6}. On the other hand,
		By H\"older's inequality,
		\begin{equation*}
			\begin{aligned}
				2\pi \mathfrak{c}_{p}^{p-1}
				= \int_{\Sigma}|\nabla u|^{p-1}
				\le \left( \int_{\Sigma}|\nabla u|^{2} \right)^{\frac{p-1}{2}} |\Sigma|^{\frac{3-p}{2}}
				\le \left( \frac{2 \pi}{2^{4a} (I_{a}(1))^{2}} \right)^{\frac{p-1}{2}} |\Sigma|^{\frac{3-p}{2}} .
			\end{aligned}
		\end{equation*}
		It follows that
		\begin{equation*}
			\sqrt{\frac{|\Sigma|}{32\pi}} \ge  \left( I_{a}(1) \mathfrak{c}_{p} \right)^{\frac{1}{a}}.
		\end{equation*}
		This is \eqref{geom-ineq-7}.
		\qed
		%%%%%%
		
		\
		
		\noindent{\bf Case 2: $-1<k<0$ and $0<k<1$.}
		
		In this case, $m$ and $r_{0}$ satisfies that $m= \operatorname{sgn}(k) \left(I_a(k)\mathfrak{c}_{p}\right)^{\frac{1}{a}}$ and $r_0=\frac{m}{k}$. We see that
		%%%%%%
		\begin{eqnarray*}
			&&\eta(r_{0})=I_{a}(k)|k|^{-a}(1+k)^{2a-1}(1-k), \\
			&&\alpha(r_{0})
			=m\frac{(1+k)^{2}}{k}
			\left\{
			\left( C_{2}\mathfrak{c}_{p}+C_{1}\frac{a}{2m} \right) \eta(r_{0}) -C_{1}\frac{1}{2m}
			\right\}
			,\\
			&&\beta(r_{0})
			= -\eta(r_{0})\alpha(r_{0})
			+ \left( C_{2}\mathfrak{c}_{p}\frac{2m}{a}+C_{1} \right) (I_{a}(k))^{2}|k|^{-2a}(1+k)^{4a} ,\\
			&&\gamma(r_{0})
			= -(I_{a}(k))^{-2}|k|^{2a}(1+k)^{-4a}\eta(r_0)\alpha(r_{0})
			- \left( C_{2}\mathfrak{c}_{p}\frac{2m}{a}+C_{1} \right)  .
		\end{eqnarray*}
		%%%%%%
		Then we have the following consequence.
		%%%%%%
		\begin{corollary}\label{prop:6.3}
			Under assumptions of Corollary \ref{Cor:6.1}.
			{For any $k\in(-1,0)\cup(0,1)$,
				let $m=\operatorname{sgn}(k) \left(I_a(k)\mathfrak{c}_{p}\right)^{\frac{1}{a}}$, $r_{0}=\frac{m}{k}=|k|^{-1}\left(I_a(k)\mathfrak{c}_{p}\right)^{\frac{1}{a}}$.} Let $\a, \b, \g$ be three one-variable functions given by \eqref{ode-solution1} with
			$$
			C_{2} \ge 0, \hbox{ and }
			\left( C_{2}\mathfrak{c}_{p}+C_{1}\frac{a}{2m} \right)
			\eta(r_{0}) \ge C_{1}\frac{1}{2m}. 			
			$$
			Then we have that
			\begin{equation}\label{ineq-cor2}
				\begin{aligned}
					&- 2 \pi \left\{ \frac{(1-k)^{2}}{(1+k)^{2}\eta(r_{0})}\alpha(r_{0})
					+ \left( C_{2}\mathfrak{c}_{p}\frac{2m}{a}+C_{1} \right) \right\}
					+ \alpha(r_{0}) \int_{\Sigma} H |\nabla u|  \\
					&+ \left( -\eta(r_{0})\alpha(r_{0})
					+ \left( C_{2}\mathfrak{c}_{p}\frac{2m}{a}+C_{1} \right) \frac{(1+k)^{2}(\eta(r_{0}))^{2}}{(1-k)^{2}} \right)  \int_{\Sigma} |\nabla u|^2  \\
					\ge & -C_{2}\mathfrak{c}_{p}8\pi(\mathfrak{m}_{ABL}-m).
				\end{aligned}
			\end{equation}
			Moreover, equality holds if and only if $(M', g)$ is isometric to   the  Schwarzschild half space of mass $m$ outside a rotationally symmetric half ball, $(\mathcal{M}_{m, r_0}^{3,+},  g_{m})$.
		\end{corollary}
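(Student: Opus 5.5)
The plan is to specialize Corollary \ref{Cor:6.1} at $t=r_0$, where $\Sigma_{r_0}=\{u=f(r_0)\}=\{u=0\}=\Sigma$. This uses $f(r_0)=1-I_a(k)/I_a(k)=0$ and the fact that $\Sigma$ is a regular level set by the Hopf lemma, so $r_0\in\mathcal T$. Corollary \ref{Cor:6.1} then gives
\[
2\pi\gamma(r_0)+\alpha(r_0)\int_\Sigma H|\nabla u|+\beta(r_0)\int_\Sigma|\nabla u|^2\ge -8\pi C_2\mathfrak c_p(\mathfrak m_{ABL}-m),
\]
together with its equality characterization. It remains to insert the explicit values of $\alpha,\beta,\gamma$ at $r_0$.

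The values of $\alpha(r_0)$, $\beta(r_0)$, $\gamma(r_0)$ listed just above are obtained by evaluating \eqref{ode-solution1} at $t=r_0$. Use $m/r_0=k$, so that $I_a(m/r_0)/I_a(k)=1$ and $r_0(1+m/r_0)^2=m(1+k)^2/k$. Use also $\mathfrak c_p^{-2}r_0^{2a}(1+k)^{4a}=(I_a(k))^2|k|^{-2a}(1+k)^{4a}$, which follows from $|m|^a=I_a(k)\mathfrak c_p$ and $r_0=m/k$. The task is then to express these coefficients through $\eta(r_0)$ alone. From $\eta(r_0)=\mathfrak c_p^{-1}r_0^a(1+k)^{2a-1}(1-k)$ we get
\[
\mathfrak c_p^{-2}r_0^{2a}(1+k)^{4a}=\frac{(1+k)^2\eta(r_0)^2}{(1-k)^2},
\]
and therefore $(I_a(k))^{-2}|k|^{2a}(1+k)^{-4a}\eta(r_0)=\frac{(1-k)^2}{(1+k)^2\eta(r_0)}$.

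Substituting these identities, $2\pi\gamma(r_0)$ becomes $-2\pi\bigl\{\frac{(1-k)^2}{(1+k)^2\eta(r_0)}\alpha(r_0)+(C_2\mathfrak c_p\frac{2m}{a}+C_1)\bigr\}$. Likewise $\beta(r_0)$ becomes $-\eta(r_0)\alpha(r_0)+(C_2\mathfrak c_p\frac{2m}{a}+C_1)\frac{(1+k)^2\eta(r_0)^2}{(1-k)^2}$. Plugging both into the inequality yields \eqref{ineq-cor2} exactly.

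The hypotheses $C_2\ge0$ and $(C_2\mathfrak c_p+C_1\frac{a}{2m})\eta(r_0)\ge C_1\frac1{2m}$ are precisely \eqref{alpha>0-assumpt}, so Theorem \ref{thm:Monot} and Proposition \ref{limit-F} apply. The equality case transfers verbatim from Corollary \ref{Cor:6.1}, whose equality statement comes from the rigidity in Theorem \ref{thm:Monot}.

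The only delicate point is bookkeeping with signs when $k<0$. There $m<0$ and $|k|^{-a}$ replaces $k^{-a}$, so I would check carefully that $r_0=m/k>0$, that $r_0^{a}=|m|^a|k|^{-a}$, and that the identity $\mathfrak c_p^{-1}r_0^{a}=I_a(k)|k|^{-a}$ holds uniformly in the sign of $k$. I would also confirm that $\eta(r_0)\neq0$ for $k\ne1$, so that dividing by $\eta(r_0)$ is legitimate; this holds because $1-k>0$. Once these checks are done, the rest is direct substitution.
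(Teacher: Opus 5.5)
Your proposal is correct and follows the paper's route: the paper also evaluates Corollary \ref{Cor:6.1} at $t=r_0$, where $\Sigma_{r_0}=\{u=0\}=\Sigma$, and substitutes the values of $\eta(r_0)$, $\alpha(r_0)$, $\beta(r_0)$, $\gamma(r_0)$ read off from \eqref{ode-solution1} using $m/r_0=k$ and $|m|^a=I_a(k)\mathfrak{c}_p$. Your identity $\mathfrak{c}_p^{-2}r_0^{2a}(1+k)^{4a}=(1+k)^2\eta(r_0)^2/(1-k)^2$ converts the paper's listed coefficients into the form of \eqref{ineq-cor2}, and your sign checks for $k<0$ and for $\eta(r_0)>0$ hold.
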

		%%%%%%
		
		\
		
		\noindent{\it Proof of Theorem \ref{Thm:1.02}.}
		Notice that for any $k \in (-1,0)\cup(0,1)$, the inequality $\frac{1-a\eta(r_{0})}{m}>0$ holds.
		Let $C_{2}=0$ and $C_{1}=-1$ in \eqref{ineq-cor2},  we obtain \eqref{geom-ineq-10}. Let $C_{2}=\mathfrak{c}_{p}^{-1}$ and $C_{1}=\frac{2m\eta(r_{0})}{1-a\eta(r_{0})}$ in \eqref{ineq-cor}, we see that $$\left( C_{2}\mathfrak{c}_{p}+C_{1}\frac{a}{2m} \right)
		\eta(r_{0})-C_{1}\frac{1}{2m}=0.$$ It follows that $\alpha(r_0)=0$. Then we easily obtain \eqref{geom-ineq-20}. \qed
		%Inequality \eqref{geom-ineq-30} follows from  \eqref{geom-ineq-10} and \eqref{geom-ineq-20}.
		%This completes the proof of Theorem \ref{Thm:1.02}.
		
		\
		
		\noindent{\it Proof of Theorem \ref{Thm:1.04}.} 	Let {$k\in(-1,1]$} be such that
		\begin{eqnarray}\label{k}
			1 - \frac{1}{8\pi} \int_{\Sigma} H^{2} = \frac{4k}{(1+k)^{2}}.
		\end{eqnarray}
		
		\noindent{\bf Case 1: $k\in (-1,0)\cup(0,1]$.}
		We see from \eqref{geom-ineq-10} and \eqref{geom-ineq-20} that
		\begin{equation}\label{xeq-100}
			\begin{aligned}
				& \frac{(1+k)^{2}r_{0}}{\eta(r_{0})}
				\left\{
				2 \pi \frac{(1-k)^{2}}{(1+k)^{2}} - \frac{1}{4} \int_{\Sigma} H^{2}
				+ \int_{\Sigma} \left( \frac{H}{2}-\eta(r_{0})|\nabla u| \right) ^{2}
				\right\}
				\le 8\pi\left( \mathfrak{m}_{ABL}-m \right).
			\end{aligned}
		\end{equation}
		Substituting  $k$ given in \eqref{k} into \eqref{xeq-100}, we have
		\begin{equation*}
			\begin{aligned}
				&0\le \frac{(1+k)^{2}r_{0}}{\eta(r_{0})} \int_{\Sigma} \left( \frac{H}{2}-\eta(r_{0})|\nabla u| \right) ^{2}
				\le 8\pi\left( \mathfrak{m}_{ABL}-m \right).
			\end{aligned}
		\end{equation*}
		This gives \eqref{geom-ineq-4}.
		
		It follow from \eqref{geom-ineq-10} that
		\begin{equation*}
			2 \pi \geq\frac{(1+k)^{2}}{(1-k)^{2}} (\eta(r_{0}))^{2} \int_{\Sigma} |\nabla u|^2.
		\end{equation*}
		Then, by H\"older's inequality,
		\begin{equation*}
			\begin{aligned}
				2\pi \mathfrak{c}_{p}^{p-1}
				= \int_{\Sigma}|\nabla u|^{p-1}
				\le \left( \int_{\Sigma}|\nabla u|^{2} \right)^{\frac{p-1}{2}} |\Sigma|^{\frac{3-p}{2}}
				\le \left( \frac{2 \pi}{\mathfrak{c}_{p}^{-2}r_{0}^{2a}(1+k)^{4a}} \right)^{\frac{p-1}{2}} |\Sigma|^{\frac{3-p}{2}} .
			\end{aligned}
		\end{equation*}
		
		Thus
		$$
		\sqrt{\frac{|\Sigma|}{32\pi}}\geq\frac14(1+k)^{2}r_{0}=  \frac{(1+k)^2}{4|k|}(I_a(k)\mathfrak{c}_p)^{\frac1a}.
		$$
		The equality holds if and only if $M'$ is isometric to $(\mathcal{M}_{m, r_0}^{3,+},  g_{m})$.
		
		\noindent{\bf Case 2: $k=0$.} In this case, by using L'Hospital formula, let $k\rightarrow0$, we can see that $m\rightarrow0$,  $r_{0}\rightarrow2(\frac{\mathfrak{c}_p}{a})^{\frac1a}$ and $\frac{1-a\eta(r_{0})}{2k}\rightarrow\frac{a^{2}}{a+1}$.
		Also,   we may recover the procedure along this paper and we get
		$$
		\mathfrak{m}_{ABL}\ge0, \quad
		\sqrt{\frac{|\Sigma|}{32\pi}}\geq\frac{1}{2}(\frac{\mathfrak{c}_p}{a})^{\frac1a}.
		$$
		The equality holds if and only if $M'$ is isometric to $\mathbb{R}^3_+\setminus B_{+,r_{0}}^3$.
		
		\qed

\section{Proof of Theorem \ref{Thm:1.6}}\label{Sec:07}

In \cite{Agostiniani--Mantegazza--Mazzieri--Oronzio--2023},
Agostiniani--Mantegazza--Mazzieri--Oronzio gave a new proof of the
Penrose inequality for asymptotically flat manifolds with compact boundary.
In their argument, an isoperimetric inequality is used to obtain suitable
finite-perimeter representatives, while a Sobolev inequality is used to
derive the relevant estimates for the $p$-capacity. In the capillary setting,
where the ambient manifold has a noncompact boundary, one therefore needs
the corresponding Sobolev inequality on an asymptotically flat half-space.

We first recall the Euclidean input. By
\cite{Cordero--Nazaret--Villani--2004} and
\cite{Ciraolo--Figalli--Roncoroni--2020}, the sharp Sobolev inequality holds
on the half-space. In the sequel, we only need the following form.

\begin{lemma}[Sobolev inequality on the Euclidean half-space]
Let $1<p<n+1$ and set
\[
p^*:=\frac{(n+1)p}{n+1-p}.
\]
Then there exists a constant $C=C(n,p)>0$ such that for every
\[
u\in W^{1,p}(\mathbb{R}^{n+1}_+)
:=
\left\{
u\in L^{p^*}(\mathbb{R}^{n+1}_+):
\nabla u\in L^p(\mathbb{R}^{n+1}_+)
\right\},
\]
one has
\[
\left(
\int_{\mathbb{R}^{n+1}_+}|u|^{p^*}\,d\mu_\delta
\right)^{1/p^*}
\leq
C
\left(
\int_{\mathbb{R}^{n+1}_+}|\nabla u|^p\,d\mu_\delta
\right)^{1/p}.
\]
\end{lemma}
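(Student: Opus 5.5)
The plan is to derive the half-space inequality from the whole-space Sobolev inequality on $\mathbb{R}^{n+1}$ by even reflection across $\partial\mathbb{R}^{n+1}_+=\{x_{n+1}=0\}$. Given $u$ in the stated space, define
\[
\tilde u(x',x_{n+1}) := u(x',|x_{n+1}|), \qquad (x',x_{n+1})\in\mathbb{R}^n\times\mathbb{R}.
\]
We then apply the classical Gagliardo--Nirenberg--Sobolev inequality on $\mathbb{R}^{n+1}$. Recall that for $1<p<n+1$ it gives $\|\tilde u\|_{L^{p^*}(\mathbb{R}^{n+1})}\le S(n,p)\|\nabla\tilde u\|_{L^p(\mathbb{R}^{n+1})}$ for every $\tilde u\in L^{p^*}$ with $\nabla\tilde u\in L^p$. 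By symmetry both sides split into two equal halves, so
\[
2^{1/p^*}\|u\|_{L^{p^*}(\mathbb{R}^{n+1}_+)}\le S(n,p)\,2^{1/p}\|\nabla u\|_{L^p(\mathbb{R}^{n+1}_+)}.
\]
This yields the lemma with $C=2^{1/p-1/p^*}S(n,p)=2^{1/(n+1)}S(n,p)$. Alternatively one can cite the sharp constant of Cordero-Erausquin--Nazaret--Villani directly, which is obtained by mass transport on convex cones, the half-space being such a cone.

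The key steps, in order, are as follows.
\begin{enumerate}
\item[(i)] Check that $\tilde u\in W^{1,1}_{loc}(\mathbb{R}^{n+1})$ with $\nabla\tilde u(x',x_{n+1})=(\nabla'u,\operatorname{sgn}(x_{n+1})\partial_{n+1}u)(x',|x_{n+1}|)$ a.e. The standard argument tests against $\varphi\in C_c^\infty(\mathbb{R}^{n+1})$, splits the integral into the two half-spaces, and observes that the boundary terms on $\{x_{n+1}=0\}$ cancel by evenness. To make this rigorous one first approximates $u$ by functions smooth up to the boundary, for instance by translating slightly inward in the $x_{n+1}$-direction and mollifying; the trace terms then match exactly.
\item[(ii)] Note that $\|\tilde u\|_{p^*}^{p^*}=2\|u\|_{p^*}^{p^*}$ and $\|\nabla\tilde u\|_p^p=2\|\nabla u\|_p^p$, so $\tilde u$ lies in the whole-space class.
\item[(iii)] Apply the whole-space inequality. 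To avoid any issue with functions that tend to a constant at infinity, this should be done in its form for functions vanishing at infinity, meaning membership in $L^{p^*}$. That is exactly our hypothesis.
\end{enumerate}

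The main obstacle is step (i), namely justifying that even reflection introduces no singular part of the gradient on the hyperplane. I would handle it by density. Functions $C_c^\infty(\overline{\mathbb{R}^{n+1}_+})$ restricted from $C_c^\infty(\mathbb{R}^{n+1})$ are dense in the space $\{u\in L^{p^*}:\nabla u\in L^p\}$ on the half-space, which follows from cutoff at infinity using $u\in L^{p^*}$ and Hölder on the annuli, followed by mollification after an inward shift. One then proves the inequality for smooth compactly supported $u$, where the reflection is Lipschitz and clearly in $W^{1,p}$, and passes to the limit. In the limit Fatou's lemma controls the left side and strong $L^p$ convergence controls the right side. The cutoff estimate uses $\int_{R<|x|<2R}|u|^p R^{-p}\le \|u\|_{L^{p^*}(A_R)}^p\,|A_R|^{p/(n+1)}R^{-p}\lesssim \|u\|_{L^{p^*}(A_R)}^p\to0$, where $A_R$ denotes the annulus $\{R<|x|<2R\}$.
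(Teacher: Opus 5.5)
Your argument is correct, but it is not the route the paper takes. The paper gives no argument of its own. It simply invokes the sharp Sobolev inequality on convex cones, obtained by Cordero-Erausquin--Nazaret--Villani via mass transportation and revisited by Ciraolo--Figalli--Roncoroni, and specializes it to the half-space. You instead reduce to the whole-space Gagliardo--Nirenberg--Sobolev inequality by even reflection across $\{x_{n+1}=0\}$. This is self-contained and elementary, and it yields the explicit constant $C=2^{1/(n+1)}S(n,p)$. That constant is in fact already sharp: an Aubin--Talenti extremal centered at a point of the hyperplane is even in $x_{n+1}$, so its restriction to $\mathbb{R}^{n+1}_+$ attains equality. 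The cited mass-transport approach buys generality, since it works in any convex cone where no reflection symmetry is available, and it also characterizes the extremals. Neither is needed later in the paper, where only the existence of some $C(n,p)$ enters the transfer to asymptotically flat ends.

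A small streamlining of your write-up: once you have density of $C_c^\infty(\overline{\mathbb{R}^{n+1}_+})$ in $\{u\in L^{p^*}:\nabla u\in L^p\}$, you need the reflection only for smooth compactly supported functions, where it is Lipschitz. Your cutoff-plus-shifted-mollification argument gives exactly this density, with convergence of $u$ in $L^{p^*}$ and of $\nabla u$ in $L^p$. So the general version of step (i) and the appeal to Fatou's lemma can both be dropped.
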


We shall also use the corresponding exterior version.

\begin{lemma}[Sobolev inequality outside a half-ball]
Let $1<p<n+1$ and set
\[
p^*:=\frac{(n+1)p}{n+1-p}.
\]
For $R>0$, define
\[
\Omega_R:=\mathbb{R}^{n+1}_+\setminus \overline{B_R(0)}.
\]
Then there exists a constant $C=C(n,p)>0$, independent of $R$, such that
for every $u\in W^{1,p}(\Omega_R)$,
\[
\left(
\int_{\Omega_R}|u|^{p^*}\,d\mu_\delta
\right)^{1/p^*}
\leq
C
\left(
\int_{\Omega_R}|\nabla u|^p\,d\mu_\delta
\right)^{1/p}.
\]
\end{lemma}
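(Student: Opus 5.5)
The plan is to reduce the exterior inequality to the half-space Sobolev inequality of the previous lemma by an extension argument that is invariant under scaling. First I would treat $R=1$. For $u\in W^{1,p}(\Omega_1)$ I would build an extension $Eu$ to all of $\mathbb{R}^{n+1}_+$. The tool is the inversion (Kelvin-type reflection) across the sphere $\{|x|=1\}$, namely $\iota(x)=x/|x|^2$. It preserves $\mathbb{R}^{n+1}_+$, since it fixes the sign of $x_{n+1}$, and it maps the half-annulus $A:=\{1<|x|<2\}\cap\mathbb{R}^{n+1}_+$ onto $\{1/2<|x|<1\}\cap\mathbb{R}^{n+1}_+$. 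On $\{1/2<|x|<1\}$ I set $Eu=\chi\,(u\circ\iota)$, and on $\Omega_1$ I set $Eu=u$. Here $\chi$ is a cutoff with $\chi=1$ near $|x|=1$ and $\chi=0$ for $|x|\le 1/2+\epsilon$. The traces of $u$ and $u\circ\iota$ agree on $|x|=1$, so $Eu\in W^{1,p}_{loc}$. Because $\iota$ is a smooth diffeomorphism with bounded Jacobian on these annuli, we get
\[
\|\nabla(Eu)\|_{L^p(\mathbb{R}^{n+1}_+)}\le C\big(\|\nabla u\|_{L^p(\Omega_1)}+\|u\|_{L^p(A)}\big).
\]

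The main obstacle is the zeroth-order term $\|u\|_{L^p(A)}$, because the cutoff produces $|\nabla\chi|\,|u\circ\iota|$. The inequality has to be homogeneous, so this term must be absorbed. I see two ways to do it.

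\textbf{Preferred route: subtract a constant.} Let $c=\fint_A u$ and extend $u-c$ instead of $u$. Poincar\'e on the bounded Lipschitz domain $A$ gives $\|u-c\|_{L^p(A)}\le C\|\nabla u\|_{L^p(A)}$. The previous lemma then yields
\[
\|E(u-c)\|_{L^{p^*}}\le C\|\nabla u\|_{L^p(\Omega_1)}.
\]
It remains to show $c$ is controlled. Since $u\in L^{p^*}(\Omega_1)$ and the constant $c$ is not in $L^{p^*}$ of the infinite-volume set $\Omega_1$, I would argue as follows. Restrict to $\Omega_1$ and note $\|u-c\|_{L^{p^*}(\Omega_1)}\le C\|\nabla u\|_p$. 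Then $|c|\,|\Omega_1\cap B_\rho|^{1/p^*}\le \|u\|_{L^{p^*}(\Omega_1\cap B_\rho)}+C\|\nabla u\|_p$ for every $\rho$. Dividing by $|\Omega_1\cap B_\rho|^{1/p^*}\to\infty$ forces $c=0$. Hence $\|u\|_{L^{p^*}(\Omega_1)}\le C\|\nabla u\|_{L^p(\Omega_1)}$.

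\textbf{Alternative route.} One could instead use a Hardy-type bound on $A$ in terms of $\nabla u$ and the decay at infinity. The constant-subtraction argument seems cleaner.

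Finally, I would remove the restriction $R=1$ by scaling. Set $u_R(x)=u(Rx)$. Both sides scale by the same factor $R^{-(n+1)/p^*}=R^{1-(n+1)/p}$, because $1/p^*=1/p-1/(n+1)$. The constant is therefore independent of $R$. The definition of $W^{1,p}(\Omega_R)$, requiring $u\in L^{p^*}$, is exactly what justifies the step $c=0$.
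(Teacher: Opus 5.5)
Your architecture (inversion across $\{|x|=1\}$, the half-space inequality, then rescaling) is exactly the ``standard extension argument'' plus scaling that the paper invokes in one line. Your extension and scaling computations are correct. The gap is in how you absorb the zeroth-order term $\|u\|_{L^p(A)}$.

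If $c=|A|^{-1}\int_A u\neq 0$, then $E(u-c)=u-c$ on the infinite-measure set $\Omega_1$. Since $u\in L^{p^*}(\Omega_1)$, this function is not in $L^{p^*}(\mathbb{R}^{n+1}_+)$. The half-space lemma is stated only for functions in $L^{p^*}$ with gradient in $L^p$, so it cannot be applied to $E(u-c)$. Its conclusion would in fact be false there, since the left-hand side is $+\infty$. Your deduction that $c=0$ rests on exactly this unjustified inequality, so it is circular, and the conclusion is false: any $u\in C^\infty_c(\overline{\Omega_1})$ with $u\equiv 1$ on $A$ has $c=1$. The hypothesis $u\in L^{p^*}$ pins down the constant at infinity of the extended function, not the mean of $u$ over $A$.

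Any of the following repairs it.

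\textbf{Homogeneous Sobolev inequality (closest to your idea).} If $\nabla v\in L^p(\mathbb{R}^{n+1}_+)$ with $p<n+1$, there is a constant $v_\infty$ with $\|v-v_\infty\|_{L^{p^*}}\le C\|\nabla v\|_{L^p}$; reduce to $\mathbb{R}^{n+1}$ by even reflection in $x_{n+1}$. For $v=E(u-c)$, Poincar\'e on $A$ gives $\|\nabla v\|_{L^p}\le C\|\nabla u\|_{L^p(\Omega_1)}$. Both $v-v_\infty=u-(c+v_\infty)$ and $u$ lie in $L^{p^*}(\Omega_1)$, and $|\Omega_1|=\infty$, so $c+v_\infty=0$. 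Hence $v-v_\infty=u$ on $\Omega_1$ and the estimate follows. This is the correct form of your infinite-volume argument.

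\textbf{Bound $c$ directly.} Let $A_k=\{2^k<|x|<2^{k+1}\}\cap\mathbb{R}^{n+1}_+$ and let $c_k$ be the mean of $u$ on $A_k$. The rescaled Poincar\'e inequality on $A_k\cup A_{k+1}$ gives $|c_k-c_{k+1}|\le C\,2^{k(1-\frac{n+1}{p})}\|\nabla u\|_{L^p(\Omega_1)}$. H\"older gives $|c_k|\le |A_k|^{-1/p^*}\|u\|_{L^{p^*}(\Omega_1)}\to 0$. Summing the geometric series, which converges because $p<n+1$, yields $|c|\le C\|\nabla u\|_{L^p(\Omega_1)}$. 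Hence $\|u\|_{L^p(A)}\le C\|\nabla u\|_{L^p(\Omega_1)}$, and you can extend $u$ itself, which does lie in the domain of the half-space lemma.

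\textbf{Your alternative Hardy route.} Work with compactly supported $u$; these are dense, by a cutoff argument that uses $u\in L^{p^*}$. Integrate $\partial_s u(s\theta)$ from $r$ to $\infty$ and apply H\"older against the weight $s^{n}$. Since $p<n+1$, $s^{-n/(p-1)}$ is integrable at infinity, so this bounds $\|u\|_{L^p(A)}$ by $\|\nabla u\|_{L^p(\Omega_1)}$.
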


\begin{proof}
This follows from the Sobolev inequality on $\mathbb{R}^{n+1}_+$ together
with a standard extension argument for the Lipschitz domain $\Omega_R$.
The constant is independent of $R$ by scaling.
\end{proof}

We now transfer the inequality to an asymptotically flat half-space end.

\begin{lemma}[Sobolev inequality on an asymptotically flat half-space end]
Let $(M^{n+1},g)$ be an asymptotically flat half-space. Suppose that outside
a compact set $K\subset M$ there is a coordinate chart identifying
$M\setminus K$ with
\[
\mathbb{R}^{n+1}_+\setminus \overline{B_R(0)}
\]
such that
\[
g_{ij}-\delta_{ij}=O_2(r^{-\tau})
\]
for some $\tau>0$. Let $1<p<n+1$ and set
\[
p^*:=\frac{(n+1)p}{n+1-p}.
\]
Then, after enlarging $R$ if necessary, there exists a constant
$C=C(n,p,g)>0$ such that for every $u\in C_c^\infty(\overline{M\setminus K})$,
\[
\left(
\int_{M\setminus K}|u|^{p^*}\,d\mu_g
\right)^{1/p^*}
\leq
C
\left(
\int_{M\setminus K}|\nabla u|_g^p\,d\mu_g
\right)^{1/p}.
\]
Consequently, by density, the same inequality holds for every
$u\in W^{1,p}(M\setminus K,g)$.
\end{lemma}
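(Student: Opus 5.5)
Since $g$ is uniformly equivalent to the Euclidean metric $\delta$ far out, the plan is to transfer the Euclidean exterior Sobolev inequality (the preceding lemma on $\Omega_R$) to $(M\setminus K,g)$ by comparing the volume forms and gradient norms.

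First, using $g_{ij}-\delta_{ij}=O_2(r^{-\tau})$, enlarge $R$ so that on $M\setminus K\cong\Omega_R$ we have $\sup|g_{ij}-\delta_{ij}|\le \varepsilon_0$ for a small fixed $\varepsilon_0=\varepsilon_0(n)$, e.g. $\varepsilon_0=\frac{1}{4(n+1)}$. Then, as symmetric matrices,
\[
\tfrac12\,\delta\le g\le 2\,\delta .
\]
From this we get the two pointwise comparisons we need. For volumes, $d\mu_g=\sqrt{\det g}\,d\mu_\delta$, and $\det g$ is bounded above and below by positive constants depending only on $n$, so $c_1 d\mu_\delta\le d\mu_g\le c_2 d\mu_\delta$. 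For gradients, $|\nabla u|_g^2=g^{ij}\partial_iu\,\partial_ju$, and $g^{-1}\ge \tfrac12\delta$ gives $|\nabla u|_\delta^2\le 2|\nabla u|_g^2$.

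Next, take $u\in C_c^\infty(\overline{M\setminus K})$. It may be nonzero on both the flat boundary piece $\{x_{n+1}=0\}$ and the inner sphere $\partial B_R\cap\mathbb{R}^{n+1}_+$, which is allowed. Its pullback lies in $W^{1,p}(\Omega_R)$, since it has compact support and hence lies in $L^{p^*}$ with gradient in $L^p$. The Sobolev inequality outside a half-ball then gives
\[
\Big(\int_{\Omega_R}|u|^{p^*}d\mu_\delta\Big)^{1/p^*}\le C(n,p)\Big(\int_{\Omega_R}|\nabla u|_\delta^p d\mu_\delta\Big)^{1/p}.
\]
Applying the volume comparison on the left and both comparisons on the right yields the claim with
\[
C=C(n,p)\,c_2^{1/p^*}\,c_1^{-1/p}\,2^{1/2}.
\]

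The density statement follows from approximating $u\in W^{1,p}(M\setminus K,g)$ in the norm $\|u\|_{L^{p^*}}+\|\nabla u\|_{L^p}$ by $C_c^\infty(\overline{M\setminus K})$ functions. To do this, first truncate with cutoffs $\chi(r/L)$; the error term $\|u\nabla\chi_L\|_{L^p}$ is controlled by Hölder on the annulus $\{L<r<2L\}$ using $u\in L^{p^*}$ and $|\nabla\chi_L|\lesssim L^{-1}$. Then mollify up to the Lipschitz boundary. Because both sides of the inequality are continuous in this norm, it passes to the limit.

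The main obstacle is not the metric comparison, which is routine, but making sure the exterior Euclidean lemma really applies to functions that do not vanish on the inner sphere $\partial B_R$ with an $R$-independent constant. I take this as given from the previous lemma. Since the constant there is scale invariant, enlarging $R$ does not harm it. If one wanted to avoid relying on it, an alternative is to extend $u$ across $\partial B_R$ by reflection in the sphere (Kelvin-type inversion) onto $B_R\cap\mathbb{R}^{n+1}_+$. This costs a bounded factor in the gradient norm, after which the half-space Sobolev inequality can be applied directly.
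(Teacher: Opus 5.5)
Your proposal is correct and follows the paper's argument. You enlarge $R$ so that $g$ and $\delta$ are uniformly comparable, which makes $d\mu_g$ comparable to $d\mu_\delta$ and $|\nabla u|_g$ comparable to $|\nabla u|_\delta$, and then transfer the scale-invariant Euclidean Sobolev inequality on $\Omega_R$. Your explicit constant and your cutoff-plus-mollification density argument are sound, as is the inversion extension $u(R^2x/|x|^2)$, whose $L^p$ gradient norm on $B_R\cap\mathbb{R}^{n+1}_+$ is bounded by that of $u$ on $\Omega_R$ since $n+1-p>0$; these fill in details the paper leaves implicit.
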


\begin{proof}
Since $g_{ij}-\delta_{ij}=O_2(r^{-\tau})$, for every sufficiently small
$\epsilon>0$, after enlarging $R$ if necessary, we have on the end
\[
(1-\epsilon)\delta \leq g \leq (1+\epsilon)\delta
\]
as bilinear forms. Moreover, the corresponding volume forms are uniformly
comparable:
\[
(1-\epsilon)d\mu_\delta
\leq
d\mu_g
\leq
(1+\epsilon)d\mu_\delta,
\]
after possibly replacing $\epsilon$ by a comparable small constant.

Therefore,
\[
\left(
\int_{M\setminus K}|u|^{p^*}\,d\mu_g
\right)^{1/p^*}
\leq
(1+\epsilon)^{1/p^*}
\left(
\int_{\Omega_R}|u|^{p^*}\,d\mu_\delta
\right)^{1/p^*}.
\]
By the Euclidean Sobolev inequality on $\Omega_R$,
\[
\left(
\int_{\Omega_R}|u|^{p^*}\,d\mu_\delta
\right)^{1/p^*}
\leq
C(n,p)
\left(
\int_{\Omega_R}|\nabla u|_\delta^p\,d\mu_\delta
\right)^{1/p}.
\]
Using again the uniform comparability between $g$ and $\delta$, we get
\[
\left(
\int_{\Omega_R}|\nabla u|_\delta^p\,d\mu_\delta
\right)^{1/p}
\leq
C(\epsilon)
\left(
\int_{M\setminus K}|\nabla u|_g^p\,d\mu_g
\right)^{1/p}.
\]
Combining the preceding estimates gives the desired inequality.
\end{proof}

Finally, we need the following version of the gluing result of
Pigola--Setti--Troyanov. The original statement in
\cite{Pigola--Setti--Troyanov--2014, Theorem 3.2} is formulated for
possibly incomplete Riemannian manifolds without boundary. The same proof
applies verbatim to manifolds with boundary, provided one works with
smooth functions up to the boundary and with the corresponding
free-boundary Sobolev spaces.

\begin{lemma}[$L^{q,p}$-Sobolev inequality outside a compact set]
\label{lem:SobolevInequality}
Let $(M,g)$ be a possibly incomplete Riemannian manifold with boundary and
infinite volume. Suppose that there exists a compact subset $K\subset M$
such that $M\setminus K$ supports the Sobolev inequality
\[
\left(
\int_{M\setminus K}|\varphi|^q\,d\mu_g
\right)^{1/q}
\leq
C_K
\left(
\int_{M\setminus K}|\nabla \varphi|_g^p\,d\mu_g
\right)^{1/p}
\]
for every
\[
\varphi\in C_c^\infty(\overline{M\setminus K}).
\]
Then there exists a constant $C_M>0$ such that
\[
\left(
\int_M|\varphi|^q\,d\mu_g
\right)^{1/q}
\leq
C_M
\left(
\int_M|\nabla \varphi|_g^p\,d\mu_g
\right)^{1/p}
\]
for every
\[
\varphi\in C_c^\infty(\overline M).
\]
\end{lemma}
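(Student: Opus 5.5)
We follow the strategy of Pigola--Setti--Troyanov: glue the exterior inequality on $M\setminus K$ to a local estimate near $K$ using a cutoff. Fix a slightly larger compact set $K'\supset K$ with smooth (relative) boundary, chosen so that $K'\setminus K$ has nonempty interior. Choose $\psi\in C^\infty(M)$ with $0\le\psi\le1$, $\psi\equiv1$ near $K$, and $\operatorname{supp}\psi\subset K'$. Given $\varphi\in C_c^\infty(\overline M)$, write $\varphi=\psi\varphi+(1-\psi)\varphi$. The second piece lies in $C_c^\infty(\overline{M\setminus K})$, so the hypothesis gives
\[
\|(1-\psi)\varphi\|_{L^q(M\setminus K)}\le C_K\bigl(\|\nabla\varphi\|_{L^p(M)}+\|\nabla\psi\|_\infty\|\varphi\|_{L^p(K'\setminus K)}\bigr).
\]
The first piece is supported in the compact manifold-with-corners $K'$. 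We bound it by the local Sobolev inequality on $K'$ (extension across the boundary is standard, since $\partial M$ is smooth and we impose no condition on it), with target exponent $q\le p^*$:
\[
\|\psi\varphi\|_{L^q}\le C\bigl(\|\nabla\varphi\|_{L^p(K')}+\|\varphi\|_{L^p(K')}\bigr).
\]
It therefore suffices to control $\|\varphi\|_{L^p(K')}$ by $\|\nabla\varphi\|_{L^p(M)}$. This is the only genuinely global step.

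For this we argue by contradiction, following the original proof, which goes through verbatim with boundary. Suppose there are $\varphi_j\in C_c^\infty(\overline M)$ with $\|\varphi_j\|_{L^p(K')}=1$ and $\|\nabla\varphi_j\|_{L^p(M)}\to0$. Take a connected compact domain $D\supset K'$ meeting $M\setminus K$ in a set of positive measure. By Rellich compactness on $D$ (again valid up to the smooth boundary piece $\partial M\cap D$), a subsequence converges in $L^p(D)$ to a function with zero gradient. Since $D$ is connected, the limit is a constant $c$, and $|c|\,|K'|^{1/p}=1$ forces $c\neq0$.

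On the other hand, apply the exterior inequality to $(1-\psi)\varphi_j$. Its right-hand side is bounded, because $\|\varphi_j\|_{L^p(K'\setminus K)}$ stays bounded. Moreover $\|\nabla((1-\psi)\varphi_j)\|_{L^p}\to \|c\,\nabla\psi\|_{L^p}$, which is finite. This alone gives no contradiction, so we refine the choice of cutoff. We use cutoffs $\psi_R$ supported in growing compact sets, and use the infinite volume of $M$. The exterior inequality shows that $(1-\psi_R)\varphi_j$ is bounded in $L^q$ uniformly in $j$. The $L^p$ gradient term is $|c|\,\|\nabla\psi_R\|_{L^p}$. Each fixed $\psi_R$ gives a bounded constant, while the $L^q$ norm of the limit on $\{\psi_R=0\}\cap D_R$ is at least $|c|\,\mathrm{vol}(\text{annulus})^{1/q}$. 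Since $M$ has infinite volume and the ends are connected to $K'$, we take $D_R$ exhausting $M$ (each $D_R$ connected). Rellich on each $D_R$ shows $\varphi_j\to c$ in $L^p_{loc}$ on all of $M$.

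To conclude, apply the exterior inequality to $(1-\psi)\varphi_j$ with one fixed $\psi$. Fatou gives $\|c(1-\psi)\|_{L^q(M)}\le C_K\,|c|\,\|\nabla\psi\|_{L^p}<\infty$. But $\int_M|1-\psi|^q=\infty$ by infinite volume, so $c=0$, a contradiction.

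The main obstacle is this last compactness step: making sure Rellich and connectedness work up to the noncompact boundary $\partial M$, since $M$ may be only incomplete and the domains have corners where $\partial D$ meets $\partial M$. We handle it by choosing the $D_R$ as smooth relatively compact connected domains whose interior boundary meets $\partial M$ transversally, so that they are Lipschitz and the $W^{1,p}\hookrightarrow L^p$ compactness holds. Putting the three estimates together yields $\|\varphi\|_{L^q(M)}\le C_M\|\nabla\varphi\|_{L^p(M)}$.
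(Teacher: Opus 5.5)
Your proposal is correct and has the same architecture as the paper. You split $\varphi=\psi\varphi+(1-\psi)\varphi$, use the local Sobolev inequality on the compact piece and the exterior inequality on the other, and reduce everything to the Poincaré-type bound $\|\varphi\|_{L^p(K')}\le C\|\nabla\varphi\|_{L^p(M)}$.

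The difference lies in how that bound is obtained.
\begin{itemize}
\item The paper asserts, by appeal to Pigola--Setti--Troyanov, that infinite volume plus the exterior inequality make $M$ $p$-hyperbolic. It then invokes the Poincaré inequality that characterizes $p$-hyperbolicity.
\item You prove the bound directly by contradiction. Rellich on an exhaustion by connected Lipschitz domains gives $\varphi_j\to c\neq0$ in $L^p_{\mathrm{loc}}(M)$. Fatou applied to $(1-\psi)\varphi_j$ then gives $|c|\,\|1-\psi\|_{L^q(M)}\le C_K|c|\,\|\nabla\psi\|_{L^p}$, which is impossible when $\operatorname{vol}(M)=\infty$.
\end{itemize}
This is essentially the $p$-hyperbolicity argument made explicit. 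What it buys is a self-contained proof in which the boundary issues are actually addressed rather than declared to go through ``verbatim'': compactness up to $\partial M$, and the corners where $\partial D_R$ meets $\partial M$. It also makes visible that $M$ must be connected. Both you and the paper use this, but the statement omits it, and it is needed: a compact component contained in $K$ would violate the conclusion.

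Two clean-ups are needed.
\begin{itemize}
\item Before applying Rellich on $D$ you need a $W^{1,p}(D)$ bound. This comes from the Poincaré inequality on the connected domain $D$, with the mean taken over $K'$.
\item The detour through the cutoffs $\psi_R$ and annuli is unnecessary. Once $\varphi_j\to c$ in $L^p_{\mathrm{loc}}(M)$, your final paragraph with a single fixed $\psi$ already closes the argument.
\end{itemize}
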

\begin{proof}
The argument is the same as in
\cite{Pigola--Setti--Troyanov--2014, Theorem 3.2}. We recall the proof for
completeness. Choose a precompact domain $\Omega\subset M$ with smooth
boundary, allowing boundary along $\partial M$, such that $K\subset\Omega$.
Let $\Omega_\varepsilon$ be a collar enlargement of $\Omega$ and set
\[
M_\varepsilon:=M\setminus \Omega_\varepsilon .
\]
By assumption, the $L^{q,p}$-Sobolev inequality holds on $M_\varepsilon$.
On the compact manifold with boundary $\Omega_\varepsilon$, the standard
Sobolev inequality holds for functions smooth up to the boundary:
\[
\|w\|_{L^q(\Omega_\varepsilon)}
\leq
C_\varepsilon
\left(
\|\nabla w\|_{L^p(\Omega_\varepsilon)}
+
\|w\|_{L^p(\Omega_\varepsilon)}
\right).
\]
Choose a smooth cut-off function $\rho\in C_c^\infty(\overline M)$ such that
$0\leq \rho\leq 1$, $\rho=1$ on a smaller neighborhood of $\Omega$, and
$\rho=0$ on $M_\varepsilon$. For any
$v\in C_c^\infty(\overline M)$, write
\[
v=\rho v+(1-\rho)v.
\]
Then $\rho v$ is supported in the compact region $\Omega_\varepsilon$, while
$(1-\rho)v$ is supported in the exterior region $M_\varepsilon$. Applying the
compact Sobolev inequality to $\rho v$ and the exterior Sobolev inequality
to $(1-\rho)v$, and using the bounds on $\nabla\rho$, gives
\[
\|v\|_{L^q(M)}
\leq
C
\left(
\|\nabla v\|_{L^p(M)}
+
\|v\|_{L^p(\Omega_\varepsilon)}
\right).
\]
Since $M$ has infinite volume and the exterior Sobolev inequality holds,
the same argument as in
\cite{Pigola--Setti--Troyanov--2014, Theorem 3.2} implies that $M$ is
$p$-hyperbolic. Hence the local $L^p$ term can be absorbed by the
$p$-hyperbolic Poincaré-type inequality on $\Omega_\varepsilon$:
\[
\|v\|_{L^p(\Omega_\varepsilon)}
\leq
C
\|\nabla v\|_{L^p(M)}.
\]
Therefore
\[
\|v\|_{L^q(M)}
\leq
C_M
\|\nabla v\|_{L^p(M)}
\]
for every $v\in C_c^\infty(\overline M)$.
\end{proof}

Combining the previous lemmas, we obtain the global Sobolev inequality on
an asymptotically flat half-space.

\begin{corollary}[Global Sobolev inequality on an asymptotically flat half-space]
\label{cor:SobolevInequality}
Let $(M^{n+1},g)$ be an asymptotically flat half-space with one end.
Let $1<p<n+1$ and set
\[
p^*:=\frac{(n+1)p}{n+1-p}.
\]
Then there exists a constant $C=C(M,g,p)>0$ such that for every
$\varphi\in C_c^\infty(\overline{M})$,
\[
\left(
\int_M|\varphi|^{p^*}\,d\mu_g
\right)^{1/p^*}
\leq
C
\left(
\int_M|\nabla \varphi|_g^p\,d\mu_g
\right)^{1/p}.
\]
In particular,
\[
\|\varphi\|_{L^{p^*}(M,g)}
\leq
C
\|\nabla \varphi\|_{L^p(M,g)}.
\]
Moreover, in dimension three, the constants may be chosen uniformly bounded
for $p\in(1,p_0]$, where $p_0>1$ is fixed sufficiently close to $1$.
\end{corollary}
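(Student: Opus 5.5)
The plan is to derive the corollary from the three preceding lemmas. The end Sobolev lemma supplies the exterior inequality, and Lemma~\ref{lem:SobolevInequality} glues it to the compact core.

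First, I fix the one-end chart. By definition of a one-end asymptotically flat half-space, there is a compact $K\subset M$ with $M\setminus K$ identified with $\mathbb{R}^{n+1}_+\setminus\overline{B_R(0)}$ and $g_{ij}-\delta_{ij}=O_2(r^{-\tau})$. I enlarge $R$ so that $(1-\epsilon)\delta\le g\le(1+\epsilon)\delta$ with, say, $\epsilon=\tfrac12$; this also enlarges $K$. The Sobolev inequality on an asymptotically flat half-space end then gives, for every $\varphi\in C_c^\infty(\overline{M\setminus K})$,
\[
\|\varphi\|_{L^{p^*}(M\setminus K)}\le C_K\|\nabla\varphi\|_{L^p(M\setminus K)},
\]
with $C_K$ depending only on $n$, $p$ and the comparability constant. $M$ has infinite volume, since the volume of $\{r\le\rho\}$ on the end is comparable to $\rho^{n+1}$. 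So all hypotheses of Lemma~\ref{lem:SobolevInequality} hold with $q=p^*$, and that lemma yields the global inequality for $\varphi\in C_c^\infty(\overline M)$. This proves the main assertion.

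The real work is the uniformity claim in dimension three for $p\in(1,p_0]$. I would trace the constant through each step.

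\textbf{Euclidean constant.} The sharp half-space Sobolev constant of Cordero-Erausquin--Nazaret--Villani is explicit and continuous in $p$ up to $p=1$, where it tends to the half-space isoperimetric constant. The extension to $\Omega_R$ is by reflection across $\partial B_R$ (inversion), which is uniformly bi-Lipschitz and scale invariant. So $C(3,p)$ is bounded on $(1,p_0]$.

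\textbf{Comparison with $\delta$.} This introduces factors $(1\pm\epsilon)^{1/p}$ and $(1\pm\epsilon)^{1/p^*}$, which are bounded for $p\ge1$.

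\textbf{Gluing step.} This step is the main obstacle. The compact Sobolev constant $C_\varepsilon$ on $\Omega_\varepsilon$ comes from a fixed finite atlas and the Euclidean estimate. It is therefore uniform for $p$ near $1$, since $L^p$ norms on a finite-measure set are comparable uniformly in $p\in[1,2]$ by Hölder. The absorption constant in $\|v\|_{L^p(\Omega_\varepsilon)}\le C\|\nabla v\|_{L^p(M)}$ comes from $p$-hyperbolicity, and this is where uniformity could fail. Rather than use the abstract argument, I would prove it directly. Take a cutoff annulus $A\subset M\setminus K$ and set $\bar v_A=\frac1{|A|}\int_A v$. On a connected compact set $D\supset\Omega_\varepsilon\cup A$, a Poincaré inequality with constant uniform for $p\in[1,2]$ gives
\[
\|v-\bar v_A\|_{L^p(D)}\le C\|\nabla v\|_{L^p(D)}.
\]
The average $|\bar v_A|$ is controlled by $|A|^{-1/p^*}\|v\|_{L^{p^*}(A)}$. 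That in turn is controlled by the exterior inequality applied to $\chi v$, where $\chi$ is a fixed cutoff vanishing near $K$. The commutator term $\|v\nabla\chi\|_{L^p}$ is again bounded by the Poincaré estimate on $D$. Every constant in this chain depends continuously on $p$ at $p=1$. This gives uniform boundedness of $C$ for $p\in(1,p_0]$, which is exactly what the $p\to1$ limit in the proof of Theorem~\ref{Thm:1.6} needs.
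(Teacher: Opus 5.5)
Your derivation of the main inequality follows the paper's route. The paper's entire proof is the sentence ``combining the previous lemmas'' (the end lemma plus the Pigola--Setti--Troyanov gluing lemma). It gives no argument at all for uniformity in $p\in(1,p_0]$, so that part of your proposal goes beyond the paper. You correctly identify the only untracked constant, namely the absorption constant coming from $p$-hyperbolicity.

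\textbf{Gap in the gluing step.} The direct argument you substitute for $p$-hyperbolicity is circular as written.
\begin{itemize}
\item Applying the exterior inequality to $\chi v$ gives $\|v\|_{L^{p^*}(A)}\le C_K\bigl(\|\nabla v\|_{L^p}+\|\nabla\chi\|_\infty\|v\|_{L^p(S)}\bigr)$, where $S=\operatorname{supp}\nabla\chi$.
\item The Poincar\'e inequality on $D$ controls only the oscillation. So the best you get is $\|v\|_{L^p(S)}\le C_P\|\nabla v\|_{L^p}+|\bar v_A|\,|S|^{1/p}$.
\item Hence $|\bar v_A|$ reappears on the right-hand side with coefficient $\kappa=C_K|A|^{-1/p^*}\|\nabla\chi\|_\infty|S|^{1/p}$.
\end{itemize}
Since $\frac{n+1}{p}-\frac{n+1}{p^*}=1$, $\kappa$ is scale invariant. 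If $\chi$ switches on across $\{\rho<r<2\rho\}$ and $A$ has size comparable to $\rho$, then $\kappa$ is $C_K$ times a dimensional constant. Nothing makes it smaller than $1$, so the absorption does not close.

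\textbf{Two repairs.}
\begin{itemize}
\item Take $A=\{2\rho<r<L\rho\}$ with $L$ large. Then $\kappa\lesssim C_KL^{-(n+1)/p^*}\le\frac12$ uniformly for $p\in(1,p_0]$, and you choose $D\supset A\cup\Omega_\varepsilon$.
\item More simply, drop $\chi$. The end inequality, as stated in the paper and as your own inversion argument proves it, holds for $\varphi\in C_c^\infty(\overline{M\setminus K})$ with no vanishing required on the inner boundary. Hence $|\bar v_A|\le|A|^{-1/p^*}C_K\|\nabla v\|_{L^p(M)}$ directly. With $D\supset K\cup A$ compact and connected,
\[
\|v\|_{L^{p^*}(M)}\le\|v\|_{L^{p^*}(M\setminus K)}+\|v-\bar v_A\|_{L^{p^*}(D)}+|\bar v_A|\,|D|^{1/p^*}.
\]
Combined with a Sobolev--Poincar\'e inequality on $D$, this gives the global inequality with uniform constants. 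It bypasses the gluing lemma and $p$-hyperbolicity altogether.
\end{itemize}

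\textbf{Two smaller points.}
\begin{itemize}
\item The inversion $y\mapsto R^2y/|y|^2$ is not bi-Lipschitz on $\Omega_R$, because its inverse degenerates at the origin. The extension still works, with constant independent of $R$ and $p$, because of the conformal change of variables. It produces the weight $(R/|y|)^{2(n+1-p)}\le1$, since $p<n+1$.
\item H\"older alone cannot raise the integrability exponent to $p^*$, so it does not give uniformity of the compact Sobolev constant. Instead, apply the $p=1$ inequality to $|w|^\gamma$ with $\gamma=\frac{pn}{n+1-p}\to1$. This yields the constant $\gamma C_1$.
\end{itemize}
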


We now apply this Sobolev inequality to the $p$-capacity of the compact
free-boundary component $\Sigma$.

\begin{theorem}[Limit of the $p$-capacity as $p\to 1$]
\label{thm:capacity-limit}
Let $(M',g)$ be a $3$-dimensional, complete, one-ended asymptotically flat
half-space with boundary
\[
\partial M'=\hat{\partial} M' \cup \Sigma,
\]
where $\Sigma$ is a compact connected free-boundary minimal surface. Assume
that $M'$ has nonnegative scalar curvature, that
\[
H_{\hat{\partial} M' }\geq 0,
\]
that $M'$ is simply connected, and that $M'$ contains no other closed or
free-boundary minimal surfaces. For $1<p<3$, let
\[
\operatorname{Cap}_p(\Sigma,M')
:=
\inf
\left\{
\int_{M'}|\nabla \varphi|_g^p\,d\mu_g:
\varphi\in C_c^\infty(\overline{M'}),\ 
\varphi\geq 1 \ \text{in a neighborhood of } \Sigma
\right\}.
\]
Then
\[
\lim_{p\to 1^+}\operatorname{Cap}_p(\Sigma,M')
=
|\Sigma|_g .
\]
Equivalently, if $u_p$ is the weak solution of the $p$-capacitary problem
\eqref{p-H}, then
\[
\lim_{p\to 1^+}
\int_{M'}|\nabla u_p|_g^p\,d\mu_g
=
|\Sigma|_g .
\]
\end{theorem}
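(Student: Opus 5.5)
The plan is to prove the two inequalities $\limsup_{p\to1^+}\operatorname{Cap}_p(\Sigma,M')\le|\Sigma|_g$ and $\liminf_{p\to1^+}\operatorname{Cap}_p(\Sigma,M')\ge|\Sigma|_g$ separately, following the scheme of Agostiniani--Mantegazza--Mazzieri--Oronzio \cite{Agostiniani--Mantegazza--Mazzieri--Oronzio--2023}. The compact-boundary tools are replaced by Corollary~\ref{cor:SobolevInequality} and by the relative (free-boundary) perimeter in $M'$: boundary portions lying on $\hat\partial M'$ are not counted.

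\textbf{Upper bound.} Take a test function that is $1$ near $\Sigma$ and decays linearly in the normal distance $s$ to $\Sigma$ on a thin collar $\{0\le s\le\delta\}$. Because $\Sigma$ meets $\hat\partial M'$ orthogonally, the Fermi collar can be chosen to be tangent to $\hat\partial M'$ along its sides, so the test function is admissible. Then
\[
\int|\nabla\varphi|^p=\delta^{-p}\,\mathrm{Vol}(\text{collar})=\delta^{1-p}\,|\Sigma|\,(1+O(\delta)).
\]
Letting $p\to1$ first and then $\delta\to0$ gives $\limsup_{p\to1}\operatorname{Cap}_p\le|\Sigma|$. Smoothing the test function into $C_c^\infty(\overline{M'})$ is routine.

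\textbf{Lower bound.} Let $u_p$ be the solution of \eqref{p-H}.
\begin{itemize}
\item[(1)] Use Corollary~\ref{cor:SobolevInequality}, applied to $1-u_p$ (or a truncation of it), together with its $p$-uniform constants near $1$. This gives a uniform bound on $\|1-u_p\|_{L^{p^*}}$ in terms of $\operatorname{Cap}_p^{1/p}$, which is bounded by the upper bound.
\item[(2)] Apply the coarea formula: $\operatorname{Cap}_p=\int_0^1\int_{\{u_p=t\}}|\nabla u_p|^{p-1}\,dt$. By H\"older and Young, in the form $\int|\nabla u|^p\ge p\,\lambda^{p-1}\int|\nabla u|-(p-1)\lambda^p\,\mathrm{Vol}$ on bounded regions, one gets $\operatorname{Cap}_p^{1/p}\ge\int_0^1 P(\{u_p<t\})^{1/p}\cdot(\cdots)\,dt$ up to errors that vanish as $p\to1$.
\item[(3)] Show that the sublevel sets $\{u_p<t\}$ converge, as $p\to1$ (up to subsequences, in $L^1_{loc}$ via BV compactness), to a finite-perimeter set $E\supset\Sigma$. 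The Sobolev bound in (1) keeps the volume of $E$ finite, so $E$ is bounded.
\item[(4)] Then $\liminf\operatorname{Cap}_p\ge\inf\{P(E;M'):E\supset\Sigma \text{ bounded}\}$, the relative perimeter. To conclude, show that $\Sigma$ is outer-minimizing for the free-boundary perimeter.
\end{itemize}

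\textbf{Outer-minimizing step.} Minimize $P(E;M')$ among bounded $E\supset\Sigma$, with an isoperimetric or barrier argument at infinity ensuring the minimizer stays bounded. Free-boundary regularity in dimension three makes the reduced boundary of the minimizer a smooth stable free-boundary minimal surface off $\Sigma$, and, by the maximum principle, touching $\Sigma$ only where it coincides with $\Sigma$. Since $M'$ contains no other closed or free-boundary minimal surfaces, the minimizer's boundary is $\Sigma$ itself, hence $P(E)\ge|\Sigma|$.

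I expect the main obstacle to be this outer-minimizing step, together with the compactness in (3). Free-boundary regularity and the contact with $\hat\partial M'$ need care; I would lean on Koerber's free-boundary analysis \cite{Koerber--2023}. The uniform-in-$p$ Sobolev constant is what rules out mass escaping to infinity.
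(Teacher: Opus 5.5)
Your upper bound is the paper's (collar cutoff, $p\to1^+$ first, then shrink the collar). Your outer-minimizing step is the same perimeter-minimization argument combined with the no-other-minimal-surfaces hypothesis; you handle boundedness of the minimizer and contact with $\Sigma$ more carefully than the paper does.

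The lower bound is where you genuinely differ, and your route is correct once step (3) is removed.

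The paper passes through $\operatorname{Cap}_1(\Sigma,M')$. It first shows $\operatorname{Cap}_1\ge|\Sigma|_g$ by coarea on arbitrary competitors plus outer-minimality. It then shows $\operatorname{Cap}_1\le q\,C_S(p)^{q-1}\operatorname{Cap}_p^{q/p}$ with $q=2p/(3-p)$. This comes from testing $\operatorname{Cap}_1$ with the $q$-th power of the potential normalized to equal $1$ on $\Sigma$, then using one H\"older inequality and the Sobolev inequality. That competitor is not compactly supported, so a cutoff at infinity is needed.

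You instead apply coarea directly to $u_p$ on $\{u_p<t_0\}$, with $t_0<1$. The sets $\{u_p<t\}$, $t\le t_0$, are bounded (since $u_p\to1$ at infinity) and contain a collar of $\Sigma$, so they are directly admissible for outer-minimality. Chebyshev and Sobolev applied to $1-u_p$ bound $\mathrm{Vol}\{u_p<t_0\}$ uniformly in $p$. H\"older (or your Young form) then gives $t_0|\Sigma|_g\le\operatorname{Cap}_p^{1/p}\,\mathrm{Vol}\{u_p<t_0\}^{(p-1)/p}$. Hence $\liminf_{p\to1^+}\operatorname{Cap}_p\ge t_0|\Sigma|_g$, and you let $t_0\to1$.

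What each approach buys:
\begin{itemize}
\item Your route is more elementary and avoids both $\operatorname{Cap}_1$ and the cutoff at infinity.
\item The paper's power $q$ is essentially an optimal weighting of the same level sets, packaged into one inequality valid at each fixed $p$. Optimizing your $t_0$ reproduces it up to a factor $(q/(q-1))^{q-1}\to1$.
\item Both routes need exactly the same uniformity of the Sobolev constant, namely $C_S(p)^{3(p-1)/(3-p)}\to1$.
\end{itemize}

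Finally, delete your step (3). No limit set is needed, because step (4) follows from (1)--(2) applied to each bounded set $\{u_p<t\}$. Its justification (``finite volume, so $E$ is bounded'') is also not valid as stated. The compactness obstacle you anticipate is therefore not actually there.
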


\begin{proof}
We divide the proof into the lower and upper bounds.

\medskip

\noindent
\emph{Step 1: The lower bound.}
Define the relative $1$-capacity by
\[
\operatorname{Cap}_1(\Sigma,M')
:=
\inf
\left\{
\int_{M'}|\nabla \varphi|_g\,d\mu_g:
\varphi\in C_c^\infty(\overline{M'}),\
\varphi\geq 1 \ \text{in a neighborhood of } \Sigma
\right\}.
\]
We first claim that
\[
\operatorname{Cap}_1(\Sigma,M')\geq |\Sigma|_g .
\]

Indeed, let $\varphi$ be admissible for $\operatorname{Cap}_1(\Sigma,M')$.
After truncation, we may assume that
\[
0\leq \varphi\leq 1.
\]
For a.e. $t\in(0,1)$, the superlevel set
\[
E_t:=\{\varphi>t\}
\]
has finite perimeter, contains a neighborhood of $\Sigma$, and separates
$\Sigma$ from the asymptotically flat end. Its relative reduced boundary
$\partial^*E_t$ is therefore an admissible surface enclosing $\Sigma$, with
free boundary allowed along $\hat{\partial} M' $.

We use the following convention for relative perimeter. If $E\subset M'$ is
a set of locally finite perimeter, we define
\[
P(E;M')
:=
|D\chi_E|_g(\operatorname{int}M').
\]
Equivalently, when $E$ has smooth boundary, 
\[
P(E;M')
=
\mathcal H^2_g\bigl(\partial E\cap \operatorname{int}M'\bigr).
\]
Thus the portion of $\partial E$ lying on the noncompact boundary
$\hat{\partial} M' $ is not counted. This is the natural perimeter in the
free-boundary setting.

Let $\mathcal A_\Sigma$ denote the class of bounded finite-perimeter sets
$E\subset M'$ such that $E$ contains a one-sided neighborhood of $\Sigma$ and
$M'\setminus E$ contains the asymptotically flat end. We say that $\Sigma$ is
relative outer-minimizing if
\[
P(E;M')\geq |\Sigma|_g
\]
for every $E\in\mathcal A_\Sigma$.

Under the assumptions of Theorem \ref{Thm:1.6}, $\Sigma$ is relative
outer-minimizing. Indeed, suppose otherwise. Then there exists
$E_0\in\mathcal A_\Sigma$ such that
\[
P(E_0;M')<|\Sigma|_g.
\]
Consider the least-area relative enclosure of $\Sigma$, namely a minimizer of
\[
\inf\bigl\{P(E;M'):\ E\in\mathcal A_\Sigma\bigr\}.
\]
By the compactness and lower semicontinuity of sets of finite perimeter, and
by the standard regularity theory for relative perimeter minimizers in
dimension three, such a minimizer has smooth reduced boundary. Moreover, its
reduced boundary is minimal in $\operatorname{int}M'$ and satisfies the
free-boundary condition along $\hat{\partial} M' $.

Since the infimum is strictly less than $|\Sigma|_g$, this minimizing
relative boundary cannot be equal to $\Sigma$. Hence it produces a closed
minimal surface or a free-boundary minimal surface different from $\Sigma$.
This contradicts the assumption that $M'$ contains no other closed or
free-boundary minimal surfaces. Therefore $\Sigma$ must be relative
outer-minimizing.
% By the assumptions above, the standard minimizing-hull argument implies that
% $\Sigma$ is relative outer-minimizing. Hence, for a.e. $t\in(0,1)$,
% \[
% P(E_t;M')\geq |\Sigma|_g,
% \]
% where $P(E_t;M')$ denotes the relative perimeter in $M'$. 

Now let $\varphi$ be admissible for $\operatorname{Cap}_1(\Sigma,M')$.
After truncating, we may assume $0\leq \varphi\leq 1$. For a.e.
$t\in(0,1)$, the superlevel set
\[
E_t:=\{\varphi>t\}
\]
has finite perimeter. Since $\varphi=1$ in a neighborhood of $\Sigma$ and
$\varphi$ has compact support, $E_t$ contains a one-sided neighborhood of
$\Sigma$ and is bounded. Hence
\[
E_t\in \mathcal A_\Sigma.
\]
By the relative outer-minimizing property of $\Sigma$, we therefore have
\[
P(E_t;M')\geq |\Sigma|_g
\]
for a.e. $t\in(0,1)$.

% By the coarea formula,
% \[
% \int_{M'}|\nabla \varphi|_g\,d\mu_g
% =
% \int_0^1 P(E_t;M')\,dt.
% \]
Let $\mathring M'=M'\setminus \partial M'$. For
$\varphi\in C_c^\infty(\overline{M'})$ and
$E_t:=\{\varphi>t\}$, the coarea formula gives
\[
\int_{M'}|\nabla \varphi|_g\,d\mu_g
=
\int_{\mathring M'}|\nabla \varphi|_g\,d\mu_g
=
\int_{-\infty}^{\infty} P(E_t;M')\,dt.
\]
In particular, if $0\leq \varphi\leq 1$, then
\[
\int_{M'}|\nabla \varphi|_g\,d\mu_g
=
\int_0^1 P(E_t;M')\,dt,
\]
where
\[
P(E_t;M')
:=
|D\chi_{E_t}|_g(\mathring M')
=
\mathcal H^2_g(\partial^*E_t\cap \mathring M')
\]
is the relative perimeter of $E_t$ in $M'$. In particular, the portion of
$\partial^*E_t$ lying on the noncompact boundary $\hat{\partial} M' $ is
not counted.

Since $E_t$ contains a one-sided neighborhood of $\Sigma$ and separates
$\Sigma$ from the asymptotically flat end, $E_t$ is an admissible relative
enclosing set for $\Sigma$. By the relative outer-minimizing property of
$\Sigma$,
\[
P(E_t;M')\geq |\Sigma|_g
\]
for a.e. $t\in(0,1)$.

Using the preceding inequality, we obtain
\[
\int_{M'}|\nabla \varphi|_g\,d\mu_g
\geq
\int_0^1 |\Sigma|_g\,dt
=
|\Sigma|_g.
\]
Taking the infimum over all admissible $\varphi$ gives
\[
\operatorname{Cap}_1(\Sigma,M')
\geq
|\Sigma|_g.
\]

We next compare $\operatorname{Cap}_1(\Sigma,M')$ with the limit of
$\operatorname{Cap}_p(\Sigma,M')$ as $p\to1^+$. Let $u_p$ be the
$p$-capacitary potential. By truncation, we may assume
\[
0\leq u_p\leq 1.
\]
Moreover,
\[
\operatorname{Cap}_p(\Sigma,M')
=
\int_{M'}|\nabla u_p|_g^p\,d\mu_g.
\]

By Corollary \ref{cor:SobolevInequality}, for $p>1$ sufficiently close to
$1$,
\[
\left(
\int_{M'} |f|^{\frac{3p}{3-p}}\,d\mu_g
\right)^{\frac{3-p}{3p}}
\leq
C_S(p)
\left(
\int_{M'}|\nabla f|_g^p\,d\mu_g
\right)^{1/p},
\]
where the constants $C_S(p)$ are uniformly bounded for $p\in(1,p_0]$.

Set
\[
q=q(p):=\frac{2p}{3-p}.
\]
Then
\[
q\to1
\qquad\text{as }p\to1^+,
\]
and
\[
\frac{(q-1)p}{p-1}
=
\frac{3p}{3-p}.
\]
Using $u_p^q$ as a competitor for $\operatorname{Cap}_1(\Sigma,M')$
or, equivalently, arguing by a standard cutoff approximation at infinity, we
obtain
\[
\operatorname{Cap}_1(\Sigma,M')
\leq
\int_{M'}|\nabla(u_p^q)|_g\,d\mu_g.
\]
By H\"older's inequality,
\[
\int_{M'}|\nabla(u_p^q)|_g\,d\mu_g
=
q\int_{M'}u_p^{q-1}|\nabla u_p|_g\,d\mu_g
\]
\[
\leq
q
\left(
\int_{M'}u_p^{\frac{(q-1)p}{p-1}}\,d\mu_g
\right)^{\frac{p-1}{p}}
\left(
\int_{M'}|\nabla u_p|_g^p\,d\mu_g
\right)^{1/p}.
\]
By the choice of $q$,
\[
\frac{(q-1)p}{p-1}=\frac{3p}{3-p}.
\]
Thus
\[
\operatorname{Cap}_1(\Sigma,M')
\leq
q
\left(
\int_{M'}u_p^{\frac{3p}{3-p}}\,d\mu_g
\right)^{\frac{p-1}{p}}
\operatorname{Cap}_p(\Sigma,M')^{1/p}.
\]
The Sobolev inequality applied to $u_p$ gives
\[
\left(
\int_{M'}u_p^{\frac{3p}{3-p}}\,d\mu_g
\right)^{\frac{3-p}{3p}}
\leq
C_S(p)
\operatorname{Cap}_p(\Sigma,M')^{1/p}.
\]
Since
\[
\frac{p-1}{p}
=
(q-1)\frac{3-p}{3p},
\]
we get
\[
\left(
\int_{M'}u_p^{\frac{3p}{3-p}}\,d\mu_g
\right)^{\frac{p-1}{p}}
\leq
C_S(p)^{q-1}
\operatorname{Cap}_p(\Sigma,M')^{\frac{q-1}{p}}.
\]
Therefore,
\[
\operatorname{Cap}_1(\Sigma,M')
\leq
q\,C_S(p)^{q-1}
\operatorname{Cap}_p(\Sigma,M')^{q/p}.
\]
Since
\[
q\to1,
\qquad
\frac{q}{p}=\frac{2}{3-p}\to1,
\qquad
C_S(p)^{q-1}\to1,
\]
and since the capacities are uniformly bounded above near $p=1$ by testing
against any fixed admissible cutoff function, we obtain
\[
\operatorname{Cap}_1(\Sigma,M')
\leq
\liminf_{p\to1^+}\operatorname{Cap}_p(\Sigma,M').
\]
Together with $\operatorname{Cap}_1(\Sigma,M')\geq |\Sigma|_g$, this yields
\[
\liminf_{p\to1^+}\operatorname{Cap}_p(\Sigma,M')
\geq
|\Sigma|_g .
\]

\medskip

\noindent
\emph{Step 2: The upper bound.}
We construct admissible competitors concentrating near $\Sigma$. Since
$\Sigma$ is smooth, compact, and meets $\hat{\partial} M' $ orthogonally,
the free-boundary tubular neighborhood theorem gives an $\varepsilon_0>0$
such that the distance function
\[
\rho(x):=\operatorname{dist}_g(x,\Sigma)
\]
is smooth in the one-sided collar neighborhood
\[
\mathcal U_{\varepsilon_0}
:=
\{x\in M':0\leq \rho(x)<\varepsilon_0\}.
\]
Moreover,
\[
|\nabla \rho|_g=1
\]
in $\mathcal U_{\varepsilon_0}$.

Fix $0<\varepsilon<\varepsilon_0/2$ and choose a smooth nonincreasing cutoff
function $\chi_\varepsilon:[0,\infty)\to[0,1]$ such that
\[
\chi_\varepsilon(t)=1
\quad\text{for }0\leq t\leq \varepsilon,
\]
\[
\chi_\varepsilon(t)=0
\quad\text{for }t\geq 2\varepsilon.
\]
Then
\[
\int_\varepsilon^{2\varepsilon}
|\chi_\varepsilon'(t)|\,dt
=
1.
\]
Define
\[
\eta_\varepsilon(x):=\chi_\varepsilon(\rho(x)).
\]
Then $\eta_\varepsilon$ is admissible for $\operatorname{Cap}_p(\Sigma,M')$.
Therefore,
\[
\operatorname{Cap}_p(\Sigma,M')
\leq
\int_{M'}|\nabla \eta_\varepsilon|_g^p\,d\mu_g.
\]
Since $|\nabla \rho|_g=1$ in the collar, we have
\[
|\nabla \eta_\varepsilon|_g
=
|\chi_\varepsilon'(\rho)|.
\]
By the coarea formula,
\[
\int_{M'}|\nabla \eta_\varepsilon|_g^p\,d\mu_g
=
\int_\varepsilon^{2\varepsilon}
|\chi_\varepsilon'(t)|^p |\Sigma_t|_g\,dt,
\]
where
\[
\Sigma_t:=\{\rho=t\}.
\]
Letting $p\to1^+$ while keeping $\varepsilon$ fixed gives
\[
\limsup_{p\to1^+}\operatorname{Cap}_p(\Sigma,M')
\leq
\int_\varepsilon^{2\varepsilon}
|\chi_\varepsilon'(t)|\,|\Sigma_t|_g\,dt.
\]
Since
\[
\int_\varepsilon^{2\varepsilon}
|\chi_\varepsilon'(t)|\,dt
=
1,
\]
we obtain
\[
\limsup_{p\to1^+}\operatorname{Cap}_p(\Sigma,M')
\leq
\sup_{t\in(\varepsilon,2\varepsilon)}|\Sigma_t|_g.
\]
Finally, the parallel free-boundary surfaces $\Sigma_t$ converge smoothly to
$\Sigma$ as $t\to0^+$. Hence
\[
|\Sigma_t|_g\to|\Sigma|_g,
\]
and letting $\varepsilon\to0^+$ yields
\[
\limsup_{p\to1^+}\operatorname{Cap}_p(\Sigma,M')
\leq
|\Sigma|_g .
\]

Combining the lower and upper bounds, we conclude that
\[
\lim_{p\to1^+}\operatorname{Cap}_p(\Sigma,M')
=
|\Sigma|_g .
\]
The final identity involving the $p$-capacitary potential $u_p$ follows from
the variational characterization of $u_p$.
\end{proof}

\noindent{\it Proof of Theorem \ref{Thm:1.6}.}
On the one hand, we have
\[\lim_{p\to1}I_a(1)^{\frac1a}=\frac14\] and 
\[\lim_{p\to1^+}\mathfrak{c}_p^{\frac1a}=\sqrt{\frac{{\rm Cap}_1(\Sigma,M')}{2\pi}}\]

On the other hand, since $\Sigma$ is a unique  free boundary minimal surface and $M'$ contains no other closed or free boundary minimal surface, by using Theorem \ref{thm:capacity-limit}, we obtain that $$\lim_{p\to1}{\rm Cap}(\Sigma,M')=|\Sigma|.$$

Thus, it is easy to see that 
$$\mathfrak{m}_{ABL}\geq\sqrt{\frac{|\Sigma|}{32\pi}}.$$ 

This completes the proof of Theorem \ref{Thm:1.6}.

\vskip 2mm
{\bf Acknowledgement.} J. Yin would like to thank Professor Li Chen  for his helpful discussions. C. Xia is supported by NSFC (Grant No. 12271449, 12526203, 12526102) and the Natural Science Foundation of Fujian Province of China (Grant No. 2024J011008).
J. Yin is supported by the NSFC (Grant No. 12201138) and Mathematics Tianyuan fund project (Grant No. 12226350) and  NSF of Henan Province (Grant No. 262300421869).

\printbibliography

@article {Agostiniani--Mantegazza--Mazzieri--Oronzio--2023,
    AUTHOR = {Agostiniani, Virginia and Mantegazza, Carlo and Mazzieri,
              Lorenzo and Oronzio, Francesca},
     TITLE = {A new proof of the {R}iemannian {P}enrose inequality},
   JOURNAL = {Atti Accad. Naz. Lincei Rend. Lincei Mat. Appl.},
  FJOURNAL = {Atti della Accademia Nazionale dei Lincei. Rendiconti Lincei.
              Matematica e Applicazioni},
    VOLUME = {34},
      YEAR = {2023},
    NUMBER = {3},
     PAGES = {715--726},
      ISSN = {1120-6330,1720-0768},
   MRCLASS = {53C21 (31C12 31C15 53Z05)},
  MRNUMBER = {4681223},
MRREVIEWER = {Changwei\ Xiong},
       DOI = {10.4171/rlm/1024},
       URL = {https://doi.org/10.4171/rlm/1024},
}

@article {Agostiniani--Mazzieri--Oronzio--2022,
    AUTHOR = {Agostiniani, Virginia and Mazzieri, Lorenzo and Oronzio,
              Francesca},
     TITLE = {A geometric capacitary inequality for sub-static manifolds
              with harmonic potentials},
   JOURNAL = {Math. Eng.},
  FJOURNAL = {Mathematics in Engineering},
    VOLUME = {4},
      YEAR = {2022},
    NUMBER = {2},
     PAGES = {Paper No. 013, 40},
      ISSN = {2640-3501},
   MRCLASS = {53C20 (31C12 35C20 53C24 58J05)},
  MRNUMBER = {4281178},
MRREVIEWER = {Stefano\ Borghini},
       DOI = {10.3934/mine.2022013},
       URL = {https://doi.org/10.3934/mine.2022013},
}

@article {Almaraz--Barbosa--deLima--2016,
    AUTHOR = {Almaraz, S\'ergio and Barbosa, Ezequiel and de Lima, Levi
              Lopes},
     TITLE = {A positive mass theorem for asymptotically flat manifolds with
              a non-compact boundary},
   JOURNAL = {Comm. Anal. Geom.},
  FJOURNAL = {Communications in Analysis and Geometry},
    VOLUME = {24},
      YEAR = {2016},
    NUMBER = {4},
     PAGES = {673--715},
      ISSN = {1019-8385,1944-9992},
   MRCLASS = {53C20 (53C21)},
  MRNUMBER = {3570413},
MRREVIEWER = {Mohameden\ Ahmedou},
       DOI = {10.4310/CAG.2016.v24.n4.a1},
       URL = {https://doi.org/10.4310/CAG.2016.v24.n4.a1},
}

@article {Almaraz--deLima--2023,
    AUTHOR = {Almaraz, S\'ergio and de Lima, Levi Lopes},
     TITLE = {Mass, center of mass and isoperimetry in asymptotically flat
              3-manifolds},
   JOURNAL = {Calc. Var. Partial Differential Equations},
  FJOURNAL = {Calculus of Variations and Partial Differential Equations},
    VOLUME = {62},
      YEAR = {2023},
    NUMBER = {7},
     PAGES = {Paper No. 196, 41},
      ISSN = {0944-2669,1432-0835},
   MRCLASS = {53C21 (53A10 83C05)},
  MRNUMBER = {4616637},
MRREVIEWER = {Yuguang\ Shi},
       DOI = {10.1007/s00526-023-02519-1},
       URL = {https://doi.org/10.1007/s00526-023-02519-1},
}

@article {Benatti--Fogagnolo--Mazzieri--2024--MathAnn,
    AUTHOR = {Benatti, Luca and Fogagnolo, Mattia and Mazzieri, Lorenzo},
     TITLE = {The asymptotic behaviour of {$p$}-capacitary potentials in
              asymptotically conical manifolds},
   JOURNAL = {Math. Ann.},
  FJOURNAL = {Mathematische Annalen},
    VOLUME = {388},
      YEAR = {2024},
    NUMBER = {1},
     PAGES = {99--139},
      ISSN = {0025-5831,1432-1807},
   MRCLASS = {58K55 (31C12 53E10)},
  MRNUMBER = {4693930},
MRREVIEWER = {Nicola\ Gigli},
       DOI = {10.1007/s00208-022-02515-4},
       URL = {https://doi.org/10.1007/s00208-022-02515-4},
}

@article {Bray--2001,
    AUTHOR = {Bray, Hubert L.},
     TITLE = {Proof of the {R}iemannian {P}enrose inequality using the
              positive mass theorem},
   JOURNAL = {J. Differential Geom.},
  FJOURNAL = {Journal of Differential Geometry},
    VOLUME = {59},
      YEAR = {2001},
    NUMBER = {2},
     PAGES = {177--267},
      ISSN = {0022-040X,1945-743X},
   MRCLASS = {53C21 (53C44 83C57 83C75)},
  MRNUMBER = {1908823},
MRREVIEWER = {John\ Urbas},
       URL = {http://projecteuclid.org/euclid.jdg/1090349428},
}

@article {Bray--Kazaras--Khuri--Stern--2022,
    AUTHOR = {Bray, Hubert L. and Kazaras, Demetre P. and Khuri, Marcus A.
              and Stern, Daniel L.},
     TITLE = {Harmonic functions and the mass of 3-dimensional
              asymptotically flat {R}iemannian manifolds},
   JOURNAL = {J. Geom. Anal.},
  FJOURNAL = {Journal of Geometric Analysis},
    VOLUME = {32},
      YEAR = {2022},
    NUMBER = {6},
     PAGES = {Paper No. 184, 29},
      ISSN = {1050-6926,1559-002X},
   MRCLASS = {53C20 (53C21 58J90 58Z05 83C40 83C57)},
  MRNUMBER = {4411747},
MRREVIEWER = {Alfonso\ Garc\'ia-Parrado},
       DOI = {10.1007/s12220-022-00924-0},
       URL = {https://doi.org/10.1007/s12220-022-00924-0},
}

@article {Bray--Lee--2009,
    AUTHOR = {Bray, Hubert L. and Lee, Dan A.},
     TITLE = {On the {R}iemannian {P}enrose inequality in dimensions less
              than eight},
   JOURNAL = {Duke Math. J.},
  FJOURNAL = {Duke Mathematical Journal},
    VOLUME = {148},
      YEAR = {2009},
    NUMBER = {1},
     PAGES = {81--106},
      ISSN = {0012-7094,1547-7398},
   MRCLASS = {53C21 (53C44 53C80)},
  MRNUMBER = {2515101},
MRREVIEWER = {Gabjin\ Yun},
       DOI = {10.1215/00127094-2009-020},
       URL = {https://doi.org/10.1215/00127094-2009-020},
}

@article {Bray--Miao--2008,
    AUTHOR = {Bray, Hubert and Miao, Pengzi},
     TITLE = {On the capacity of surfaces in manifolds with nonnegative
              scalar curvature},
   JOURNAL = {Invent. Math.},
  FJOURNAL = {Inventiones Mathematicae},
    VOLUME = {172},
      YEAR = {2008},
    NUMBER = {3},
     PAGES = {459--475},
      ISSN = {0020-9910,1432-1297},
   MRCLASS = {53C21 (53C20)},
  MRNUMBER = {2393076},
MRREVIEWER = {Gabjin\ Yun},
       DOI = {10.1007/s00222-007-0102-x},
       URL = {https://doi.org/10.1007/s00222-007-0102-x},
}

@article {Chan--Chu--Lee--Tsang--2024,
    AUTHOR = {Chan, Pak-Yeung and Chu, Jianchun and Lee, Man-Chun and Tsang,
              Tin-Yau},
     TITLE = {Monotonicity of the {$p$}-{G}reen functions},
   JOURNAL = {Int. Math. Res. Not. IMRN},
  FJOURNAL = {International Mathematics Research Notices. IMRN},
      YEAR = {2024},
    NUMBER = {9},
     PAGES = {7998--8025},
      ISSN = {1073-7928,1687-0247},
   MRCLASS = {53C21},
  MRNUMBER = {4742853},
MRREVIEWER = {Mattia\ Fogagnolo},
       DOI = {10.1093/imrn/rnae030},
       URL = {https://doi.org/10.1093/imrn/rnae030},
}

@article {Chodosh--Li--2024,
    AUTHOR = {Chodosh, Otis and Li, Chao},
     TITLE = {Stable minimal hypersurfaces in {${\bf R}^4$}},
   JOURNAL = {Acta Math.},
  FJOURNAL = {Acta Mathematica},
    VOLUME = {233},
      YEAR = {2024},
    NUMBER = {1},
     PAGES = {1--31},
      ISSN = {0001-5962,1871-2509},
   MRCLASS = {53A10 (53C42)},
  MRNUMBER = {4816633},
MRREVIEWER = {Jinyu\ Guo},
       DOI = {10.4310/acta.2024.v233.n1.a1},
       URL = {https://doi.org/10.4310/acta.2024.v233.n1.a1},
}

@article {Chrusciel--1990,
    AUTHOR = {Chru\'sciel, Piotr T.},
     TITLE = {Asymptotic estimates in weighted {H}\"older spaces for a class
              of elliptic scale-covariant second order operators},
   JOURNAL = {Ann. Fac. Sci. Toulouse Math. (5)},
  FJOURNAL = {Toulouse. Facult\'e{} des Sciences. Annales. Math\'ematiques.
              S\'erie 5},
    VOLUME = {11},
      YEAR = {1990},
    NUMBER = {1},
     PAGES = {21--37},
      ISSN = {0240-2955},
   MRCLASS = {35B45 (35J15 35P20 47F05)},
  MRNUMBER = {1191470},
       URL = {http://www.numdam.org/item?id=AFST_1990_5_11_1_21_0},
}

@article {Ciraolo--Figalli--Roncoroni--2020,
    AUTHOR = {Ciraolo, Giulio and Figalli, Alessio and Roncoroni, Alberto},
     TITLE = {Symmetry results for critical anisotropic {$p$}-{L}aplacian
              equations in convex cones},
   JOURNAL = {Geom. Funct. Anal.},
  FJOURNAL = {Geometric and Functional Analysis},
    VOLUME = {30},
      YEAR = {2020},
    NUMBER = {3},
     PAGES = {770--803},
      ISSN = {1016-443X,1420-8970},
   MRCLASS = {35J92 (35B06 35B33)},
  MRNUMBER = {4135671},
MRREVIEWER = {Ky\ Ho},
       DOI = {10.1007/s00039-020-00535-3},
       URL = {https://doi.org/10.1007/s00039-020-00535-3},
}

@article {Cordero--Nazaret--Villani--2004,
    AUTHOR = {Cordero-Erausquin, D. and Nazaret, B. and Villani, C.},
     TITLE = {A mass-transportation approach to sharp {S}obolev and
              {G}agliardo-{N}irenberg inequalities},
   JOURNAL = {Adv. Math.},
  FJOURNAL = {Advances in Mathematics},
    VOLUME = {182},
      YEAR = {2004},
    NUMBER = {2},
     PAGES = {307--332},
      ISSN = {0001-8708,1090-2082},
   MRCLASS = {26D15 (46E35)},
  MRNUMBER = {2032031},
MRREVIEWER = {Olivier\ Druet},
       DOI = {10.1016/S0001-8708(03)00080-X},
       URL = {https://doi.org/10.1016/S0001-8708(03)00080-X},
}

@article{DiBenedetto--1983--NATMA,
title = {C1 + α local regularity of weak solutions of degenerate elliptic equations},
journal = {Nonlinear Analysis: Theory, Methods \& Applications},
volume = {7},
number = {8},
pages = {827-850},
year = {1983},
issn = {0362-546X},
doi = {https://doi.org/10.1016/0362-546X(83)90061-5},
url = {https://www.sciencedirect.com/science/article/pii/0362546X83900615},
author = {E. DiBenedetto}
}

@article{DiBenedetio--1983--PAPP,
  title={Interior and boundary regularity for a class of free boundary problems},
  author={BENEDETIO, E Dl},
  journal={Free boundary problems: theory and applications},
  volume={78},
  pages={383},
  year={1983},
  publisher={Pitman Advanced Pub. Program}
}

@article {Eichmair--Koerber--2023,
    AUTHOR = {Eichmair, Michael and Koerber, Thomas},
     TITLE = {Doubling of asymptotically flat half-spaces and the
              {R}iemannian {P}enrose inequality},
   JOURNAL = {Comm. Math. Phys.},
  FJOURNAL = {Communications in Mathematical Physics},
    VOLUME = {400},
      YEAR = {2023},
    NUMBER = {3},
     PAGES = {1823--1860},
      ISSN = {0010-3616,1432-0916},
   MRCLASS = {53C21 (53C20)},
  MRNUMBER = {4595610},
MRREVIEWER = {Hans-Bert\ Rademacher},
       DOI = {10.1007/s00220-023-04635-7},
       URL = {https://doi.org/10.1007/s00220-023-04635-7},
}

@article {Hirsch--Miao--Tam--2024,
    AUTHOR = {Hirsch, Sven and Miao, Pengzi and Tam, Luen-Fai},
     TITLE = {Monotone quantities of {$p$}-harmonic functions and their
              applications},
   JOURNAL = {Pure Appl. Math. Q.},
  FJOURNAL = {Pure and Applied Mathematics Quarterly},
    VOLUME = {20},
      YEAR = {2024},
    NUMBER = {2},
     PAGES = {599--644},
      ISSN = {1558-8599,1558-8602},
   MRCLASS = {53C20},
  MRNUMBER = {4734880},
MRREVIEWER = {Jui-Tang\ Chen},
       DOI = {10.4310/pamq.2024.v20.n2.a1},
       URL = {https://doi.org/10.4310/pamq.2024.v20.n2.a1},
}

@article {Huisken--Ilmanen--2001,
    AUTHOR = {Huisken, Gerhard and Ilmanen, Tom},
     TITLE = {The inverse mean curvature flow and the {R}iemannian {P}enrose
              inequality},
   JOURNAL = {J. Differential Geom.},
  FJOURNAL = {Journal of Differential Geometry},
    VOLUME = {59},
      YEAR = {2001},
    NUMBER = {3},
     PAGES = {353--437},
      ISSN = {0022-040X,1945-743X},
   MRCLASS = {53C44 (35D05 35J20 35J60 53C21 53C42 83C57)},
  MRNUMBER = {1916951},
MRREVIEWER = {John\ Urbas},
       URL = {http://projecteuclid.org/euclid.jdg/1090349447},
}

@article {Jia--Wang--Xia--Zhang--2024,
    AUTHOR = {Jia, Xiaohan and Wang, Guofang and Xia, Chao and Zhang, Xuwen},
     TITLE = {Heintze-{K}archer inequality for anisotropic free boundary
              hypersurfaces in convex domains},
   JOURNAL = {J. Math. Study},
  FJOURNAL = {Journal of Mathematical Study. Shuxue Yanjiu},
    VOLUME = {57},
      YEAR = {2024},
    NUMBER = {3},
     PAGES = {243--258},
      ISSN = {2096-9856,2617-8702},
   MRCLASS = {53C42 (53A10 53C45)},
  MRNUMBER = {4817296},
MRREVIEWER = {Jinyu\ Guo},
}

@article {Koerber--2023,
    AUTHOR = {Koerber, Thomas},
     TITLE = {The {R}iemannian {P}enrose inequality for asymptotically flat
              manifolds with non-compact boundary},
   JOURNAL = {J. Differential Geom.},
  FJOURNAL = {Journal of Differential Geometry},
    VOLUME = {124},
      YEAR = {2023},
    NUMBER = {2},
     PAGES = {317--379},
      ISSN = {0022-040X,1945-743X},
   MRCLASS = {53C20 (53C80)},
  MRNUMBER = {4602727},
MRREVIEWER = {Dan\ A.\ Lee},
       DOI = {10.4310/jdg/1686931603},
       URL = {https://doi.org/10.4310/jdg/1686931603},
}

@article {Lieberman--1988,
    AUTHOR = {Lieberman, Gary M.},
     TITLE = {Boundary regularity for solutions of degenerate elliptic
              equations},
   JOURNAL = {Nonlinear Anal.},
  FJOURNAL = {Nonlinear Analysis. Theory, Methods \& Applications. An
              International Multidisciplinary Journal},
    VOLUME = {12},
      YEAR = {1988},
    NUMBER = {11},
     PAGES = {1203--1219},
      ISSN = {0362-546X,1873-5215},
   MRCLASS = {35J70 (35B65)},
  MRNUMBER = {969499},
MRREVIEWER = {Zuchi\ Chen},
       DOI = {10.1016/0362-546X(88)90053-3},
       URL = {https://doi.org/10.1016/0362-546X(88)90053-3},
}

@article {Ma--Yang--Yin--2025,
    AUTHOR = {Ma, Hui and Yang, Mingxuan and Yin, Jiabin},
     TITLE = {A partially overdetermined problem for the {$p$}-{L}aplace
              equation in a convex cone},
   JOURNAL = {Nonlinear Anal.},
  FJOURNAL = {Nonlinear Analysis. Theory, Methods \& Applications. An
              International Multidisciplinary Journal},
    VOLUME = {253},
      YEAR = {2025},
     PAGES = {Paper No. 113743, 17},
      ISSN = {0362-546X,1873-5215},
   MRCLASS = {35N25 (31B15 35A23 35J92 53C24)},
  MRNUMBER = {4845502},
       DOI = {10.1016/j.na.2024.113743},
       URL = {https://doi.org/10.1016/j.na.2024.113743},
}

@article {Mantoulidis--Miao--Tam--2020,
    AUTHOR = {Mantoulidis, Christos and Miao, Pengzi and Tam, Luen-Fai},
     TITLE = {Capacity, quasi-local mass, and singular fill-ins},
   JOURNAL = {J. Reine Angew. Math.},
  FJOURNAL = {Journal f\"ur die Reine und Angewandte Mathematik. [Crelle's
              Journal]},
    VOLUME = {768},
      YEAR = {2020},
     PAGES = {55--92},
      ISSN = {0075-4102,1435-5345},
   MRCLASS = {53C20 (83C40 83C57)},
  MRNUMBER = {4168687},
MRREVIEWER = {Xiaodong\ Wang},
       DOI = {10.1515/crelle-2019-0040},
       URL = {https://doi.org/10.1515/crelle-2019-0040},
}

@article {Marquardt--2017,
    AUTHOR = {Marquardt, Thomas},
     TITLE = {Weak solutions of inverse mean curvature flow for
              hypersurfaces with boundary},
   JOURNAL = {J. Reine Angew. Math.},
  FJOURNAL = {Journal f\"ur die Reine und Angewandte Mathematik. [Crelle's
              Journal]},
    VOLUME = {728},
      YEAR = {2017},
     PAGES = {237--261},
      ISSN = {0075-4102,1435-5345},
   MRCLASS = {53C44 (35J25 35J62)},
  MRNUMBER = {3668996},
MRREVIEWER = {Julian\ Scheuer},
       DOI = {10.1515/crelle-2014-0116},
       URL = {https://doi.org/10.1515/crelle-2014-0116},
}

@article {Mazurowski--Yao--2025,
    AUTHOR = {Mazurowski, Liam and Yao, Xuan},
     TITLE = {Monotone quantities for {$p$}-harmonic functions and the sharp
              {$p$}-{P}enrose inequality},
   JOURNAL = {Math. Res. Lett.},
  FJOURNAL = {Mathematical Research Letters},
    VOLUME = {32},
      YEAR = {2025},
    NUMBER = {3},
     PAGES = {957--995},
      ISSN = {1073-2780,1945-001X},
   MRCLASS = {53C21 (58J05 83C40)},
  MRNUMBER = {4945107},
MRREVIEWER = {Kai\ Xu},
       DOI = {10.4310/mrl.250731120814},
       URL = {https://doi.org/10.4310/mrl.250731120814},
}

@article {Mazurowski--Yao--2026,
    AUTHOR = {Mazurowski, Liam and Yao, Xuan},
     TITLE = {Mass, conformal capacity, and the volumetric {P}enrose
              inequality},
   JOURNAL = {Comm. Anal. Geom.},
  FJOURNAL = {Communications in Analysis and Geometry},
    VOLUME = {33},
      YEAR = {2026},
    NUMBER = {10},
     PAGES = {2355--2394},
      ISSN = {1019-8385,1944-9992},
   MRCLASS = {53C20 (53C21)},
  MRNUMBER = {5063961},
       DOI = {10.4310/cag.260419225838},
       URL = {https://doi.org/10.4310/cag.260419225838},
}

@article {Miao--2025,
    AUTHOR = {Miao, Pengzi},
     TITLE = {Mass, capacitary functions, and the mass-to-capacity ratio},
   JOURNAL = {Peking Math. J.},
  FJOURNAL = {Peking Mathematical Journal},
    VOLUME = {8},
      YEAR = {2025},
    NUMBER = {2},
     PAGES = {351--404},
      ISSN = {2096-6075,2524-7182},
   MRCLASS = {53C21 (53C20 53C80 83C40)},
  MRNUMBER = {4910987},
       DOI = {10.1007/s42543-023-00071-7},
       URL = {https://doi.org/10.1007/s42543-023-00071-7},
}

@article {Munteanu--Wang--2023,
    AUTHOR = {Munteanu, Ovidiu and Wang, Jiaping},
     TITLE = {Comparison theorems for 3{D} manifolds with scalar curvature
              bound},
   JOURNAL = {Int. Math. Res. Not. IMRN},
  FJOURNAL = {International Mathematics Research Notices. IMRN},
      YEAR = {2023},
    NUMBER = {3},
     PAGES = {2215--2242},
      ISSN = {1073-7928,1687-0247},
   MRCLASS = {53C21},
  MRNUMBER = {4565611},
MRREVIEWER = {Xiaodong\ Wang},
       DOI = {10.1093/imrn/rnab307},
       URL = {https://doi.org/10.1093/imrn/rnab307},
}

@article {Oronzio--2025,
    AUTHOR = {Oronzio, Francesca},
     TITLE = {A{DM} mass, area and capacity in asymptotically flat
              3-manifolds with nonnegative scalar curvature},
   JOURNAL = {Commun. Contemp. Math.},
  FJOURNAL = {Communications in Contemporary Mathematics},
    VOLUME = {27},
      YEAR = {2025},
    NUMBER = {9},
     PAGES = {Paper No. 2550011, 35},
      ISSN = {0219-1997,1793-6683},
   MRCLASS = {53C21 (31C12 53C20)},
  MRNUMBER = {4942308},
MRREVIEWER = {Huaiyu\ Zhang},
       DOI = {10.1142/S0219199725500117},
       URL = {https://doi.org/10.1142/S0219199725500117},
}

@article {Pigola--Setti--Troyanov--2014,
    AUTHOR = {Pigola, Stefano and Setti, Alberto G. and Troyanov, Marc},
     TITLE = {The connectivity at infinity of a manifold and
              {$L^{q,p}$}-{S}obolev inequalities},
   JOURNAL = {Expo. Math.},
  FJOURNAL = {Expositiones Mathematicae},
    VOLUME = {32},
      YEAR = {2014},
    NUMBER = {4},
     PAGES = {365--383},
      ISSN = {0723-0869,1878-0792},
   MRCLASS = {53C21 (31C12)},
  MRNUMBER = {3279484},
MRREVIEWER = {Qihua\ Ruan},
       DOI = {10.1016/j.exmath.2013.12.006},
       URL = {https://doi.org/10.1016/j.exmath.2013.12.006},
}

@book {Salsa--2016,
    AUTHOR = {Salsa, Sandro},
     TITLE = {Partial differential equations in action},
    SERIES = {Unitext},
    VOLUME = {99},
   EDITION = {Third},
      NOTE = {From modelling to theory,
              La Matematica per il 3+2},
 PUBLISHER = {Springer, [Cham]},
      YEAR = {2016},
     PAGES = {xviii+686},
      ISBN = {978-3-319-31237-8; 978-3-319-31238-5},
   MRCLASS = {35-01 (35D30 35F16 35F61 35J20 35K20 35K57 35L04)},
  MRNUMBER = {3497072},
       DOI = {10.1007/978-3-319-31238-5},
       URL = {https://doi.org/10.1007/978-3-319-31238-5},
}

@article {Silva--2025,
    AUTHOR = {Silva, Daniel},
     TITLE = {On the capacity of surfaces in asymptotically flat half-space},
   JOURNAL = {Lett. Math. Phys.},
  FJOURNAL = {Letters in Mathematical Physics},
    VOLUME = {115},
      YEAR = {2025},
    NUMBER = {2},
     PAGES = {Paper No. 42, 19},
      ISSN = {0377-9017,1573-0530},
   MRCLASS = {53C21 (53E10 83C40)},
  MRNUMBER = {4895924},
       DOI = {10.1007/s11005-025-01928-x},
       URL = {https://doi.org/10.1007/s11005-025-01928-x},
}

@article {Stern--2022,
    AUTHOR = {Stern, Daniel L.},
     TITLE = {Scalar curvature and harmonic maps to {$S^1$}},
   JOURNAL = {J. Differential Geom.},
  FJOURNAL = {Journal of Differential Geometry},
    VOLUME = {122},
      YEAR = {2022},
    NUMBER = {2},
     PAGES = {259--269},
      ISSN = {0022-040X,1945-743X},
   MRCLASS = {58E20 (53C21 53C43)},
  MRNUMBER = {4516941},
MRREVIEWER = {Radu\ Slobodeanu},
       DOI = {10.4310/jdg/1669998185},
       URL = {https://doi.org/10.4310/jdg/1669998185},
}

@article {Munteanu--Wang--2026,
    AUTHOR = {Munteanu, Ovidiu and Wang, Jiaping},
     TITLE = {Geometry of three-dimensional manifolds with positive scalar
              curvature},
   JOURNAL = {Amer. J. Math.},
  FJOURNAL = {American Journal of Mathematics},
    VOLUME = {148},
      YEAR = {2026},
    NUMBER = {1},
     PAGES = {131--160},
      ISSN = {0002-9327,1080-6377},
   MRCLASS = {53C21},
  MRNUMBER = {5025949},
       DOI = {10.1353/ajm.2026.a980770},
       URL = {https://doi.org/10.1353/ajm.2026.a980770},
}

@article {Xia--Yin--Zhou--2024,
    AUTHOR = {Xia, Chao and Yin, Jiabin and Zhou, Xingjian},
     TITLE = {New monotonicity for {$p$}-capacitary functions in 3-manifolds
              with nonnegative scalar curvature},
   JOURNAL = {Adv. Math.},
  FJOURNAL = {Advances in Mathematics},
    VOLUME = {440},
      YEAR = {2024},
     PAGES = {Paper No. 109526, 40},
      ISSN = {0001-8708,1090-2082},
   MRCLASS = {53C20 (53C21)},
  MRNUMBER = {4708141},
MRREVIEWER = {Maria\ Andrade},
       DOI = {10.1016/j.aim.2024.109526},
       URL = {https://doi.org/10.1016/j.aim.2024.109526},
}

@article{yin2026sharp,
  title={Sharp upper bounds of p-capacity in convex cone and asymptotically flat half-space},
  author={Yin, Jiabin and Zhou, Xingjian},
  journal={Letters in Mathematical Physics},
  volume={116},
  number={2},
  pages={34},
  year={2026},
  publisher={Springer}
}

\end{document}